\definecolor{MyPink}{rgb}{0.75686,0.11765,0.95686}
\DeclareMathOperator{\supp}{supp}
\newtheorem{lem}{Lemma}
\newtheorem{theorem}{Theorem}
\newtheorem{remark}{Remark}
\providecommand{\BigO}[1]{\,\mathcal{O}#1}
\providecommand{\D}[1]{\,\mathrm{d}#1}
\def\veps{v^\varepsilon}
\begin{document}

\title{Analysis of {HMM} for One Dimensional Wave Propagation Problems Over Long Time}
\author{Bj\"orn Engquist\thanks{Department of Mathematics and Institute for Computational Engineering and Sciences, The University of Texas at Austin, 1 University Station C1200, Austin TX 78712, U.S.A (engquist@ices.utexas.edu)}
\and Henrik Holst\thanks{Department of Numerical Analysis, CSC, KTH, 100 44 Stockholm, Sweden (holst@kth.se)}
\and Olof Runborg\thanks{Department of Numerical Analysis, CSC, KTH, 100 44 Stockholm, Sweden (olofr@nada.kth.se) and Swedish e-Science Research Center (SeRC)}
}


\maketitle

\begin{abstract}{}
Multiscale problems are computationally costly to solve by direct simulation because the smallest scales must be represented over a domain determined by the largest scales of the problem.  We have developed and analyzed new numerical methods for multiscale wave propagation following the framework of the heterogeneous multiscale method.  The numerical methods couple simulations on macro- and microscales for problems with rapidly fluctuating material coefficients. The computational complexity of the new method is significantly lower than that of traditional techniques. We focus on HMM approximation applied to long time integration of one-dimensional
wave propagation problems in both periodic and non-periodic medium and show that the dispersive effect that appear after long time is fully captured.
\end{abstract}

\section{Introduction}

There are many application of the wave propagation in heterogeneous media.
Acoustics, elastic or electromagnetic wave propagation in composite material are
typical examples. In seismic exploration an acoustic wave equation must be
solved for highly fluctuating wave velocity over long time with large
disturbances. We consider a representable model problem: The scalar wave
equation in a bounded domain with initial and boundary conditions,
\begin{equation}
	\begin{cases}
		u^{\varepsilon}_{tt} - \nabla \cdot (A^{\varepsilon} \nabla u^{\varepsilon}) = 0, & \Omega \times [0,T^{\varepsilon}], \\
		u^{\varepsilon}(x,0) = f(x), \quad u^{\varepsilon}_t(x,0) = g(x), & \forall x \in \Omega, 
	\end{cases}
	\label{eq:introduction:wave}
\end{equation}
on a smooth domain $\Omega \subset \mathbb{R}^d$ where $A^{\varepsilon}(x)$ is a symmetric, uniformly positive definite matrix. For simplicity we assume that $u^{\varepsilon}$ satisfies periodic boundary conditions.  We assume that $A^{\varepsilon}$ has oscillations on a scale proportional to $\varepsilon \ll 1$.  The solution of \eqref{eq:introduction:wave} will then be a sum of two parts: one coarse scale (macroscale) part, which is independent of $\varepsilon$, and an oscillatory (microscale) part which is highly oscillating in both time and spatial directions on the scale $\varepsilon$.  These kinds of multiscale problems are typically very computationally costly to solve  by traditional numerical techniques. The smallest scale must be well represented over a domain which is determined by the largest scales.  However most often one is only interested in the coarse scale part of the solution.  The goal of our research here is to find an efficient way to compute it.  


Different frameworks for numerical multiscale methods have recently been proposed. The technique we will consider we will follow here is the heterogeneous multiscale method (HMM)~\cite{e2004,e2003,e2007,e2003b}.
These multiscale methods couple simulations on macro- and microscales and compute the coarse scale solution efficiently. In this paper we use HMM for wave propagation and we build on \cite{engquist2011} where we described a HMM multiscale method which captured the coarse scale behavior of \eqref{eq:introduction:wave} for finite time. The main aim here is to show that the same HMM methodology works also for long time, where new macroscale phenomena occurs. 

For finite time and periodic velocity coefficients converges to the solution of the homogenized equations, \cite{bensoussan1978},
\begin{equation}
	\begin{cases}
		\bar{u}_{tt} - \nabla \cdot (\bar{A} \nabla \bar{u}) = 0, & \Omega \times [0,T], \\
		\bar{u}(x,0) = f(x), \quad \bar{u}_t(x,0) = g(x), & \forall x \in \Omega,
	\end{cases}
	\label{eq:introduction:wavebar}
\end{equation}
for fixed $T$ independent of $\varepsilon$, $A^{\varepsilon}(x) = A(x,x/\varepsilon)$ and where $A(x,y)$ is periodic in $y$. The macroscale algorithm in HMM is inspired by \eqref{eq:introduction:wavebar} but it only does not use the explicit form and only uses the flux form, \cite{engquist2011},
\begin{equation}
	\begin{cases}
		u_{tt} - \nabla \cdot F = 0, & \Omega \times [0,T^{\varepsilon}], \\
		u(x,0) = f(x), \quad u_t(x,0) = g(x), & \forall x \in \Omega.
	\end{cases}
\end{equation}
The challenge is to see if the same HMM approximation is robust such that it also is applicable for very long time, $\BigO(\varepsilon^{-2})$, where $\BigO(1)$ dispersive effects appear. These effects are not given by \eqref{eq:introduction:wavebar}. The dispersive property is accurately captured if care is taken to have enough accuracy in each component of HMM. 

The numerical long time solution compares very well with analytic long time effective equation given by Santosa and Symes, \cite{santosa1991}. In the one-dimensional case, when $A^{\varepsilon}(x)=A(x/\varepsilon)$ and $A$ periodic, their long time effective equation will be of the form
\begin{equation}
	u_{tt} - \bar{A} u_{xx} - \beta \varepsilon^2 u_{xxxx} = 0,
	\label{eq:introduction:eff}
\end{equation}
where $\bar{A}$ is the same coefficient as in \eqref{eq:introduction:wavebar} and $\beta$ is a functional of $A$ which can be difficult to determine analytically and numerically. 

The rest of the paper is organized as follows. Section~\ref{section:hmm} contains a description of the numerical algorithm including the added accuracy compared to the technique given in \cite{engquist2011}. Analysis of the method is presented in Section~\ref{section:theory} and the numerical results are contained in the following section. The conclusion are in the final Section~\ref{section:conclusions}.

\section{HMM for the Wave Equation.\label{section:hmm}}

We will now briefly describe how HMM can be applied to \eqref{eq:introduction:wave}.  We assume that there exists two models, a micro model $h(u^{\varepsilon},d^{\varepsilon}) = 0$ describing the full problem, where $u^{\varepsilon}$ is the quantity of interest and $d^{\varepsilon}$ is the problem data (i.e. initial conditions, boundary conditions, \ldots), and a coarse macro model $H(u,d) = 0$, with solution $u$ and data $d$.  The micro model is accurate but is expensive to compute by traditional methods; in our case this model is \eqref{eq:introduction:wave}. The macro model gives a coarse scale solution $u$, assumed to be a good approximation of the microscale solution $u^{\varepsilon}$ and is less expensive to compute.  The model is however incomplete in some sense and requires additional data.  In our case we use $$u_{tt} - \nabla \cdot F = 0$$ with the flux $F$ unknown. This is inspired by the form of the homogenized equation \eqref{eq:introduction:wavebar}.
A key idea in the HMM is to provide the missing data in the macro model using a local solution to the micro model.  Here \eqref{eq:introduction:wave} is solved locally on a small domain with size proportional to $\varepsilon$ and $F$ is given as an average of the resulting microscopic flux $A^{\varepsilon} \nabla u^{\varepsilon}$.  The initial data and boundary conditions ($d^{\varepsilon}$) for this computation is constrained by the macroscale solution $u$. 

It should be noted that even if our numerical methods use ideas from homogenization theory they do not solve any effective (e.g. homogenization or effective medium) equation directly. The goal is to develop computational techniques that can be used when there is no fully known macroscopic PDE.

We will now describe a HMM method for the wave equation \eqref{eq:introduction:wave} which will give useful solutions in two regimes. The first regime is when $T$ is fixed and independent of $\varepsilon$. The other regime is when $T \rightarrow \infty$ as $\varepsilon \rightarrow 0$. We will call this the \emph{long time} regime and the problem itself a long time wave propagation problem.
In this paper we will consider the one dimensional wave propagation problem. Many of the results can be shown to hold in a $d$-dimensional setting. We have in previous works shown higher dimensional results, both theoretical and numerical. 
We demonstrated in the previous papers~\cite{engquist2011,engquist2011c} that our HMM captures the same solution as homogenization (when applicable). In this paper we will primarily investigate how our HMM method handles the long time problem. 
The microscopic variations in the medium introduces dispersive effects in the macroscopic behavior of the solution which becomes notable after long time. Our goal is to show that our HMM method can capture the dispersion with less computational cost than just resolving the full equation.

Let us illustrate the dispersive effects by a numerical example. We consider a one-dimensional example where we solved \eqref{eq:introduction:wave} with $\Omega=[0,1]$, $\varepsilon=0.01$ and,
\begin{equation}
	\left\{
	\begin{aligned}
		& A^{\varepsilon}(x) = A(x/\varepsilon),  \quad A(y) = 1.1 + \sin 2 \pi y, \qquad \forall x \in \Omega, \\
		& u(x,0) = e^{-100 x^2} + e^{-100 (1-x)^2}, \quad u_t(x,0) = 0, \qquad \forall x \in \Omega, \\
	\end{aligned}
	\right.
	\label{eq:ex2:problem}
\end{equation}
for a long time, $T^{\varepsilon}=\mathcal{O}(\varepsilon^{-2})$. Since we have periodic boundary conditions, the solution to the corresponding homogenized equation will be periodic in time with period $1/\sqrt{\bar{A}}=1.47722$. We will compute the solution to \eqref{eq:ex2:problem} for 1, 10 and 100 periods. We see in Fig.~\ref{fig:ex-numerical_illustration} that after 100 periods there is an $\mathcal{O}(1)$ error between the true solution $u^{\varepsilon}$ and the homogenized solution $\bar{u}$ which thus fails to capture the dispersive behavior of the solution after long time.
\begin{figure}[htb]
	\begin{center}
		\includegraphics[width=0.5\textwidth]{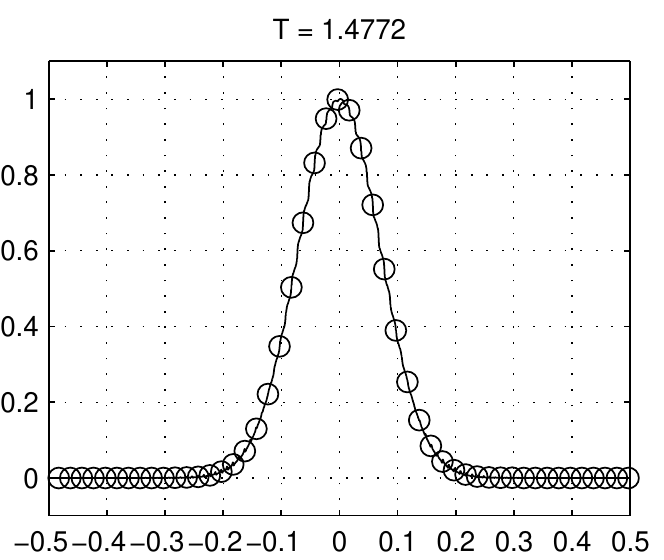}%
		\includegraphics[width=0.5\textwidth]{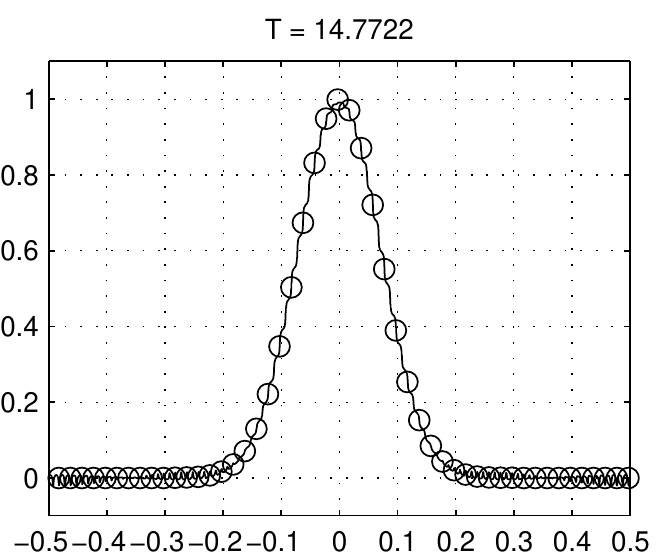}\\
		\includegraphics[width=0.5\textwidth]{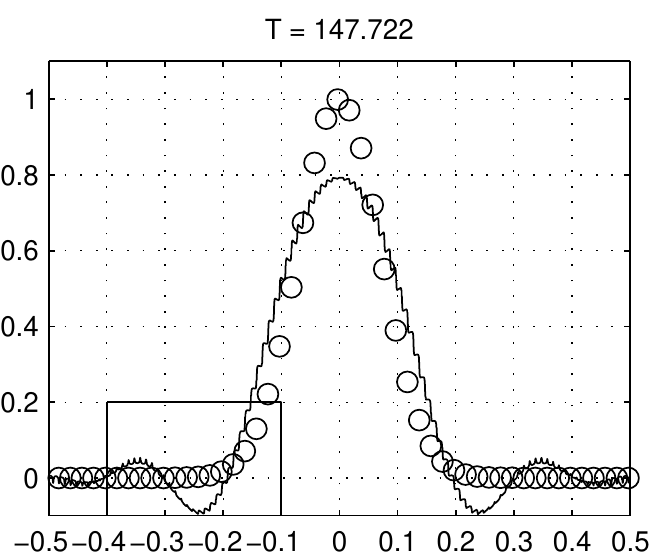}%
		\includegraphics[width=0.5\textwidth]{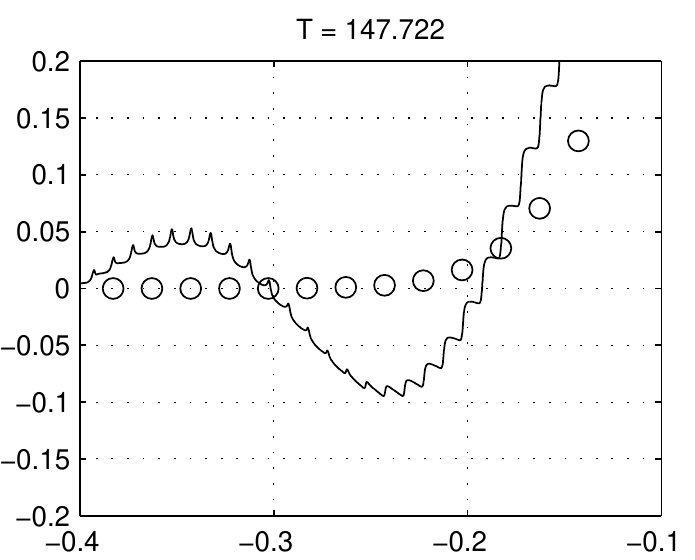}\\
	\end{center}
	\caption{Finite difference computation of \eqref{eq:ex2:problem} at $T=1.47722$, $T=14.7722$ and $T=147.722$ (1, 10 and 100 periods of the homogenized solution) and the corresponding homogenized solution (circles). As we can see the homogenized solution does not capture the dispersive effects that occur.\label{fig:ex-numerical_illustration}}
\end{figure}

\subsection{Description of the Method}

The HMM method we suggest here is described in three separate steps. 
We follow the same strategy as in \cite{abdulle2003} for parabolic equations and in \cite{samaey2006} for the one-dimensional advection equation. See \cite{engquist2007}, \cite{engquist2011} and \cite{abdulle2011} for additional details and proofs.
In step one we give the macroscopic PDE (i.e. the form $H(u,d)=0$) and a corresponding numerical discretization. In step two we describe the microscale problem (microproblem). The initial data for the microproblem is based on local macroscopic data. Finally, in step three we describe how we approximate $F$ from the computed microproblem by taking a weighted average of its solution.

We focus in this paper on the 1D version of \eqref{eq:introduction:wave},
\begin{equation}
	\left\{
	\begin{aligned}
		& u^{\varepsilon}_{tt} - \partial_x (A^{\varepsilon} u^{\varepsilon}_x) = 0, \qquad \Omega \times [0,T^{\varepsilon}], \\
		& u^{\varepsilon}(x,0) = f(x), \quad u^{\varepsilon}_t(x,0) = g(x), \qquad \forall x \in \Omega.
	\end{aligned}
	\right.
	\label{eq:hmm:wave}
\end{equation}
where $\Omega \subset \mathbb{R}$ is a interval of the form $[a,b]$ and $u^{\varepsilon}(x,t)$ is $\Omega$-periodic in $x$.

\begin{remark}
	All integrals are taken over $\mathbb{R}$ unless explicitly stated otherwise.
\end{remark}

\paragraph*{Step 1: Macro model and discretization}

We suppose there exists a macroscale PDE of the form,
\begin{equation} 
	\left\{
	\begin{aligned}
		& u_{tt} - \partial_x F = 0, \qquad Y \times [0,T^{\varepsilon}],  \\
		& u(x,0) = f(x), \quad u_t(x,0) = g(x), \qquad \forall x \in Y, \\
	\end{aligned}
	\right.
	\label{eq:hmm:macroform}
\end{equation}
where $u$ is $Y$-periodic; $F$ is a function of $x$, $u$ and higher derivatives of $u$. We will use this assumption throughout the whole paper. The assumption on \eqref{eq:hmm:macroform} is that $u \approx u^{\varepsilon}$ when $\varepsilon$ is small. In the method we suppose that $F = F(x,u,u_x,u_{xx},u_{xxx})$. In the clean homogenization case we would have $F = \bar{A} u_x$, and in the effective equation \eqref{eq:introduction:eff} $F = \bar{A} u_{x} + \beta \varepsilon^2 u_{xxx}$, but we will not assume knowledge of the exact form of a homogenized equation or any other effective equation.  

We discretize \eqref{eq:hmm:macroform} using central differences with time step $K$ and spatial grid size $H$ in all directions,
\begin{equation}
	U^{n+1}_m = 2 U^n_m - U^{n-1}_m + \frac{K^2}{24 H} \left( -F^n_{m+3/2} + 27 F^n_{m+1/2} - 27 F^n_{m-1/2} + F^n_{m-3/2} \right),
	\label{eq:hmm:macroscheme}
\end{equation}
where $F^n_{m-\frac{1}{2}}$ is $F$ evaluated at $x'=x_{m-\frac{1}{2}}$ and so forth. The scheme is second order in $K$ and fourth order in $H$. We choose to use a fourth order scheme in space since it showed to have better dispersive properties to allow us to avoid some of the numerical dispersion that pollutes the numerical solution.

We will show how $F$ is computed on the grid in the description of the micro problem. The implementation details are described in \cite{holst2011}.
	
\paragraph*{Step 2: Micro problem}

The evaluation of $F$ in each grid point is done by solving a micro problem to fill in the missing data in the macro model. Given the parameters $x'$ and an initial data $Q(x)$, we solve
\begin{equation}
	\left\{
	\begin{aligned}
		& v^{\varepsilon}_{tt} - \partial_x (A^{\varepsilon}(x+x') v^{\varepsilon}_x) = 0, \qquad Y^{\varepsilon} \times [-\tau,\tau], \\
		& v^{\varepsilon}(x,0) = Q(x+x'), \quad v^{\varepsilon}_t(x,0) = 0, \qquad \forall x \in Y^{\varepsilon}, \\
	\end{aligned}
	\right.
	\label{eq:hmm:microproblem}
\end{equation}
where $v^{\varepsilon} - v^{\varepsilon}(x,0)$ is $Y^{\varepsilon}$-periodic and $Q$ depends on the macroscopic state $\{u_m^n\}$, assuming $x'=x_{m-1/2}$, it is typically a third order polynomial interpolating $u^n_{m-2},\ldots,u^n_{m+1}$ in $x_{m-2},\ldots,x_{m+1}$, where $u^n_m \approx U(x_m,t_n)$ in \eqref{eq:hmm:macroform}. Note that we have made a change of variables to center $Y^{\varepsilon}$ around $x'$ via $x-x' \mapsto x$. We keep the sides of the micro box $Y^{\varepsilon}$ of order $\varepsilon$. We will discuss the choice of $Y^{\varepsilon}$ and $Q$ below. 

\paragraph*{Step 3: Reconstruction step}

After we have solved for $v^{\varepsilon}$ for all $Y^{\varepsilon} \times [-\tau,\tau]$ we approximate $F$ by a weighted average of the microscopic flux $A^{\varepsilon}(x+x') v^{\varepsilon}_x$ over $[-\eta,\eta] \times [-\tau,\tau]$ where $[-\eta,\eta] \subset Y^{\varepsilon}$. We choose $\eta$ sufficiently small for information not to have propagated into the region $[-\eta,\eta]$ from the boundary of the micro box $Y^{\varepsilon}$. More precisely, we consider smooth 
averaging kernels $K_\eta$ compactly supported in $[\-\eta,\eta]$, see below in 
Section~\ref{section:kernels}.
We then approximate
\begin{equation}
	F \approx F_{\rm HMM}(x',Q) =\iint K_{\tau}(t) K_{\eta}(x) f^{\varepsilon}(x,t) \D{x} \D{t},
	\label{eq:hmm:Ftildekernel}
\end{equation}
where $f^{\varepsilon}(x,t) = A^{\varepsilon}(x+x') v^{\varepsilon}_x(x,t)$ and $v^{\varepsilon}$ solves \eqref{eq:hmm:microproblem}.

\subsection{Motivation of the Method}

In this section we will give motivation to the steps and procedures of the HMM method for the wave equation, in the previous section. To do this we define the coarse scale solution $u(t,x)$ as a local average in time and space of the fine scale solution,
\be{udef}
u(x,t) = (\mathcal{K} u^{\varepsilon})(x,t),
\ee
where $\mathcal{K}$ is a \emph{local averaging operator} defined as,
\begin{equation}
	(\mathcal{K} v)(x,t) = 
	(K_{\eta} \ast (K_{\tau} \ast v))(x,t) = 
	\iint K_{\eta}(\xi) K_{\tau}(s) v(x-\xi,t-s) \D{\xi} \D{s}.
	\label{eq:hmm:mathcalK}
\end{equation}
The functions $K_\eta$ and $K_\tau$ are smooth, compactly supported, 
averaging kernels described below in Section~\ref{section:kernels}. The operator 
$\mathcal{K}$ commutes with $\partial_t$ and $\partial_x$ on smooth functions, since
\begin{multline}
	\partial_t \mathcal{K} v = 
	\partial_t \iint K_{\eta}(\xi) K_{\tau}(s) v(x-\xi,t-s) \D{\xi} \D{s} \\ =
	\iint K_{\eta}(\xi) K_{\tau}(s) v_t(x-\xi,t-s) \D{\xi} \D{s} =
	\mathcal{K} v_t.
\end{multline}
In the same way we have $\partial_x \mathcal{K} v = \mathcal{K} v_x$.

The coarse scale solution $u$ will then satisfy a PDE of the form $u_{tt} - \partial_x F = 0$, which can be seen as follows. We apply $\mathcal{K}$ to both sides of the wave equation \eqref{eq:introduction:wave},
\begin{equation}
	\mathcal{K} u^{\varepsilon}_{tt} - \mathcal{K} \partial_x (A^{\varepsilon} u^{\varepsilon}_x) = 0.
\end{equation}
By applying the commutation property on $\partial_t \mathcal{K}$ twice and once for $\partial_x \mathcal{K}$, we arrive at
\begin{equation}
	\partial_{tt} (\mathcal{K} u^{\varepsilon}) - \partial_x (\mathcal{K} (A^{\varepsilon} u^{\varepsilon}_x)) = 0,
\end{equation}
which is the PDE for $u$ that we seek,
\begin{equation}
	\left\{
	\begin{aligned}
		& u_{tt} - \partial_x F = 0, \quad F = \mathcal{K} (A^{\varepsilon} u^{\varepsilon}_x),  \\
		& u(x,0) = (\mathcal{K} u^{\varepsilon})(x,0), \quad 
		  u_t(x,0) = (\mathcal{K} u^{\varepsilon}_t)(x,0).
	\end{aligned}
	\right.
	\label{eq:motivation:MacroPDE}
\end{equation}
Note that the flux $F$ is precisely the same type of flux used in Step 3 above.
Hence, if $u^\varepsilon$ is the exact fine scale solution, the corresponding
exact coarse scale solution must satisfy this PDE. To make the HMM procedure
consistent, the same PDE is used for the micro problem. Additionally, the
initial condition should be consistent, 
\be{initconsist}
  u(x,0) = (\mathcal{K} u^{\varepsilon})(x,0).
\ee
In Step 2 of the HMM procedure we are given the desired macroscopic
initial data, i.e. $u(x,0)$. The initial condition $Q(x)$ for the microscopic
solution $u^\varepsilon$ must then be chosen such that \eq{initconsist} holds.
To leading order in $\varepsilon$ one can simply take $Q(x)=u(x,0)$,
but to have higher order accuracy $Q(x)$ must be modified. 
On the other hand, it turns out that the initial data for $u^\varepsilon_t$ does not matter.
We will discuss this {\em consistency} problem below
in \sect{Consistency}.



\subsection{Elements of the Method}

Now we will describe two important factors in the HMM method:
The choice of kernel and how to obtain consistent microscopic initial data for a
given macroscopic data.
We start with a discussion about kernels.

\subsubsection{Kernels} \label{section:kernels}

In this section we introduce smooth compactly supported averaging kernels used
to accurately compute averages of periodic and almost periodic functions.
We will give a short motivation why we choose to work with those
instead of just taking the usual mean value of $\bar{f} = \frac{1}{|\Omega|}\int_{\Omega} f(x) \D{x}$. We will also talk about how some numerical difficulties can be avoided by keeping the kernel on a factorized form.

Suppose $v^{\varepsilon}(x,t)$ is periodic function in $x$ and $t$, with mean value $\bar{v}$, and we wish to compute $\bar{v}$ with a numerical method. If we know the period of $v^{\varepsilon}$ in both $x$ and $t$ we can compute $\bar{v}$ with spectral accuracy by the trapezoidal rule. The situation becomes harder when we do
not know the exact period in one of or both of $x$ and $t$. Suppose we have the values of
$v^{\varepsilon}$ 
available over some domain $D= [-\eta,\eta] \times [-\tau,\tau]$ where both $\eta$ and $\tau$ are larger than but of the same order as $\varepsilon$. The normal mean value
then gives a first order error in $\varepsilon/\eta$ and $\varepsilon/\tau$,
\begin{equation}
	\left| \bar{v} - \frac{1}{\eta \tau} \int_{-\tau}^{\tau} \int_{-\eta}^{\eta} v^{\varepsilon}(\xi,s) \D{\xi} \D{s}  \right| \leq 
	C 
	\left(\frac{\varepsilon}\eta
+\frac{\varepsilon}\tau\right).
	\label{eq:kernel:NoKernelEstimate}
\end{equation}
To improve this error we follow the steps in \cite{engquist2005} and
introduce kernel functions of the following form. First, let $K\in C^q_c(\Real)$
satisfy
\begin{equation}    \lbeq{momentcond}
   \int K(x) x^j dx = \begin{cases} 1, & j=0,\\
   0, & j=1,\ldots, p,
   \end{cases}
   \qquad \supp K \subset [-1,1].
\end{equation}
We let $\mathbb{K}^{p,q}$ denote the space of all such functions.
In the computations we will scale $K$ and write 
$$
  K_\eta(x) = \frac{1}{\eta} K\left(\frac{x}{\eta}\right).
$$
This function inherits the integral 
properties of $K$ in \eq{momentcond}, but $\supp K_\eta\subset[-\eta,\eta]$.
The improvement from normal averaging that one can achieve with $K_\eta$
is shown in the next theorem,
which is a simple adaptation of some results derived in \cite{engquist2005}.
\begin{theorem}\lbtheo{oscapprox}
Let $f(t,s)$ be a 1-periodic function in the second variable whose 
derivative $\partial^r_t f(t,s)$ is bounded for $r=0,\ldots,p$.
Then for $f_\varepsilon(t):=f(t,t/\varepsilon)$ and $K\in {\mathbb K}^{p,q}$
we have
$$
   \left|\int K_\eta(\tau)f_\varepsilon(t+\tau) d\tau-\int_0^1 f(t,s)ds \right| \leq 
   C_1\eta^p+C_2 \left(\frac{\varepsilon}{\eta}\right)^q,
$$
where $C_1$ and $C_2$ depends on the derivatives of $f$ and $K$ but not
on $\eta$ or $\varepsilon$. Moreover, if $f(t,s)=a(t)b(s)$ and $b$ is 1-periodic,
then
\be{second}
   \left|\int K_\eta(\tau)f_\varepsilon(t+\tau) d\tau-\int_0^1 f(t,s)ds \right| \leq 
   C \max_{0\leq r\leq q} ||a^{(r)}||_{L^{\infty}[t-\eta,t+\eta]}\left(\frac{\varepsilon}{\eta}\right)^q.
\ee
\end{theorem}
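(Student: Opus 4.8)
The plan is to split the error into a \emph{slow} averaging error and a \emph{fast} (oscillatory) error. Write $\bar f(t)=\int_0^1 f(t,s)\,ds$ for the $s$-mean and $w(t,s)=f(t,s)-\bar f(t)$, so that $w$ is $1$-periodic in $s$ with zero $s$-mean. Since $\int K_\eta=1$, I insert $\bar f(t)=\int K_\eta(\tau)\bar f(t)\,d\tau$ and decompose
\[
\int K_\eta(\tau)f_\varepsilon(t+\tau)\,d\tau-\bar f(t)
=\underbrace{\int K_\eta(\tau)\big(\bar f(t+\tau)-\bar f(t)\big)\,d\tau}_{\mathrm{(I)}}
+\underbrace{\int K_\eta(\tau)\,w\!\big(t+\tau,(t+\tau)/\varepsilon\big)\,d\tau}_{\mathrm{(II)}}.
\]
Term (I) will produce the $C_1\eta^p$ contribution and term (II) the $C_2(\varepsilon/\eta)^q$ contribution. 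For (I) I would Taylor expand $\bar f(t+\tau)$ in $\tau$ to order $p-1$ with Lagrange remainder. The zeroth-order term cancels $\bar f(t)$ because $\int K_\eta=1$, and the terms of order $1,\dots,p-1$ vanish because $K$ has vanishing moments through order $p$ (the scaling gives $\int K_\eta(\tau)\tau^j\,d\tau=\eta^j\int K(u)u^j\,du=0$). Only the remainder survives, bounded by $\tfrac{1}{p!}\|\bar f^{(p)}\|_\infty\int|K_\eta(\tau)||\tau|^p\,d\tau=C_1\eta^p$, where $\|\bar f^{(p)}\|_\infty\le\|\partial_t^p f\|_\infty$. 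This is exactly where the hypothesis $r=0,\dots,p$ enters.

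The heart of the proof is (II): the idea is to trade each power of the fast oscillation for a factor $\varepsilon/\eta$ by repeatedly integrating by parts, using that $w$ has zero $s$-mean. I define iterated antiderivatives in the fast variable: let $W_0=w$ and let $W_{j+1}(t,\cdot)$ be the unique $1$-periodic, zero-mean primitive of $W_j(t,\cdot)$, so $\partial_s W_{j+1}=W_j$; each $W_j$ is bounded since $w$ is. The chain rule gives the key identity
\[
w\!\big(t+\tau,\tfrac{t+\tau}{\varepsilon}\big)
=\varepsilon\,\frac{d}{d\tau}W_1\!\big(t+\tau,\tfrac{t+\tau}{\varepsilon}\big)
-\varepsilon\,(\partial_t W_1)\!\big(t+\tau,\tfrac{t+\tau}{\varepsilon}\big).
\]
Substituting and integrating the total-derivative term by parts (boundary terms vanish since $\supp K_\eta\subset[-\eta,\eta]$) moves the $\tfrac{d}{d\tau}$ onto $K_\eta$, producing a factor $\int|K_\eta'|=O(\eta^{-1})$ and hence a net $O(\varepsilon/\eta)$; the correction term carries an extra $\varepsilon$ and an extra $t$-derivative but is again a zero-$s$-mean oscillatory integral of the same type. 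Iterating $q$ times, the leading branch reaches $(-\varepsilon)^q\int K_\eta^{(q)}(\tau)\,W_q(t+\tau,\tfrac{t+\tau}{\varepsilon})\,d\tau$, bounded by $\varepsilon^q\cdot O(\eta^{-q})\cdot\|W_q\|_\infty=O((\varepsilon/\eta)^q)$, which is where $K\in C^q_c$ is essential.

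\textbf{The main obstacle is the bookkeeping of this branching recursion:} I must show that \emph{every} term generated — each a product of $\tau$-derivative steps (worth $\varepsilon/\eta$) and $t$-derivative steps (worth $\varepsilon\le\varepsilon/\eta$ since $\eta\le1$) — is $O((\varepsilon/\eta)^q)$. The constant $C_2$ therefore depends on the $L^1$-norms $\int|K^{(m)}|$ and on sup-norms of $\partial_t^r f$ for $r$ up to $q$, so for this estimate one implicitly needs $f$ to be $C^q$ in $t$ (the stated bound $r\le p$ controls only $C_1$). For the factorized refinement $f(t,s)=a(t)b(s)$ I would run the \emph{same} recursion and exploit that everything separates: with $\tilde b=b-\bar b$ one has $w=a(t)\tilde b(s)$ and $W_j(t,s)=a(t)B_j(s)$, where $B_j$ are the iterated zero-mean periodic primitives of $\tilde b$ — pure functions of $s$ whose sup-norms depend only on $b$. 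Every term of the expansion is then of the form $\pm\varepsilon^{(\#\text{steps})}\int K_\eta^{(m)}(\tau)\,a^{(l)}(t+\tau)\,B_j\!\big(\tfrac{t+\tau}{\varepsilon}\big)\,d\tau$. Bounding $\int|K_\eta^{(m)}|=O(\eta^{-m})$, $|B_j|\le C(b)$, and crucially $|a^{(l)}(t+\tau)|\le\|a^{(l)}\|_{L^\infty[t-\eta,t+\eta]}$ (since $\tau$ ranges over $\supp K_\eta\subset[-\eta,\eta]$), the powers of $\varepsilon$ and $\eta^{-1}$ again combine to $(\varepsilon/\eta)^q$ and the $a$-dependence collapses to $\max_{0\le r\le q}\|a^{(r)}\|_{L^\infty[t-\eta,t+\eta]}$, yielding \eqref{second}; the slow part of the factorized error is handled exactly as in (I).
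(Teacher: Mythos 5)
The paper offers no proof of this theorem: it is stated as ``a simple adaptation of some results derived in'' \cite{engquist2005}, so there is nothing internal to compare against. Your route is the natural one and matches the mechanism behind the cited result — split off the $s$-average, kill the slow error with the moment conditions of $K$, and beat the oscillatory remainder down by trading powers of the fast variable for factors of $\varepsilon/\eta$, each trade costing one derivative of $K$ (your iterated zero-mean periodic primitives play the role that the decay $|\widehat K(\xi)|\le C|\xi|^{-q}$ plays in a Fourier-series version of the same argument). Your treatment of term (I) and of the branching recursion in (II) is sound, and your observation that the constant $C_2$ implicitly requires $\partial_t^r f$ bounded for $r$ up to $q$ rather than only up to $p$ is a fair catch: the stated hypothesis covers only the $\eta^p$ term.

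There is, however, a genuine gap in your proof of the second estimate \eq{second}. Applied to $f(t,s)=a(t)b(s)$, your decomposition leaves the non-oscillatory piece $\bar b\left(\int K_\eta(\tau)a(t+\tau)\,d\tau-a(t)\right)$ with $\bar b=\int_0^1 b\,ds$, which by your own analysis of (I) is of size $\eta^p\Vert a^{(p)}\Vert$ and is \emph{not} bounded by $C(\varepsilon/\eta)^q$: take $b\equiv 1$, fix $\eta$, and let $\varepsilon\to 0$ — the right-hand side of \eq{second} vanishes while the left-hand side does not. So ``the slow part is handled exactly as in (I)'' contradicts the bound you are trying to prove. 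The estimate \eq{second} is literally true only when this slow contribution vanishes, i.e.\ when $b$ has zero mean or when $a$ is a polynomial of degree at most $p$, in which case the moment conditions annihilate the slow error exactly; the latter is how the theorem is actually invoked later in the paper (in the lemmas on $s_m$ and $w_0^{(r)}$, where the slowly varying factors are monomials). You should either add the hypothesis $\int_0^1 b\,ds=0$, restrict $a$ to polynomials of degree $\le p$, or retain a $C_1\eta^p$ term in \eq{second}. The oscillatory half of your factorized argument — the separation $W_j=a(t)B_j(s)$ and the localization of $\Vert a^{(r)}\Vert$ to $[t-\eta,t+\eta]$ via the support of $K_\eta$ — is correct as written.
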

Hence, the convergence rate in \eqref{eq:kernel:NoKernelEstimate} with respect to $\eta$ and $\tau$ can be improved significantly,
\begin{equation}
	\left| \bar{v} - \iint K_{\eta}(\xi) K_{\tau}(s) v^{\varepsilon}(\xi,s) \D{\xi} \D{s} \right| 
	\leq 
	C 
	\left[\Bigl(\frac{\varepsilon}\eta\Bigr)^q
+\Bigl(\frac{\varepsilon}\tau\Bigr)^q\right].
\end{equation}

In our method we have chosen to use functions in $\mathbb{K}^{p,q}$ which
are polynomials and symmetric, $K(x)=K(-x)$. Since the first $q$ derivatives
of $K$ must vanish at $x=\pm 1$, those polynomials can always be factorized as
\be{Kfactor}
	K(x) = (1-x^2)^{q+1} P(x),
\ee
where $P$ is a degree $p-1$ polynomial. Finding the coefficients for $P(x)$
and subsequently evaluating $K(x)$ using the form \eq{Kfactor}
is much more stable numerically, than directly finding the coefficients for $K$,
in particular for large $p$ and $q$.
This is because the coefficients of $K$ then become very large.
A code which computes the $P$ coefficients for any $p,q$ can be found in \cite{holst2011}.
An example why it is important for the numerical accuracy to factorize the polynomial
is shown in Table~\ref{tab:kernerr}. 

\begin{table}[htb]
\centering
\begin{tabular}{|c|c|c|} \hline
  \backslashbox{$N$}{$K(x)$} & $\overbrace{7475.95\cdots x^{28} + \cdots}^{\text{14 terms}}$ & $(1-x^2)^{q+1} P(x)$ \\ \hline
   10 & 6.0572e-04 & 6.0572e-04 \\
   20 & 1.5465e-07 & 1.5467e-07 \\
   40 & 2.2333e-10 & 2.6219e-10 \\
   80 & 3.6733e-11 & 8.3822e-14 \\
  160 & 4.8612e-11 & 1.1102e-16 \\
  \hline
\end{tabular}
\caption{Here we have plotted the relative error as a function of $N$ (grid intervals) for two ways of evaluating the integral of the same polynomial $K \in \mathbb{K}^{9,9}$. The polynomial evalutions themselves are done with Horner's method (MATLAB builtin function \texttt{polyval}). We clearly see that there is a great benefit in using the factorized $K$ from a numerical perspective. The 28 degree polynomial $K$ has only even powers of $x^i$, but has very large coefficients. The large coefficents and the number of terms makes it difficult to evaulate $K$ accuractly. The expected convergence rate is spectral. \label{tab:kernerr}}
\end{table}

\subsubsection{Consistency of the Microproblem Initial Data}\lbsec{Consistency}

An important aspect in our HMM method is that the initial data for the micro problem is consistent with the current macroscopic state.
In practice we approximate the macroscopic state with a third order polynomial
$U(x)$ which interpolates the macroscopic grid values
$u^n_{m-2},\ldots,u^n_{m+1}$ in $x_{m-2},\ldots,x_{m+1}$.
We should then choose a polynomial $Q(x)$ as initial data
to (\ref{eq:hmm:microproblem}) such that
$$
  U(x) = (\mathcal{K} v^{\varepsilon})(0,x).
$$
To simplify the discussion we introduce the operator $\mathcal{M}$ which
maps the initial data of (\ref{eq:hmm:microproblem}) to $(\mathcal{K} v^{\varepsilon})(0,x)$.
We hence seek $Q$ such that
$$
  U(x) = (\mathcal{M} Q)(x).
$$
We will not be able to find a $Q$ where this is satified exactly. However,
in this section we will show how to find $Q$ given $U$ such this equality
is satisfied to high order in $\varepsilon$.

When the coefficient $A^\varepsilon$ is precisely periodic 
and $Q$ is a third order polynomial,
we show below in \sect{Consistency2} that,
under some simplifying assumptions, 
\be{MQ}
  (\mathcal{M} Q)(x) = Q(x) +\varepsilon^2 \gamma Q''(x) + \BigO(\varepsilon^3),
\ee
where $\gamma$ is a constant. Hence, if we take $Q=U$ we make a
$O(\varepsilon^2)$ error. For the long time problem we will need better accuracy.

We assume that the initial data is of the form
$Q(x) = c_0 + c_1 x + c_2 x^2 + c_3 x^3$ and
make that ansatz that
$$
  (\mathcal{M} Q)(x) \simeq U(x):=\tilde{c}_0 + \tilde{c}_1 x + \tilde{c}_2 x^2 + \tilde{c}_3 x^3.
$$
Let $\cvec^T = \begin{bmatrix} c_0 & c_1 & c_2 & c_3 \end{bmatrix}$, $\tilde{\cvec}^T = \begin{bmatrix} \tilde{c}_0 & \tilde{c}_1 & \tilde{c}_2 & \tilde{c}_3 \end{bmatrix}$ and $\x^T = \begin{bmatrix} 1 & x & x^2 & x^3 \end{bmatrix}$. 
Then $Q(x)=\x^T\cvec$ and $U(x)=\x^T\tilde{\cvec}$.
We define $\e_i$ as the unit vector with a one in the $i$th component (counting from zero) and $I$ as the $4 \times 4$ identity matrix.
For consistency we want to figure out which $\cvec$ that gives us a particular $\tilde{\cvec}$ and then 
compute the flux $F$ for that $\tilde{\cvec}$. 
As an example, assume that $Q(x) = x^3$, which would mean that $\cvec=\e_3$. We cannot use this as initial data to obtain the correct flux as it will be inconsistent (to order $\varepsilon^2$), 
as seen from \eq{MQ} and as
described briefly in \cite{engquist2011c}. 
Instead we wish to figure out which initial data $\x^T\cvec$ that gives us a $\x^T \tilde{\cvec}$ such that $\tilde{\cvec} = \e_3$ and use that $\x^T \cvec$ as initial data. We will show that it is not necessary to recompute any micro problems once we have solved just four independent micro problems.

Since $\mathcal{M}$ is a linear operator we can represent it by a matrix $M$ acting between
the coefficients $\cvec$ and $\tilde{\cvec}$,
\be{Mceq}
  \tilde{\cvec} = M \cvec.
\ee
We now note that we can find the elements of $M$ from solving four separate microproblems
as follows. For $i=0,\ldots, 3$ we take
$Q_i(x) = \x^T\e_i = x^i$. Solving the corresponding
microproblem gives $U_i:=\mathcal{M}Q_i$, which we can evaluate
in four points. We fit a third order polynomial to these points and take this
as an approximation of $U_i(x)=:\x^T\tilde{\cvec}_i$. Then,
$\tilde{\cvec}_i$ is the $i$-th column of $M$,
$$
  \tilde{\cvec}_i = M \e_i = 
  \begin{bmatrix}
		m_{0,i} \\
		m_{1,i} \\
		m_{2,i} \\
		m_{3,i} \\
	\end{bmatrix}.
$$
Once we have found $M$ we can solve \eq{Mceq} to obtain $\cvec$ from $\tilde{\cvec}$.
We call $M$ the \emph{correction matrix}.
Based on the scaling in \eq{MQ} we can see that $M$ is a
$\varepsilon^2$-perturbation of the identity, $M=I+\varepsilon^2 M_0$
for some $M_0$.

Once we have computed the $M$ matrix we can also
compute the correct flux,
without solving more micro problems.
For each of the four problems solved we record the corresponding
flux and denote it by $f_i$. Moreover, we
set $\f^T = \begin{bmatrix} f_0 & f_1 & f_2 & f_3 \end{bmatrix}$.
Since the flux computation is also linear in the input $\cvec$ the computed
flux from $Q(x)$ is simply
$$
   F(Q) = \f^T\cvec.
$$
%
Thus, given $U=\x^T \tilde{\cvec}$ the
corrected flux is
\begin{equation}
  \tilde{F}(U) = \tilde{\f}^T \tilde{\cvec}, \qquad \tilde{\f} := M^{-T} \f.
  \label{eq:tildef}
\end{equation}



Let $N$ be the number of grid points on the shifted grid $x_{m-\frac{1}{2}}$ ($1 \leq m \leq N$).
We determine the coefficients in the $4 \times 4$ matrices correction matrices $M^{(m)}$ with the following algorithm
\begin{algorithmic}
\FOR{$m = 1 \to N$}
	\STATE{$x' \gets x_{m-\frac{1}{2}}$}
	\FOR{$i = 1 \to 4$}
		\STATE{Initial data $Q(x) \gets x^{i-1}$, i.e. $c_i=1$ and other $c_j=0$}
		\STATE{Solve micro problem \eqref{eq:hmm:microproblem} $v^{\varepsilon}$ for all $(x,t) \in [-\eta,\eta] \times [-\tau,\tau]$}
		\STATE{Compute $Y_k \gets \mathcal{K} (u^{\varepsilon})(\Delta_k,0)$ for some distinct $\Delta_k$ around $x=0$}
		\STATE{Find a third degree polynomial that fits $(\Delta_k,Y_k)$, which gives $\tilde{c}$.}
		\STATE{$M^{(m)}_{ij} = \tilde{c}_{j-1}, \quad 1 \leq j \leq 4$}
	\ENDFOR
\ENDFOR
\label{alg:Amatrix}
\end{algorithmic}

So far we have only been concerned with the initial data for $\veps$.
In (\ref{eq:hmm:microproblem}) we also need initial data for $\veps_t$.
However, as the following derivation shows, the $\veps_t$ data does
not affect the flux when we use symmetric kernels.
We consider the wave propagation problem of the form
\begin{equation}
	\left\{
	\begin{aligned}
		& v^{\varepsilon}_{tt} - \partial_x (A^{\varepsilon} v^{\varepsilon}_x) = 0, \\
		& v^{\varepsilon}(x,0) = f(x), \quad v^{\varepsilon}_t(x,0) = g(x).
	\end{aligned}
	\right.
	\label{eq:perturbedwave}
\end{equation}
We can split the solution
into two parts $v^{\varepsilon} = v_1^{\varepsilon} + v_2^{\varepsilon}$ where $v_1^{\varepsilon}$ solves \eqref{eq:perturbedwave} with the initial conditions
\begin{equation*}
	v_1^{\varepsilon}(x) = f(x), \quad \partial_t v_1^{\varepsilon}(x,0) = 0,
\end{equation*}
and $v_2^{\varepsilon}$ solves \eqref{eq:perturbedwave} with the initial conditions
\begin{equation*}
	v_2^{\varepsilon}(x) = 0, \quad \partial_t v_2^{\varepsilon}(x,0) = g(x).
\end{equation*}
The solution $v_2^{\varepsilon}$ will be anti-symmetric in time, i.e. $v_2^{\varepsilon}(-t,x) = -v_2^{\varepsilon}(t,x)$, since $A^\varepsilon$ is time-independent. The same
holds for $A^{\varepsilon} \partial_x v_2^{\varepsilon}$.
Then, because our kernels are symmetric, $\mathcal{K}(A^{\varepsilon} \partial_x v_2^{\varepsilon})(0,x)=0$ and
$$
F(0,0) = \mathcal{K}(A^{\varepsilon} \partial_x v^{\varepsilon})(0,0)
=\mathcal{K}(A^{\varepsilon} \partial_x v_1^{\varepsilon})(0,0),
$$
independent of $g(x)$. In the computations we therefore simply take $g\equiv 0$.

\subsection{Changes of the HMM Method for Finite Time for Accurate Long Time Wave Propagation} \label{section:changes}

We have have made three important changes to the HMM method for finite time~\cite{engquist2011} to capture the effective solution for long time wave propagation:
\begin{enumerate}
\item The form of the macroscopic PDE (\ref{eq:introduction:eff})
is the same, $u_{tt}-\partial_x F=0$, but the flux $F=\bar{A}u_x + \beta \varepsilon^2 u_{xxx}$ contains a third derivative of the macroscopic solution. To capture the additional information we need to pass more information to the micro problem. The initial data is chosen as a third order polynomial, instead of a linear polynomial as for the HMM method for finite time.
\item We need to make sure that the initial data in the micro problem is consistent to a very high accuracy with the macroscopic solution.
As we described in \sect{Consistency}, the initial data to the micro problem needs to be consistent with the macroscopic data to accurately capture the dispersive effects which are important for long time wave propagation.
\item Since we need to accurately represent also the second term in the flux, $\beta \varepsilon^2 u_{xxx}$, the error in the flux computation \eqref{eq:hmm:Ftildekernel} must be smaller than $\BigO(\varepsilon^2)$. By \theo{oscapprox}, the error $\mathcal{E}$ in the flux computation is proportional to,
$$
	\mathcal{E} \sim \eta^{p} + \tau^{p} + (\varepsilon/\tau)^q + (\varepsilon/\eta)^q.
$$
Thus, we need to choose $p$, $q$, $\eta$ and $\eta$ such that $\mathcal{E} \ll \varepsilon^2$, asymptotically. 
This means that the micro box must be larger than $\varepsilon$.
For instance, if $p=q>2$ we can take take $\alpha<1-2/q$. Then
$$
\eta,\tau \sim \varepsilon^{\alpha} 
\quad\Rightarrow\quad \mathcal{E} = O\left(\varepsilon^{p-2}
+\varepsilon^{(1-\alpha)q}\right)\ll O(\varepsilon^2).
$$
Recall that in the finite time case we always take $\eta,\tau\sim\varepsilon$, 
This hence explains why we need to have more accurate kernels or bigger micro boxes in the long time case. In order to maintain a low computational cost we usually want small micro box, which can be obtained by using a very regular kernel, i.e. large 
$q$.
\end{enumerate}



%

\begin{remark}
We use the same numerical scheme \eqref{eq:hmm:macroscheme} to solve the micro problem \eqref{eq:hmm:microproblem},
where $F^n_{m-\frac{1}{2}}$ is computed as,
\begin{equation}
	F^n_{m-\frac{1}{2}} = \frac{A^{\varepsilon}(x_{m-\frac{1}{2}}+x') }{24 h} 
	\left[  -v^n_{m+3/2} + 27 v^n_{m+1/2} - 27 v^n_{m-1/2} + v^n_{m-3/2} \right].
\end{equation}
\end{remark}

\section{Theory} \label{section:theory}

In this section we will analyze the flux used
in HMM. In particular we want to compare it with the corresponding
flux for the effective equation. We will focus on the case where we
have a good understanding of this equation, namely when the
coefficient $A^\varepsilon(x)$ is precisely periodic.
We thus consider the micro problem centered at $x=0$
extended to the whole space,
\be{waveeqT}
   \left\{
   \begin{aligned}
     & \veps_{tt} = \nabla A(x/\varepsilon)\nabla\veps, \\
     & \veps(0,x)=Q(x),\quad \veps_t(0,x)=0,\qquad x\in\Real^d,
   \end{aligned}
   \right.
\ee
where $A(y)$ is 1-periodic. 
In \cite{engquist2011} we studied the finite time case where $Q(x)$ was a
first order polynomial. 
We could then prove that
\begin{equation}
	F_{\rm HMM}(Q,\alpha) = F_{\rm HOM}(Q) + \mathcal{O}\left(\alpha^{q}\right),
	\qquad F_{\rm HOM}(Q) := \bar{A}\grad Q,
	\label{eq:FHMM_Q_alpha}
\end{equation}
where we have defined
$$
   \alpha = \frac{\varepsilon}{\eta}.
$$
This means that when the microbox size $\eta$ becomes large in comparison
to $\varepsilon$, hence when $\alpha\to 0$, the HMM procedure
 generates results close to a direct discretization of the homogenized equation \eqref{eq:introduction:wavebar}. 
 
 The restriction to linear initial data made the analysis significantly easier,
 since then $\veps(t,x)-\veps(0,x)$ is periodic for all $t$ and the solution
 could be analyzed via a rather straightforward spectral decomposition. For the long
 time case we need to use initial data $Q(x)$ which is 
 at least a cubic polynomial. To analyze this case we need to use the
 full Bloch theory for wave propagation in a periodic medium.
 This was also used by Santosa and Symes in 
\cite{santosa1991} to describe the effective properties for periodic
medium over long time. We start by reviewing this theory. 
 
\subsection{Long Time Homogenization}

The theory for long time homogenization formally
 extends the validity of the effective model \eqref{eq:introduction:wavebar} 
 from $T=\mathcal{O}(1)$ up to time $T=\mathcal{O}(\varepsilon^{-2})$.
 It does this by adding more terms to the effective equation.
 The basic model was 
 presented in \cite{santosa1991}.

We start with some notation and definitions for Bloch waves.
We let $\omega_m^2$ and $\psi_m$ be the eigenvalues and eigenfunctions of the shifted cell (eigenvalue) problem~\cite[pp.~614]{bensoussan1978},
\begin{equation*}
	\left\{
	\begin{aligned}
		& -\left(\nabla_y + i k \right) A(y) \left(\nabla_y + i k\right) \psi(y,k) = \omega^2(k) \psi(y,k), \qquad Y \times Y , \\
		& \text{$\psi(y,k)$ is $Y$-periodic in $y$}, \\
	\end{aligned}
	\right.
\end{equation*}
where $Y=[0,1]^d$ and $k \in \mathbb{R}^d$. Let $\phi_m(x,k)$ be the 
$\varepsilon$-scaled Bloch-waves,
\begin{equation*}
	\phi_m(x,k) = \psi_m(x/\varepsilon,\varepsilon k) \exp(ikx),
\end{equation*}
which satisfies
\begin{equation*}
	- \nabla_x \left( A\left(\frac{x}{\varepsilon}\right) \nabla_x \phi_m \right) = \frac{1}{\varepsilon^2} \omega^2_m(\varepsilon k) \phi_m.
\end{equation*}

We will now consider the general solution to \eq{waveeqT}
when we assume that the initial data $Q(x)$
is band limited with smallest wavelength larger than $\varepsilon$,
\be{compactsupp}
    Q(x) = \int \hat{Q}(k) e^{ixk}\D{k},\qquad \supp\;\hat{Q}\subset[-\pi/\varepsilon,\pi/\varepsilon].
\ee
We let $V_m$ and $\hat{q}_m$ be the parts of $\veps$ and $Q$ 
that project on $\phi_m$, 
\begin{equation*}
	V_m(t,k) = \int \veps(t,x) \phi^{\ast}_m(x,k) \D{x},\qquad
	\hat{q}_m(k) = \int Q(x) \phi^{\ast}_m(x,k) \D{x}.
\end{equation*}
Then
$$
		\veps(t,x) = \sum_{m=0}^{\infty} \int_{Y/\varepsilon} V_m(t,k) \phi_m(x,k) \D{k}.
$$
Upon inserting this in \eq{waveeqT} we see that $V_m$ will satisfy the ODE
$$
 \partial_{tt} V_m + \frac{1}{\varepsilon^2} \omega^2_m(\varepsilon k) V_m =0,
 \qquad V_m(0,k)=\hat{q}_m(k), \quad \partial_{t} V_m(0,k)=0.
$$
By solving this we
can then write down the exact solution to \eq{waveeqT} as
\begin{align}\lbeq{vexact}
  \veps(t,x) &= 
  \sum_{m=0}^\infty
  \int_{Y/\varepsilon}
  \hat{q}_m(k)
    \phi_m(x,k)\cos(\omega_m(\varepsilon k)t/\varepsilon)
\D k =: \sum_{m=0}^\infty v_m(t,x),
\end{align}
where
$$
  v_m(t,x) = 
  \int_{Y/\varepsilon}
  \hat{q}_m(k)
    \phi_m(x,k)\cos(\omega_m(\varepsilon k)t/\varepsilon)
\D k.
$$
The following theorem from \cite{santosa1991} 
states that $v_0$ is the leading order contribution to $\veps$. In
fact the sum of the terms with $m \geq 1$ is bounded by $\mathcal{O}(\varepsilon)$
in $L^2$, uniformly in time.
\begin{theorem}[Santosa \& Symes]
	Suppose $\veps$ solves \eq{waveeqT} with $d=3$ and initial $Q$ satisfying
	\eq{compactsupp}. Then
	\begin{equation*}
		\int_{\mathbb{R}^3} \left|\sum_{m=1}^\infty v_m(t,x) \right|^2 \D{x} \leq C \varepsilon^2.
	\end{equation*}
	Here $C$ is independent of $\varepsilon$ and $t$ but depends on the $H^2$-norm of the initial data $f$ and the $L^{\infty}$-norm of $A$ and $\partial A$. 
	\label{thm:santosa1991}
\end{theorem}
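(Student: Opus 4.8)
The plan is to exploit the orthogonality of the Bloch decomposition to turn the $L^2$ norm of the tail $\sum_{m\ge1}v_m$ into a sum of squared projection coefficients, and then to show that for band-limited, slowly varying data these coefficients are of size $O(\varepsilon)$. First I would record the Plancherel identity for the $\varepsilon$-scaled Bloch transform: since for each fixed $k$ the family $\{\psi_m(\cdot,\varepsilon k)\}_{m\ge0}$ is a complete orthonormal basis of $L^2(Y)$, the waves $\phi_m(x,k)$ yield an orthogonal decomposition of $L^2(\mathbb{R}^3)$, so that $\int|\sum_{m\ge1}v_m(t,x)|^2\,dx = C\sum_{m\ge1}\int_{Y/\varepsilon}|\hat q_m(k)|^2\cos^2(\omega_m(\varepsilon k)t/\varepsilon)\,dk$ with the same normalization constant $C$ as in the forward transform. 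Bounding $\cos^2\le1$ immediately removes the time dependence and reduces everything to estimating $\sum_{m\ge1}\int_{Y/\varepsilon}|\hat q_m(k)|^2\,dk$, giving a bound uniform in $t$; this is where the gain in $\varepsilon$ must come from.

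Next I would compute $\hat q_m$ explicitly using the band-limiting assumption on $Q$. Writing $Q(x)=\int\hat Q(k')e^{ik'x}\,dk'$ and expanding the $\varepsilon$-periodic factor $x\mapsto\psi_m^\ast(x/\varepsilon,\varepsilon k)$ in its Fourier series with coefficients $c_{m,n}(\kappa)=\int_Y\psi_m^\ast(y,\kappa)e^{-2\pi i n\cdot y}\,dy$, the $x$-integral collapses to a sum over $n\in\mathbb{Z}^3$ of shifted copies $\hat Q(k-2\pi n/\varepsilon)$. Because $\supp\hat Q\subset[-\pi/\varepsilon,\pi/\varepsilon]^3$ and $k$ ranges over the same Brillouin zone $Y/\varepsilon$, only the $n=0$ term survives, and I obtain $\hat q_m(k)=c\,c_{m,0}(\varepsilon k)\hat Q(k)$, where $c_{m,0}(\kappa)=\int_Y\psi_m^\ast(y,\kappa)\,dy$ is the cell-average of the $m$-th Bloch eigenfunction. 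Hence $\sum_{m\ge1}|\hat q_m(k)|^2=c^2|\hat Q(k)|^2\sum_{m\ge1}|c_{m,0}(\varepsilon k)|^2$.

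The heart of the argument is the estimate $\sum_{m\ge1}|c_{m,0}(\kappa)|^2\le C|\kappa|^2$ on the Brillouin zone. Parseval applied to the constant function $\mathbf 1\in L^2(Y)$ in the basis $\{\psi_m(\cdot,\kappa)\}$ gives $\sum_{m\ge0}|c_{m,0}(\kappa)|^2=\|\mathbf 1\|_{L^2(Y)}^2=1$, so $\sum_{m\ge1}|c_{m,0}(\kappa)|^2=1-|c_{0,0}(\kappa)|^2$. At $\kappa=0$ the lowest eigenfunction is constant, hence $|c_{0,0}(0)|=1$; since $|c_{0,0}(\kappa)|\le1$ always by Cauchy--Schwarz, the smooth function $|c_{0,0}|^2$ attains an interior maximum at the origin, and Taylor expansion forces $1-|c_{0,0}(\kappa)|^2=O(|\kappa|^2)$ near $\kappa=0$; away from the origin $|\kappa|^2$ is bounded below, so the inequality persists on the whole compact zone. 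Inserting $\kappa=\varepsilon k$ and assembling the pieces yields $\int|\sum_{m\ge1}v_m|^2\,dx\le C\varepsilon^2\int|k|^2|\hat Q(k)|^2\,dk\le C\varepsilon^2\|Q\|_{H^1}^2$; since the data is assumed to lie in $H^2$ this gives the claimed bound (in fact $H^1$ regularity already suffices).

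The main obstacle is the analytic input underlying the quadratic vanishing above: one must know that the bottom of the spectrum, $\omega_0^2(\kappa)$, is a simple eigenvalue near $\kappa=0$ with the eigenpair depending smoothly (analytically, via Kato's perturbation theory) on $\kappa$, so that $c_{0,0}$ is genuinely $C^2$ there. The facts that $\psi_0(\cdot,0)\equiv\text{const}$ and that the acoustic branch is non-degenerate at the origin are exactly the scalar structure available here. A secondary technical point is fixing the normalization constant in the Plancherel identity so that the forward and inverse transforms are consistent; this is bookkeeping rather than a genuine difficulty, and the measure-zero overlap at the boundary $|k|=\pi/\varepsilon$ in the folding argument is harmless.
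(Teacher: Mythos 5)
The paper does not actually prove this theorem; it states it and refers to Santosa and Symes for the proof, so the comparison here is with the argument in that reference. Your proposal is essentially correct, but it follows a genuinely different route. The cited proof gains the factor $\varepsilon^2$ from the eigenvalue equation: for $m\geq 1$ one writes
$\hat q_m(k) = -\varepsilon^2\,\omega_m(\varepsilon k)^{-2}\int \nabla\cdot\bigl(A^\varepsilon\nabla Q\bigr)\,\phi_m^*\,\D{x}$
by integrating by parts twice, uses the uniform spectral gap $\omega_m(\kappa)\geq\omega_1(\kappa)\geq c>0$ for $m\geq1$, and then Bessel's inequality; since $\nabla\cdot(A^\varepsilon\nabla Q)$ contains a term of size $\varepsilon^{-1}\|\partial A\|_{L^\infty}\|\nabla Q\|$, the net result is $O(\varepsilon^2)$ with a constant depending on $\|Q\|_{H^2}$, $\|A\|_{L^\infty}$ and $\|\partial A\|_{L^\infty}$ --- exactly the dependence announced in the theorem statement. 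You instead push all the work onto the coefficients $c_{m,0}(\kappa)=b_m(\kappa)$: the folding identity $\hat q_m(k)=\hat Q(k)b_m(\varepsilon k)$ and the facts $\sum_m|b_m|^2=1$, $B(0)=1$, $B'(0)=0$ are precisely the content of the paper's later lemmas on $\hat q_m$ and on $b_m$, and the quadratic vanishing $1-B(\kappa)\leq C|\kappa|^2$ then follows from the interior maximum of the smooth function $B$ at $\kappa=0$ (smoothness requiring simplicity of the acoustic branch there, as you note). Your version buys a slightly sharper hypothesis ($H^1$ data suffices, $\cos^2\leq1$ makes uniformity in $t$ immediate) at the cost of an implicit constant coming from second-order perturbation theory for the bottom Bloch band, whereas the original argument keeps the constant explicit in terms of $\|A\|_{L^\infty}$, $\|\partial A\|_{L^\infty}$ and $\|Q\|_{H^2}$ and needs no band regularity beyond the gap. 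Both are valid; yours also has the virtue of reusing exactly the machinery this paper develops for the flux analysis.
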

See \cite{santosa1991} for proof. The long time homogenized
equation is an equation, approximately, satisfied by 
the leading term $v_0$ derived in \cite{santosa1991}. 
In one dimension it is of the form
\begin{equation}
	\bar{u}_{tt} = \bar{A} \bar{u}_{xx} + \varepsilon^2\beta\bar{u}_{xxxx}.
	\label{eq:theory:effective}
\end{equation}
The homogenized coefficients $\bar{A}$ and $\beta$ are given as derivatives
of 
$$
   \Omega(k) := \omega_0(k)^2,
$$
not to be confused with the spatial domain $\Omega$, evaluated at $k=0$,
\be{abarbeta}
	\bar{A} = \frac{1}{2!} \Omega''(0), \qquad
	\beta = -\frac{1}{4!} \Omega''''(0).
\ee
This model is in principle valid up to time $t={O}(\varepsilon^{-2})$.  
However, the equation is not
well-posed since the sign of $\beta$ is wrong. When the grid is refined
the numerical solution becomes unstable.
Still, solving this numerically on a coarse grid gives solutions that
agrees well with the long time behaviour of \eq{waveeqT} as demonstrated
in \cite{santosa1991,engquist2011c}. 
See \cite{engquist2011c} for further discussion and analysis of this issue.

\begin{remark}
There is another formulation of the effective equation 
which is also valid up to $T=\mathcal{O}(\varepsilon^{-2})$. 
This equation is well-posed but the higher order correction term 
involves time derivatives. It is of the form
$$
	\bar{u}_{tt} = \bar{A} \bar{u}_{xx} + \varepsilon^2\frac{\beta}{\bar{A}}\bar{u}_{xxtt}.
$$
It was recently analyzed in \cite{lamacz2011} where convergence was established rigorously
also for the long time case.
\end{remark}

\subsection{The Flux}

As seen in \eqref{eq:theory:effective} and \eq{abarbeta}
the homogenized flux is
\be{Fhom}
  F_{\rm HOM}(Q) = \bar{A}Q' + \beta \varepsilon^2 Q'''
  = \frac{1}{2!} \Omega''(0)Q' -\varepsilon^2 \frac{1}{4!} \Omega''''(0)Q'''.
\ee
We will here analyze the expression for the corresponding HMM flux 
in the one-dimensional case.
This flux is computed using the averaging kernels
from Section \ref{section:kernels}. Let $\veps$ be a solution to the HMM micro problem \eq{waveeqT}. Henceforth we drop
the explicit dependence on initial data $Q$ in the notation for $F_{\rm HMM}(Q,\alpha)$.
Then,
\begin{align}\lbeq{Ftilde}
  F_{\rm HMM}(\alpha):={}&\int\int K_\eta(x)K_\eta(t)a(x/\varepsilon)\partial_x \veps(t,x)\D{x}\D{t}
\nonumber\\
  ={}&\sum_{m=0}^\infty  \int\int K_\eta(x)K_\eta(t)a(x/\varepsilon)  \times \nonumber\\
  {}&\int_{-1/2\varepsilon}^{1/2\varepsilon}
  \hat{q}_m(k)\partial_x
    \psi_m\left(\frac{x}{\varepsilon},\varepsilon k\right)e^{ikx}
 \cos\left(\frac{\omega_m(\varepsilon k)t}{\varepsilon}\right)
\D{k}\D{x}\D{t}.
\end{align}
The goal is to study how $F_{\rm HMM}(\alpha)$ behaves as $\alpha\to 0$, 
hence, when the microbox size
$\eta$ becomes large in comparison to $\varepsilon$. (We always assume $\alpha\leq 1$.)
We first note that for any $r(x)$, the expression
\begin{align*}
   \int K_\eta(x) r(x/\varepsilon)\D{x} &=  \int \frac{1}{\eta}K(x/\eta) r(x/\varepsilon)\D{x} = \frac{\varepsilon}{\eta}
     \int K(x\varepsilon/\eta) r(x)\D{x} \\
     &= \alpha
     \int K(x\alpha) r(x)\D{x},
\end{align*}
is a function of the ratio $\alpha=\varepsilon/\eta$, not of $\eta$ or $\varepsilon$
individually.
We therefore set
$$
   b_m(k) = \int_{-1/2}^{1/2}\psi_m^*(x, k)\D{x},
   \qquad
 s_m(k,\alpha) 
   = \int K_\eta(t)
   \cos(\omega_m(k)t/\varepsilon)
   \D{t}.
$$
Moreover, let
$$
  w_m(k,\alpha) = 
   \varepsilon \int K_\eta(x)a(x/\varepsilon)\partial_x \psi_m(x/\varepsilon,k)e^{ikx/\varepsilon} \D{x},
$$
and finally define
\be{Wdef}
   W(k,\alpha) = \sum_{m=0}^\infty b_m(k)s_m(k,\alpha)w_m(k,\alpha).
\ee
We can then prove the following theorem which, 
under a smoothness assumption on $W$, shows that when initial data
is a polynomial 
the HMM flux (scaled by $\varepsilon$) is always a polynomial
in $\varepsilon\partial_x$ of the same order, whose coefficients only
depend on $\alpha$, not directly on $\varepsilon$. 
\begin{theorem}\lbtheo{FW}
Suppose $a\in C^1$, $W(\cdot,\alpha)\in L^{2}(-\pi,\pi)$
and \eq{compactsupp} holds.
Then
$$
  F_{\rm HMM}(\alpha)=
  \frac{1}{\varepsilon}
  \int_{\Real} \hat{Q}(k)W(\varepsilon k,\alpha)\D{k}.
$$
Moreover, if $Q$ is a polynomial of degree $r$ and $W(\cdot,\alpha)\in C^r(-\pi,\pi)$,
$$
  F_{\rm HMM}=
 \varepsilon^{-1}a_0(\alpha)Q(0) 
 +a_1(\alpha) Q'(0)
 +\varepsilon a_2(\alpha)Q''(0) + \cdots +
 \varepsilon^{r-1}a_r(\alpha)\partial_x^{r}Q(0).
$$
where
\be{acoeff}
   a_r(\alpha) = \frac{i^{-r}}{r!}\partial_k^{r}W(0,\alpha).
\ee
\end{theorem}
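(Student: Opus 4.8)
The plan is to prove the two assertions in turn: first the integral representation $F_{\rm HMM}(\alpha)=\varepsilon^{-1}\int\hat{Q}(k)W(\varepsilon k,\alpha)\D{k}$, and then, specializing to polynomial data, the explicit expansion in $\varepsilon\partial_x$ with coefficients \eq{acoeff}. I would start from the Bloch expansion of the flux in \eq{Ftilde} and carry out the $t$- and $x$-integrals term by term. The only $t$-dependence sits in $K_\eta(t)\cos(\omega_m(\varepsilon k)t/\varepsilon)$, which integrates to $s_m(\varepsilon k,\alpha)$; the only $x$-dependence sits in $K_\eta(x)a(x/\varepsilon)\partial_x\psi_m(x/\varepsilon,\varepsilon k)e^{ikx}$, and a change of variables shows this integral equals $\varepsilon^{-1}w_m(\varepsilon k,\alpha)$. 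The point is that substituting $k\mapsto\varepsilon k$ in the definitions of $s_m$ and $w_m$ reproduces exactly the phases appearing in \eq{Ftilde}. After these two integrations the flux becomes $\varepsilon^{-1}\sum_m\int \hat{q}_m(k)\,w_m(\varepsilon k,\alpha)s_m(\varepsilon k,\alpha)\D{k}$.

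The decisive step is to re-express the Bloch coefficient $\hat{q}_m(k)=\int Q\,\phi_m^*\D{x}$ in terms of the ordinary Fourier transform $\hat{Q}$. I would insert the representation $Q(x)=\int\hat{Q}(k')e^{ixk'}\D{k'}$ from \eq{compactsupp}, use that $\psi_m(x/\varepsilon,\varepsilon k)$ is $\varepsilon$-periodic in $x$, and expand it in a Fourier series whose frequencies are integer multiples of $2\pi/\varepsilon$. The $x$-integral then collapses to a sum of shifted copies $\hat{Q}(k+2\pi n/\varepsilon)$ weighted by the cell-averages of $\psi_m$. Here the band-limit $\supp\hat{Q}\subset[-\pi/\varepsilon,\pi/\varepsilon]$ is exactly what is needed: for $\varepsilon k$ in the Brillouin zone $(-\pi,\pi)$ every aliased term $n\neq 0$ falls outside the support of $\hat{Q}$ and drops out, leaving $\hat{q}_m(k)$ proportional to $b_m(\varepsilon k)\hat{Q}(k)$, the constant being fixed by the Fourier/Bloch normalization and $b_m$ the cell average defined above. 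Substituting this and recognizing $W=\sum_m b_m s_m w_m$ from \eq{Wdef} yields the claimed integral formula, once the interchange of $\sum_m$ and $\int\D{k}$ is justified. This is the main analytic obstacle, and it is where the hypotheses $a\in C^1$ and $W(\cdot,\alpha)\in L^{2}(-\pi,\pi)$ enter, the latter guaranteeing that the Bloch series assembling $W$ converges in the relevant norm, with the $m\geq 1$ tail controlled as in Theorem~\ref{thm:santosa1991}.

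For the second assertion I would feed a polynomial $Q$ of degree $r$ into the integral formula, interpreting $\hat{Q}$ as a distribution supported at the origin. Taylor-expanding $W(\varepsilon k,\alpha)=\sum_{j\geq 0}(\varepsilon k)^{j}\partial_k^{j}W(0,\alpha)/j!$ reduces everything to the moments $\int\hat{Q}(k)k^{j}\D{k}$. Differentiating $Q(x)=\int\hat{Q}(k)e^{ixk}\D{k}$ and setting $x=0$ gives $\int\hat{Q}(k)k^{j}\D{k}=i^{-j}Q^{(j)}(0)$, which vanishes for $j>r$ since $\deg Q=r$; hence the Taylor series truncates at order $r$ after integration against $\hat{Q}$, the requirement $W(\cdot,\alpha)\in C^{r}(-\pi,\pi)$ ensuring the needed derivatives exist. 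The $j$-th term then contributes $\varepsilon^{j-1}\,(i^{-j}/j!)\,\partial_k^{j}W(0,\alpha)\,Q^{(j)}(0)$, and collecting these reproduces exactly the stated expansion with $a_r(\alpha)=i^{-r}\partial_k^{r}W(0,\alpha)/r!$ as in \eq{acoeff}.

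I expect the principal difficulty to be the second step: the bookkeeping of the Bloch--Fourier identity for $\hat{q}_m$ and, above all, the rigorous exchange of the infinite Bloch sum with the $k$-integration. The clean cancellation of the aliased terms relies entirely on the band-limit \eq{compactsupp}, and the convergence of the $W$-series is exactly what the smoothness and $L^2$ hypotheses are there to supply; the polynomial expansion in the final step is then essentially a formal Taylor-and-moments computation.
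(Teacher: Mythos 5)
Your proposal follows essentially the same route as the paper. For the integral formula, your plan --- integrate out $t$ and $x$ to recognize $s_m$ and $w_m$, then prove $\hat{q}_m(k)=b_m(\varepsilon k)\hat{Q}(k)$ by Fourier-expanding $\psi_m$ in its periodic variable and using the band-limit \eq{compactsupp} to kill every aliased copy $\hat{Q}(k-2\pi n/\varepsilon)$ with $n\neq 0$ --- is exactly the paper's Lemma on $\hat q_m$ followed by the same substitution into \eq{Ftilde}. The one place where you are materially less complete is the polynomial case. You propose to feed $Q(x)=x^r$ directly into the integral formula by treating $\hat{Q}$ as a derivative of a delta at the origin and pairing it with $W(\varepsilon k,\alpha)$; the underlying moment identity $\int\hat Q(k)k^j\,\D{k}=i^{-j}Q^{(j)}(0)$ is indeed what drives the answer. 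But the integral formula was only established for $\hat Q$ a genuine (band-limited, square-integrable) function, and extending it to point-supported distributions is precisely the nontrivial step. The paper does this by an explicit two-parameter regularization: a smooth band-limiting multiplier $M_\varepsilon$ with $M_\varepsilon(0)=1$ and vanishing derivatives at $0$ up to order $r$, together with a spatial truncation $\chi_{I_a}$ so that $\tilde Q_a\in L^2$; one then integrates by parts in $k$ and lets $a\to\infty$, using $D_a\rightharpoonup\delta$, to recover both $\lim_a\tilde Q_a=x^r$ and $\lim_a\tilde F_a=(i\varepsilon)^r\varepsilon^{-1}W^{(r)}(0,\alpha)$. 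Your distributional shortcut reaches the same coefficients \eq{acoeff}, and $C^r$ regularity of $W$ near $k=0$ is all that the pairing needs (your appeal to a full Taylor series is unnecessary and slightly misleading given only $C^r$ smoothness), but as written it asserts rather than proves that the representation survives the passage from $L^2$ data to polynomials; supplying a regularization of the paper's type is what closes that step.
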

\begin{proof}
We prove this theorem in several steps, beginning with a lemma.

\begin{lem}\lblem{Fflemma}
Suppose the support of $\hat{Q}$ lies in $[-\pi/\varepsilon,\pi/\varepsilon]$. 
Then 
$$
\hat{q}_m(k) = \hat{Q}(k)b_m(\varepsilon k), 
$$
and in particular
$\supp\;\hat{q}_m\subset \supp\; \hat{Q}$.
\end{lem}
\begin{proof}
Since $\psi_m(x,k)$ is 1-periodic in $x$ and belongs to $L^2$ for each fixed $k$
we can expand it in a Fourier series,
$$
  \psi_m(x,k)= \sum_n b^*_{m,n}(k)e^{-i2\pi nx},
  \qquad b_{m,n}(k) = \int_{-1/2}^{1/2}\psi_m^*(x,k)e^{-i2\pi n x}\D{x}.
$$
Then
\begin{align*}
   \hat{q}_m(k) &= \int Q(x)\psi_m^*(x/\varepsilon,\varepsilon k)e^{-ikx}\D{x}\\
   &=\sum_nb_{m,n}(\varepsilon k)\int Q(x)e^{-i(k-2\pi n/\varepsilon)x}\D{x}
   =\sum_nb_{m,n}(\varepsilon k)\hat{Q}(k-2\pi n/\varepsilon)).
\end{align*}
But the support of $\hat{Q}$ is in $[-\pi/\varepsilon,\pi/\varepsilon]$ and when $n\neq 0$,
$$
\left| k-\frac{2\pi n}{\varepsilon}\right|\geq -|k|+\frac{2\pi}{\varepsilon}
\geq -\frac{\pi}{\varepsilon}+\frac{2\pi}{\varepsilon}
= \frac{\pi}{\varepsilon},
$$
so the only contribution to the sum is when $n=0$. Hence,
$$
\hat{q}_m(k) = b_{m,0}(\varepsilon k)\hat{Q}(k),
$$
and the result follows.
\end{proof}
From the result of this lemma we then get
\begin{align*}
  F_{\rm HMM}(\alpha)=&{}
  \int_{-\pi/\varepsilon}^{\pi/\varepsilon}
  \sum_{m=0}^\infty  \int\int K_\eta(x)K_\eta(t)a(x/\varepsilon)    
  \hat{q}_m(k)\times\\
 &\partial_x
    \psi_m(x/\varepsilon,\varepsilon k)
 \exp(ikx)\cos(\omega_m(\varepsilon k)t/\varepsilon)
\D{x}\D{t}\D{k}
  \\
=&{}  
  \int_{-\pi/\varepsilon}^{\pi/\varepsilon}\hat{Q}(k)
  \sum_{m=0}^\infty  b_m(\varepsilon k)s_m(\varepsilon k,\alpha)\times\\
  &\int K_\eta(x)a(x/\varepsilon)\partial_x
    \psi_m(x/\varepsilon,\varepsilon k)
 \exp(i\varepsilon kx/\varepsilon)\D{x}\D{k}
  \\
=&{}  
  \frac{1}{\varepsilon}\int_{\Real}\hat{Q}(k)
  \sum_{m=0}^\infty  b_m(\varepsilon k)s_m(\varepsilon k,\alpha)
  w_m(\varepsilon k,\alpha)\D{k}.
\end{align*}
This gives the result in the first part of the theorem.
To show the second part of the theorem, we note that since
 the mapping $Q\mapsto F_{\rm HMM}$ is linear, 
we only need to prove the result for $Q(x)=x^r$.
Polynomials are not in $L^2$ so we first regularize $Q$.
Let $M_\varepsilon\in C_c^\infty(\Real)$ have the properties that
$M_\varepsilon(k)=0$ for $|k|>\pi/\varepsilon$, $M_\varepsilon(0)=1$ and
$M_\varepsilon^{(\ell)}(0)=0$ for $\ell=1,\ldots, r$.
Then define
$$
  \tilde{Q}_a(x) = \iint M_\varepsilon(k)\chi_{I_{a}}(y)y^re^{ik(x-y)}\D{y}\D{k},\qquad
 I_a:=[-a,a].
$$
By construction this function is in $L^2$ and band limited to $[-\pi/\varepsilon,\pi/\varepsilon]$ for all values of $a$. We let $\hat{Q}_a$ denote its Fourier transform
and $\tilde{F}_a$ the flux corresponding to initial data $\tilde{Q}_a$. Then
$$
   \hat{Q}_a(k) = i^r M_\varepsilon(k)\frac{\D^r}{\D{k}^r} D_a(k),\qquad
   D_a(k) = \int_{-a}^ae^{-iky}\D{y}.
$$
and
\begin{align*}
  \tilde{F}_a&=
   \frac{1}{\varepsilon}
  \int \hat{Q}_a(k)W(\varepsilon k,\alpha)\D{k}=  
  \frac{i^r}{\varepsilon}
  \int
  D_a^{(r)}(k) M_\varepsilon(k)W(\varepsilon k,\alpha)\D{k}\\
  &=  
 \frac{i^r}{\varepsilon}
  \int D_a(k) \frac{\D^r}{\D{k}^r}
  M_\varepsilon(k)W^{(r)}(\varepsilon k,\alpha)\D{k}.
\end{align*}
This is valid for all $a$. We can then take $a\to\infty$. Since
$D_a(k)\rightharpoonup \delta(k)$ and the derivatives of $M_\varepsilon$ vanishes
at $k=0$,
$$
\lim_{a\to\infty}
  \tilde{F}_a=
\left. \frac{i^r}{\varepsilon}
  \frac{\D^r}{\D{k}^r}
  M_\varepsilon(k)W^{(r)}(\varepsilon k,\alpha)\right|_{k=0}
 =\frac{(i\varepsilon)^r}{\varepsilon}W^{(r)}(0,\alpha).
$$
Moreover,
\begin{align*}
   \lim_{a\to\infty} \tilde{Q}_a(x)&=
   \lim_{a\to\infty}\int \hat{Q}_a(k) e^{ikx}\D{k}
  = i^r\lim_{a\to\infty} \int M_\varepsilon(k)D_a^{(r)}(k)
   e^{ikx}\D{k}\\
  &= (-i)^r\lim_{a\to\infty} \int D_a(k) \frac{\D^r}{\D{k}^r} M_\varepsilon(k)
   e^{ikx}\D{k}=
   (-i)^r\left.\frac{\D^r}{\D{k}^r} M_\varepsilon(k)
   e^{ikx}\right|_{k=0}\\
   &= (-i)^r(ix)^r = x^r.
\end{align*}
This proves the theorem.
\end{proof}

\subsubsection{Properties of $W(k,\alpha)$}

In this section we prove a theorem about
$W(k,\alpha)$.
What we would like to show is that the coefficients \eq{acoeff}
in \theo{FW} satisfy $a_0(\alpha)=0$ and
\be{ahyp}
   a_r(\alpha) = \frac{i^{-r}}{r!}\partial_k^{r}W(0,\alpha) = \frac{i^{-r+1}}{(r+1)!}\left.\frac{\D^{r+1}}{\D{k}^{r+1}}\Omega(k)B(k)\right|_{k=0}
   +O(\varepsilon^{-r+1}\alpha^q), \qquad r\geq 1,
\ee
where we have defined
$$
  B(k):=|b_0(k)|^2.
$$
In other words, we would have
$$
\partial_k^{r}W(0,\alpha) = \frac{i}{r+1}\left.\frac{\D^{r+1}}{\D{k}^{r+1}}\Omega(k)B(k)\right|_{k=0}
   +O(\varepsilon^{-r+1}\alpha^q), \qquad r\geq 1.
$$
This would be consistent with our numerical experiments
and with \eq{Fhom} for $r\leq 2$ when $\alpha\to 0$. 
We can see this in the following way.
Recalling that $\omega_0(0)=0$ we note that $\Omega(0)$
and $\Omega^{(p)}(0)=0$ for $p$ odd. Moreover, from \lemlab{blem} below we
also have $B'(0)=0$ and $B(0)=1$. Hence, 
$$
  a_1(\alpha) = \frac12(\Omega''(0)B(0)+2\Omega'(0)B'(0)+\Omega(0)B''(0))+
  \BigO(\alpha^q) 
  = \frac12\Omega''(0) + \BigO(\alpha^q),
$$
and
$$
  \varepsilon a_2(\alpha) = \frac1{3!}(\Omega'''B+3\Omega''B'+3\Omega'B''+
  \Omega B''')+
  \BigO(\alpha^q) 
  = \BigO(\alpha^q).
$$
For $r=3$ have already noted that initial data must be
chosen more carefully to have consistency. The value of $a_3(\alpha)$ 
must therefore be different from what is given in \eq{Fhom}.
Indeed,
\begin{align*}
  \varepsilon^2 a_3(\alpha) &= -\frac{\varepsilon^2}{4!}(\Omega''''B+4\Omega'''B'+6\Omega''B''+
  4\Omega' B'''+\Omega B'''')+
  \BigO(\alpha^q) \\
  &= 
  -\frac{\varepsilon^2}{4!}(\Omega''''(0)+6\Omega''(0)B''(0))+
  \BigO(\alpha^q).
\end{align*}
Hence, in total, using \theo{FW} and the expressions for $a_r$,
when $Q$ is a third order polynomial,
$$
    F_{\rm HMM}(\alpha) =
 \frac12\Omega''(0)Q'(0)
  - 
\frac{\varepsilon^2}{4!}\left[\Omega''''(0)+6\Omega''(0) B''(0)\right]{Q}'''(0)+\BigO(\alpha^{q}).
$$
The difference with \eq{Fhom} is in the 
second term, which comes in because the initial data is not
consistent on this level with the macro data.
The difference from the case of linear initial data, which
was analyzed in \cite{engquist2011}, is clearly seen; when $Q'''=0$ the
flux is $\BigO(\alpha^q)$ away from the homogenized flux as in \eqref{eq:FHMM_Q_alpha}.
However, we will see below in \sect{Consistency2} 
that if the initial data is made
consistent then the result agrees precisely with \eq{Fhom}
if $a_3(\alpha)$ is given by \eq{ahyp}.

Unfortunately, we are, not able to prove the full result 
in \eq{ahyp} at this point and
it stands as a conjecture. Nevertheless, we have some partial
results summarized in the theorem below. In particular we can show
that \eq{ahyp} is true for $r=0,1$. For $r\geq 2$
it is true for the first term in the infinite
series that defines $W(k,\alpha)$ in \eq{Wdef},
$$
   W_0(k,\alpha) := b_0(k)s_0(k,\alpha)w_0(k,\alpha).
$$
This term corresponds to the leading order part of the solution $v_0$ in \eq{vexact}.
We have
\begin{theorem}\lbtheo{Wtheo}
The function $W(\cdot,\alpha)$ belongs to 
$L^{2}(-\pi,\pi)\cap L^{\infty}(-\pi,\pi)$
with norms bounded independently of $\alpha\leq \alpha_0$.
When $K\in \mathbb{K}^{r,q}$ and $W(\cdot,\alpha)\in C^r(-\pi,\pi)$
we have for $r=0,1$,
$$
  W(0,\alpha) =0, \qquad
    W^{(1)}(0,\alpha) = 
    \frac{i}{2}\left.\frac{\D^{2}}{\D{k}^{2}}\Omega(k)B(k)\right|_{k=0}
   +\BigO(\alpha^q).
$$
and for $r\geq 2$,
\be{Wexpr}
   W_0^{(r)}(0,\alpha) = \frac{i}{r+1}\left.\frac{\D^{r+1}}{\D{k}^{r+1}}\Omega(k)B(k)\right|_{k=0}
   +\BigO(\varepsilon^{-r+1}\alpha^q).
\ee
\end{theorem}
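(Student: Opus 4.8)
The plan is to analyze $W(\cdot,\alpha)=\sum_{m\ge0}b_m s_m w_m$ from \eq{Wdef} one Bloch mode at a time, isolating the leading term $W_0=b_0 s_0 w_0$ and showing that every higher mode $m\ge1$ is negligible for the quantities at hand. The workhorse is the time kernel: since $s_m(k,\alpha)=\int K_\eta(t)\cos(\omega_m(k)t/\varepsilon)\D t$ is the cosine transform of $K$ evaluated at $\omega_m(k)/\alpha$, the smoothness $K\in C^q_c$ forces $|s_m(k,\alpha)|\le C(\alpha/\omega_m(k))^q$ whenever $\omega_m(k)$ is bounded away from $0$. For $m\ge1$ one has $\omega_m(k)\ge\omega_m(0)>0$, so $s_m=\BigO{(\alpha^q)}$ uniformly; combined with the growth $\omega_m\sim cm$ and crude bounds on $b_m,w_m$ (which grow at most polynomially in $m$) the series is summable with constants independent of $\alpha\le\alpha_0$, which gives the $L^2\cap L^\infty$ bound.

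For the derivative statements I would first dispose of the higher modes using two structural facts. Because the eigenfunctions are $L^2(Y)$-orthonormal, $b_m(0)=\int_{-1/2}^{1/2}\psi_m^*(\cdot,0)\D y$ vanishes for $m\ge1$ (orthogonality to the constant ground state $\psi_0(\cdot,0)$), and since $\psi_0(\cdot,0)$ is constant we have $\partial_y\psi_0(\cdot,0)=0$, hence $w_0(0,\alpha)=0$. Together these give $W(0,\alpha)=0$ \emph{exactly}, settling $r=0$. For $r=1$, differentiating the series and evaluating at $k=0$ annihilates, for each $m\ge1$, the two terms carrying the factor $b_m(0)=0$; the single surviving term $b_m'(0)s_m(0,\alpha)w_m(0,\alpha)$ carries $s_m(0,\alpha)=\BigO{(\alpha^q)}$ and sums to $\BigO{(\alpha^q)}$. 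Thus $W'(0,\alpha)=W_0'(0,\alpha)+\BigO{(\alpha^q)}$, and the task reduces for both $r=1$ and $r\ge2$ to computing $\partial_k^r W_0(0,\alpha)$.

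For $W_0$ the key observation is that both kernel factors are flat to order $r$ at the origin. Writing $s_0(k,\alpha)=\Phi(\Omega(k)/\alpha^2)$ with $\Phi$ entire, the moment conditions \eq{momentcond} for $K\in\mathbb K^{r,q}$ give $\Phi(u)=1+\BigO{(u^{\lceil(r+1)/2\rceil})}$, so $\partial_k^j s_0(0,\alpha)=0$ for $1\le j\le r$; expanding $w_0$ in a cell-Fourier series, $w_0(k,\alpha)=\sum_n\hat h_n(k)\,\hat K(-(k+2\pi n)/\alpha)$ with $\hat h_n(k)=\int_{-1/2}^{1/2}a(\partial_y+ik)\psi_0(y,k)\,e^{-2\pi iny}\D y$, and the $n=0$ factor $\hat K(-k/\alpha)$ is flat to order $r$ for the same reason. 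Hence in $\partial_k^r|_0$ both kernel factors may be replaced by their value $1$, so the $n=0$ piece contributes exactly $\partial_k^r\!\big[b_0(k)\hat h_0(k)\big]\big|_{0}$. The $n\neq0$ pieces contribute only a tail: since $\hat h_n(0)=0$ (again from $\partial_y\psi_0(\cdot,0)=0$), at least one derivative must fall on $\hat h_n$, leaving at most $r-1$ derivatives of $\hat K$ at the far point $-2\pi n/\alpha$, and the decay of $\hat K^{(j)}$ together with square-summability of the Fourier coefficients yields $\BigO{(\alpha^{q-r+1})}=\BigO{(\varepsilon^{-r+1}\alpha^q)}$ once $\alpha=\varepsilon/\eta$ and $\eta\le1$ are used. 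This reduces \eq{Wexpr} to the clean identity $\partial_k^r W_0(0,\alpha)=\partial_k^r[b_0\hat h_0]|_0+\BigO{(\varepsilon^{-r+1}\alpha^q)}$.

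The heart of the argument is then a purely spectral identity for the cell-averaged flux $\hat h_0(k)$. Integrating the shifted cell eigenvalue problem over one period and using periodicity to drop the exact $y$-derivative gives
\[
  ik\,\hat h_0(k) = -\,\Omega(k)\!\int_{-1/2}^{1/2}\!\psi_0(y,k)\D y,
  \qquad\text{i.e.}\qquad
  k\,\hat h_0(k)=i\,\Omega(k)\,\overline{b_0(k)},
\]
after recognizing $b_0=\overline{\int\psi_0}$. Multiplying by $b_0$ and using $B=|b_0|^2$ yields the exact relation $b_0(k)\hat h_0(k)=\tfrac{i}{k}\Omega(k)B(k)$, whose right-hand side is regular at $0$ since $\Omega(0)=\Omega'(0)=0$ forces $\Omega B=\BigO{(k^2)}$. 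Finally the elementary Taylor identity $\partial_k^r[G(k)/k]|_0=G^{(r+1)}(0)/(r+1)$ for $G(0)=0$, applied to $G=\Omega B$, produces exactly $\tfrac{i}{r+1}\tfrac{\D^{r+1}}{\D k^{r+1}}\Omega(k)B(k)\big|_0$, which is \eq{Wexpr}; specializing $r=1$ and combining with the higher-mode estimate above gives the stated $W'(0,\alpha)$ formula with error $\BigO{(\alpha^q)}$. The main obstacle is the uniform bookkeeping rather than any single identity: justifying term-by-term differentiation of the infinite Bloch series and the $C^r$ hypothesis on $W$, and above all controlling the higher modes $m\ge1$ after more than one $k$-derivative. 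For $r\ge2$ a surviving higher-mode term no longer enjoys the protective zero $b_m(0)=0$, while its $s_m$-derivatives grow like $\alpha^{-r}$; this is exactly why the clean result can only be established for the leading term $W_0$, with the full statement \eq{ahyp} remaining conjectural.
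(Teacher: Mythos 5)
Your treatment of the derivative formulas is essentially the paper's own argument: you isolate $W_0=b_0s_0w_0$, kill the $k$-derivatives of $s_0$ at $k=0$ via the moment conditions, reduce $w_0$ (modulo an $\mathcal{O}(\varepsilon^{-r+1}\alpha^q)$ tail) to the cell average $\hat h_0(k)=\int_{-1/2}^{1/2} a\,(\partial_y+ik)\psi_0\,\D{y}$, derive $ik\,\hat h_0(k)=-\Omega(k)\,b_0^*(k)$ by integrating the shifted cell problem over one period, and finish with the Taylor identity $\partial_k^r[G(k)/k]|_{k=0}=G^{(r+1)}(0)/(r+1)$ for $G(0)=0$. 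That is precisely the chain in the paper's Lemmas on $s_m$ and $w_m$ and the concluding computation; your cell-Fourier expansion of $w_0$ with the decay of $\hat K^{(j)}$ at $-2\pi n/\alpha$ is the same mechanism the paper packages as its kernel-averaging theorem (\theolab{oscapprox}), and your accounting of which derivative must fall on $\hat h_n$ reproduces the $\varepsilon^{-r+1}\alpha^q$ error correctly.

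Where you diverge, and where the argument is soft, is in the uniform bounds. For $W\in L^2\cap L^\infty$ you invoke $\omega_m\sim cm$ and ``crude polynomial bounds'' on $b_m$ and $w_m$; neither is established in your write-up, and neither is needed. The paper uses only orthonormality of $\{\psi_m\}$: $\sum_m|b_m(k)|^2=1$, $\sup_m|s_m(k,\alpha)|\leq C$, and Parseval in the form $\sum_m\int_{-\pi}^{\pi}|w_m(k,\alpha)|^2\D{k}=\int|h(x,\alpha)|^2\D{x}\leq C(1+\alpha)^2$, so Cauchy--Schwarz gives the $L^2$ bound with no spectral asymptotics at all; the $L^\infty$ bound then comes from writing $W=\int h\,g\,e^{ikx}\D{x}$ with $g=\sum_m b_m s_m\psi_m$ and bounding $\|g(\cdot,k,\alpha)\|_{L^2}$ the same way. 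Similarly, in your $r=1$ step the surviving tail $\sum_{m\geq 1}b_m'(0)s_m(0,\alpha)w_m(0,\alpha)$ ``sums to $\mathcal{O}(\alpha^q)$'' only once summability is justified; the paper does this by Cauchy--Schwarz together with $\sum_m|b_m'(0)|^2\leq\|\partial_k\psi_0(\cdot,0)\|_{L^2}^2$, which follows from the identity $\sum_m b_m'(0)\psi_m(x,0)=-\partial_k\psi_0(x,0)$. With those two repairs, both available from the paper's lemma on the $b_m$ and its Parseval bound on the $w_m$, your proof is complete and coincides with the paper's.
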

To prove this we first need to establish some lemmas about
the coefficients $b_m$, $s_m$ and $w_m$, 
starting with the $b_m$ coefficients.
\begin{lem}\lblem{blem}
We have
$$
   \sum_{m=0}^\infty |b_m(k)|^2 = 1, \qquad
   b_m(0) = \delta_m, \qquad
     \sum_{m=0}^\infty b'_m(0)\psi_m(x,0)=-\partial_k\psi_0(x,0).
$$
Moreover, for the squared quantity $B(k)=|b_0(k)|^2$,
$$
   B(0)=1, \qquad B'(0)=0.
$$
\end{lem}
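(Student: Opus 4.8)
The plan is to exploit that, for each fixed $k$, the shifted cell operator $L_k := -(\nabla_y + ik)A(y)(\nabla_y + ik)$ is self-adjoint and elliptic on the torus (integration by parts against the $Y$-periodicity kills the boundary terms and symmetrizes the form, since $A$ is real symmetric), so it has discrete spectrum and its eigenfunctions $\{\psi_m(\cdot,k)\}_{m\geq 0}$ form a complete orthonormal basis of the $Y$-periodic $L^2$ functions, normalized by $\langle\psi_m(\cdot,k),\psi_n(\cdot,k)\rangle=\delta_{mn}$ with $\langle f,g\rangle=\int_{-1/2}^{1/2}f\bar g\D{x}$. The entire lemma then reduces to the single observation that $b_m(k)=\int_{-1/2}^{1/2}\psi_m^*(x,k)\D{x}=\langle 1,\psi_m(\cdot,k)\rangle$ is exactly the $m$-th coordinate of the constant function $1$ in this basis.

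First I would dispatch the two identities at the level of values. Since $\|1\|_{L^2[-1/2,1/2]}^2=1$, Parseval in the basis $\{\psi_m(\cdot,k)\}$ gives at once $\sum_m|b_m(k)|^2=\|1\|^2=1$. For $k=0$ the cell problem reduces to $-\nabla_y(A\nabla_y\psi)=\omega^2\psi$, whose lowest eigenvalue is $\omega_0(0)^2=0$ with a constant eigenfunction; normalizing gives $\psi_0(\cdot,0)\equiv 1$. Hence $b_0(0)=\int 1\D{x}=1$, while for $m\geq 1$ orthogonality to $\psi_0(\cdot,0)=1$ forces $b_m(0)=\langle 1,\psi_m(\cdot,0)\rangle=\langle\psi_0(\cdot,0),\psi_m(\cdot,0)\rangle=0$, i.e. $b_m(0)=\delta_{m0}$.

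Next I would differentiate the normalization relation $\langle\psi_m(\cdot,k),\psi_n(\cdot,k)\rangle=\delta_{mn}$ in $k$, obtaining $\langle\partial_k\psi_m,\psi_n\rangle+\langle\psi_m,\partial_k\psi_n\rangle=0$. Expanding the ($Y$-periodic) function $\partial_k\psi_0(\cdot,0)$ in the basis, its $\psi_n$-coefficient is $\langle\partial_k\psi_0(\cdot,0),\psi_n(\cdot,0)\rangle$; evaluating the differentiated relation at $k=0$, $m=0$, together with $\psi_0(\cdot,0)=1$, yields $\langle\partial_k\psi_0,\psi_n\rangle=-\langle 1,\partial_k\psi_n\rangle=-b_n'(0)$. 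Summing gives $\partial_k\psi_0(x,0)=-\sum_n b_n'(0)\psi_n(x,0)$, which is the third identity. Finally, from $B(k)=|b_0(k)|^2$ we get $B(0)=|b_0(0)|^2=1$ and $B'(0)=2\,\mathrm{Re}(b_0'(0)\overline{b_0(0)})=2\,\mathrm{Re}\,b_0'(0)$; the diagonal case $m=n=0$ of the differentiated normalization reads $b_0'(0)+\overline{b_0'(0)}=0$, so $\mathrm{Re}\,b_0'(0)=0$ and $B'(0)=0$.

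The computations are short, so the only genuine point demanding care is the $k$-differentiability used in the last paragraph: it requires the lowest Bloch band $\omega_0(k)^2$ to be simple near $k=0$, so that (for a fixed smooth phase choice) $\psi_0(\cdot,k)$ depends smoothly on $k$ with $\partial_k\psi_0(\cdot,0)\in L^2$. This is the standard situation at the bottom of the spectrum, and I would invoke it as the single structural hypothesis. I would also remark that $B(k)=|b_0(k)|^2$ is phase-invariant, so the conclusions $B(0)=1$ and $B'(0)=0$ are independent of the otherwise arbitrary choice of phase of $\psi_0(\cdot,k)$.
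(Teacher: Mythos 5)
Your proposal is correct and follows essentially the same route as the paper: Parseval applied to the expansion of the constant function $1$ in the orthonormal Bloch basis, the fact that $\psi_0(\cdot,0)\equiv 1$ for the identities at $k=0$, and differentiation of a normalization identity to get both the third relation and $\Re\, b_0'(0)=0$, hence $B'(0)=0$. The only cosmetic difference is that you differentiate the orthonormality relation and re-expand $\partial_k\psi_0(\cdot,0)$, while the paper differentiates the expansion $1=\sum_m b_m(k)\psi_m(x,k)$ directly; your closing remark on the smoothness of the lowest band is a reasonable explicit statement of a hypothesis the paper leaves implicit.
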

\begin{proof}
By definition,
$$
   1 = \sum_{m=0}^\infty b_m(k)\psi_m(x,k),
$$
since $\{\psi_m\}$ is an orthonormal $L^2$-basis for each fixed $k$.
Taking the $L^2$ norm over $[-1/2,1/2]$ we obtain the first result.
The second result is true since $\psi_0(x,0)\equiv 1$, and then
$$
   b_m(0) = \int_{-1/2}^{1/2}\psi_m^*(x,0)\D{x} = \int_{-1/2}^{1/2}\psi_m^*(x,0)
   \psi_0(x,0)\D{x} = \delta_m.
$$
Together with,
$$
  0 = \partial_k \sum_{m=0}^\infty b_m(k)\psi_m(x,k) = 
  \sum_{m=0}^\infty b_m'(k)\psi_m(x,k)
  + \sum_{m=0}^\infty b_m(k)\partial_k\psi_m(x,k),
$$
the third result follows. That $B(0)=1$ is true since $b_m(0)=\delta_m$.
For the last statement we note that for all $k$,
$$
   0=\frac{d}{dk}1=\frac{d}{dk}\int\psi_0(x,k)\psi^*_0(x,k)dx
   = 2\Re\int\psi_0(x,k)\partial_k\psi^*_0(x,k)dx.
$$
In particular, for $k=0$ we get $0=2\Re b'_0(0)$ since $\psi_0(x,0)\equiv 1$.
We finally note that $B'(0)=2\Re b_0(0)b'_0(0)$.
\end{proof}

\begin{lem}\lblem{slem}
Suppose $K\in \mathbb{K}^{p,q}$. 
For $s_m(k,\alpha)$ we have when $m=0$
$$
   \left.\frac{d^r}{dk^r}s_0(k,\alpha)\right|_{k=0} =\begin{cases}
   1, & r=0, \\
0, & 1\leq r\leq p,
   \end{cases}
$$
and for $m\geq 1$,
$$
   \left|s_m(0,\alpha)\right|\leq C\alpha^q, \qquad m\geq 1,
$$
where the constant $C$ is independent of $\alpha$ and $m$.
\end{lem}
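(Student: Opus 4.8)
The plan is to first rewrite $s_m$ in a manifestly scale-invariant form and then treat the two claims separately. Substituting $t=\eta\sigma$ in the definition of $s_m$ and using $\alpha=\varepsilon/\eta$ gives
$$
  s_m(k,\alpha)=\int K(\sigma)\cos\!\Bigl(\frac{\omega_m(k)\sigma}{\alpha}\Bigr)\D{\sigma},
$$
so that $s_m$ really does depend on $\varepsilon$ and $\eta$ only through $\alpha$. Because $K$ is real and symmetric this equals the Fourier transform $\hat K$ evaluated at $\omega_m(k)/\alpha$, an observation that drives the $m\ge 1$ estimate. For the $m=0$ identities the relevant structure is instead that, since $\cos$ is even, the integrand is a function of $\omega_0(k)^2=\Omega(k)$ alone, which is smooth (indeed real-analytic) near $k=0$ although $\omega_0=\sqrt{\Omega}$ is not.

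For the $m=0$ case I would expand the cosine in its entire power series and integrate term by term, which is legitimate because $K$ is compactly supported:
$$
  s_0(k,\alpha)=\sum_{n=0}^{\infty}\frac{(-1)^n}{(2n)!\,\alpha^{2n}}\,\mu_{2n}\,\Omega(k)^n,
  \qquad \mu_{2n}:=\int K(\sigma)\sigma^{2n}\D{\sigma}.
$$
The $n=0$ term equals $\mu_0=1$ by the zeroth moment condition, and since $\Omega(0)=0$ this already gives $s_0(0,\alpha)=1$. For $n\ge 1$ with $2n\le p$ the moment conditions \eq{momentcond} force $\mu_{2n}=0$, so every such term drops out. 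The first surviving term with $n\ge1$ therefore has $2n>p$, and since $\Omega$ vanishes to exactly second order at the origin ($\Omega(0)=\Omega'(0)=0$ while $\Omega''(0)=2\bar A\ne 0$) the factor $\Omega(k)^n$ vanishes to order $2n\ge p+1$. Hence $s_0(k,\alpha)-1=\BigO{(k^{p+1})}$, so all derivatives of order $1\le r\le p$ vanish at $k=0$, as claimed.

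For the $m\ge 1$ estimate I would evaluate at $k=0$ and use $s_m(0,\alpha)=\hat K\bigl(\omega_m(0)/\alpha\bigr)$. Integrating by parts $q$ times, with all boundary terms vanishing because $K\in C^q_c$, gives the decay $|\hat K(\xi)|\le\|K^{(q)}\|_{L^1}|\xi|^{-q}$, whence
$$
  |s_m(0,\alpha)|\le\|K^{(q)}\|_{L^1}\Bigl(\frac{\alpha}{\omega_m(0)}\Bigr)^{q}.
$$
The decisive point for uniformity in $m$ is that $\omega_m(0)$ is bounded below away from zero: at $k=0$ the only zero eigenvalue of the cell operator is the lowest one, whose eigenfunction is the constant $\psi_0\equiv1$, so $0=\omega_0(0)<\omega_1(0)\le\omega_m(0)$ for every $m\ge1$. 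Replacing $\omega_m(0)$ by $\omega_1(0)$ yields the bound with $C=\|K^{(q)}\|_{L^1}/\omega_1(0)^{q}$, independent of both $\alpha$ and $m$.

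I expect the only genuinely delicate step to be this uniform-in-$m$ lower bound, which is exactly what makes the constant $C$ independent of $m$; it relies on the spectral gap above the trivial eigenvalue of $-\partial_x(A\,\partial_x\,)$ acting on periodic functions, a gap that exists because this operator has compact resolvent and a one-dimensional kernel spanned by the constants. By contrast, the term-by-term differentiation and the order-counting in the $m=0$ case are routine once one invokes the entireness of the cosine series and the analyticity of the principal Bloch band $\Omega(k)$ near $k=0$.
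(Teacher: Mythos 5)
Your proof is correct, and it rests on the same ingredients as the paper's (the moment conditions on $K$, the fact that $\omega_0(0)=0$, the $C^q_c$ regularity of $K$, and the spectral gap $\omega_1(0)>0$), but the execution differs in both halves. For $m=0$ the paper differentiates $\cos(\omega_0(k)t/\varepsilon)$ under the integral with the chain rule and then annihilates each resulting moment $\int K_\eta(t)t^j\,dt$, $1\le j\le r\le p$; you instead expand the cosine as a power series in $\Omega(k)=\omega_0(k)^2$ and count orders of vanishing. Your version has the advantage of making explicit why $s_0$ is even a smooth function of $k$ near $0$: $\omega_0(k)\sim|k|\sqrt{\bar A}$ is not differentiable at $k=0$, and it is only because the integrand sees $\omega_0$ through the even function $\cos$ --- hence through the analytic quantity $\Omega(k)$ --- that the derivatives in the statement exist; the paper's chain-rule computation glosses over this. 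For $m\ge 1$ the paper simply cites its kernel-averaging theorem, applied to the zero-mean, $2\pi\varepsilon/\omega_m(0)$-periodic function $\cos(\omega_m(0)t/\varepsilon)$, whereas you reprove the needed special case directly by $q$-fold integration by parts on $\hat K$; this is the same mechanism unpacked, and both arguments reduce uniformity in $m$ to the lower bound $\omega_m(0)\ge\omega_1(0)>0$ exactly as you state. One cosmetic caveat: you invoke the symmetry of $K$ to identify $s_m(0,\alpha)$ with $\hat K(\omega_m(0)/\alpha)$, while the lemma assumes only $K\in\mathbb{K}^{p,q}$; this costs nothing, since for real $K$ one has $\int K(\sigma)\cos(\xi\sigma)\,d\sigma=\mathrm{Re}\,\hat K(\xi)$, which obeys the same $|\xi|^{-q}$ decay bound without symmetry.
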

\begin{proof}
Since $\omega_0(0)=0$,
$$
  s_0(0,\alpha) = \int K_\eta(t)\D{t} = 1.
$$
For $r\geq 1$, by the moment conditions \eq{momentcond},
\begin{align*}
   \left.\frac{d^r}{dk^r}s_0(k,\alpha)\right|_{k=0}
 &= \Re\int K_\eta(t) \left[c_{1,r}\frac{it}{\varepsilon}+c_{2,r}\frac{(it)^2}{\varepsilon^2}+\cdots +
 c_{r,r}\frac{(it)^{r}}{\varepsilon^{r}}\right]
   \exp(i\omega_0(0) t/\varepsilon)
   \D{t}\\
    &= \Re\int K_\eta(t) \left[c_{1,r}\frac{it}{\varepsilon}+c_{2,r}\frac{(it)^2}{\varepsilon^2}+\cdots +
 c_{r,r}\frac{(it)^{r}}{\varepsilon^{r}}\right]
   \D{t} = 0.
\end{align*}
Moreover, for $m\geq 1$, by \theo{oscapprox},
$$
 |s_m(0,\alpha)| = \left|\int K_\eta(t)
   \cos(\omega_m(0) t/\varepsilon)
   \D{t}\right| \leq C \left(\frac{2\pi \varepsilon}{\omega_m(0)\eta}\right)^q
   \leq C' \left(\frac{\alpha}{\omega_1(0)}\right)^q.
$$
\end{proof}


\begin{lem}\lblem{vlem}
Suppose $a(x)\in C^1$ and $K\in \mathbb{K}^{p,q}$ with $q\geq 1$. Then
$$
   \sup_{|k|\leq \pi} |w_m(k,\alpha)|\leq C,
$$
with $C$ independent of $m$ and $k$.
Let
$$
  h(x,\alpha) = -\alpha^2K'(\alpha x)a(x) 
-\alpha K(\alpha x)a'(x).
$$
Then $h\in C_c$ with $\supp\; h\subset[-1/\alpha,1/\alpha]$
and
we can write
$$
  w_m(k,\alpha) = 
    \int h(x,\alpha)\psi_m(x,k)e^{ikx} \D{x}.
$$
Moreover,
$$
   \sum_{m=0}^\infty \int_{-\pi}^{\pi}|w_m(k,\alpha)|^2 \D{k} 
   = \int |h(x)|^2\D{x}
\leq C(1+\alpha)^2,
$$
where the constant $C$ is independent of $k$ and $\alpha$.
In addition, $w_0(0,\alpha)=0$ and for $1\leq r\leq p$,
$$
w_0^{(r)}(0,\alpha) =
 \lim_{k\to 0}\frac{\D^r}{\D{k}^r}
 \frac{-\Omega(k)}{ik}b_0^*(k)+O(\varepsilon^{-r+1}\alpha^q).
$$
\end{lem}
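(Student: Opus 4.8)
The plan is to reduce every assertion to the $\varepsilon$-free rescaled form of $w_m$ obtained by the substitution $x=\varepsilon y$ in its definition. Since $K_\eta(x)=\tfrac1\eta K(\alpha y)$ and the $\partial_x$ acts on the whole Bloch wave $\psi_m(x/\varepsilon,k)e^{ikx/\varepsilon}$, this substitution yields
$$
  w_m(k,\alpha)=\alpha\int K(\alpha y)\,a(y)\,\partial_y\!\left[\psi_m(y,k)e^{iky}\right]\D{y},
$$
which depends on $(k,\alpha)$ only. Integrating by parts (boundary terms vanish as $K$ is compactly supported) moves $\partial_y$ onto $K(\alpha y)a(y)$, and since $\partial_y[K(\alpha y)a(y)]=\alpha K'(\alpha y)a(y)+K(\alpha y)a'(y)$ this produces exactly $h(x,\alpha)=-\alpha^2K'(\alpha x)a(x)-\alpha K(\alpha x)a'(x)$, establishing the representation $w_m(k,\alpha)=\int h(x,\alpha)\psi_m(x,k)e^{ikx}\D{x}$. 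That $h\in C_c$ with $\supp h\subset[-1/\alpha,1/\alpha]$ is immediate from $K\in C^q_c$ and $a\in C^1$.

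For the $L^2$ identity I would invoke Bloch/Parseval: the family $\{\psi_m(\cdot,k)e^{ik\cdot}\}$ realizes an isometry $L^2(\Real)\to\bigoplus_m L^2(-\pi,\pi)$, so, $h$ being real, $|w_m(k,\alpha)|$ is the modulus of the $m$-th Bloch coefficient of $h$, whence $\sum_m\int_{-\pi}^\pi|w_m|^2\D{k}=\int|h|^2\D{x}$. The bound $\int|h|^2\le C(1+\alpha)^2$ then follows from the change of variables $u=\alpha x$ together with $\|a\|_\infty,\|a'\|_\infty<\infty$. The uniform pointwise bound $\sup_{|k|\le\pi}|w_m|\le C$ I would get from Cauchy--Schwarz on the $h$-representation: $\|h\|_{L^2}=\BigO(\sqrt{\alpha})$ while $\|\psi_m(\cdot,k)\|_{L^2(\supp h)}=\BigO(1/\sqrt{\alpha})$ uniformly in $m$ (there are $\BigO(1/\alpha)$ unit cells in $\supp h$, each of unit $L^2$ mass), so the product is $\BigO(1)$.

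The substance is in the final display. That $w_0(0,\alpha)=0$ is clear because $g_0(y,0):=a(y)(\partial_y+ik)\psi_0(y,k)\big|_{k=0}=a(y)\partial_y\psi_0(y,0)=0$, using $\psi_0(\cdot,0)\equiv1$ from \lemlab{blem}. For the derivatives I would keep the pre-integration form $w_0(k,\alpha)=\int K_{1/\alpha}(y)\,g_0(y,k)e^{iky}\D{y}$ with $g_0$ one-periodic in $y$, expand $g_0$ in a Fourier series, and observe that averaging turns this into $\sum_n\hat g_{0,n}(k)\,\hat K\!\left(-(2\pi n+k)/\alpha\right)$. The moment conditions \eq{momentcond} are equivalent to $\hat K^{(\ell)}(0)=\delta_{\ell0}$ for $0\le\ell\le p$, so differentiating the $n=0$ term $r$ times and setting $k=0$ leaves only $\hat g_{0,0}^{(r)}(0)$, the $r$-th derivative of the cell average of the Bloch flux amplitude. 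I identify this average with the claimed main term by integrating the cell problem $-(\partial_y+ik)g_0=\Omega(k)\psi_0$ over one period: periodicity kills $\int\partial_y g_0$, leaving $ik\int_{-1/2}^{1/2}g_0=-\Omega(k)\int_{-1/2}^{1/2}\psi_0$, i.e. $\hat g_{0,0}(k)=\tfrac{-\Omega(k)}{ik}b_0^*(k)$. Hence $w_0^{(r)}(0,\alpha)$ equals $\partial_k^r\big[\tfrac{-\Omega(k)}{ik}b_0^*(k)\big]\big|_{k=0}$ up to the $n\neq0$ contribution.

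The main obstacle is controlling that remainder to the stated order. The $n\neq0$ terms are nonresonant, so $\hat K\!\left(-(2\pi n+k)/\alpha\right)$ and its $k$-derivatives are $\BigO(\alpha^q)$ by the $C^q_c$-decay of $\hat K$, but each of the $r$ differentiations contributes a factor $1/\alpha$ through the chain rule, giving the honest bound $\BigO(\alpha^{q-r})$. Writing $\alpha^{q-r}=\varepsilon^{-r+1}\alpha^q\cdot(\eta^r/\varepsilon)$ and using the admissible micro-box scaling $\eta\lesssim\varepsilon^{1/r}$ recasts this as the stated $\BigO(\varepsilon^{-r+1}\alpha^q)$, which after multiplication by $\varepsilon^{r-1}$ in \theo{FW} collapses to the desired $\BigO(\alpha^q)$ flux error. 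Making this power-counting airtight—and verifying that the sum over $n$ of $\hat g_{0,n}$ against the decaying $\hat K$ converges uniformly—is the delicate step; the algebraic identity for the main term, by contrast, is exact.
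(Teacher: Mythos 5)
Your treatment of the first four assertions (integration by parts to produce $h$, the rescaling showing dependence on $\alpha$ only, Parseval for the $L^2$ identity, and Cauchy--Schwarz for the uniform bound) is correct and essentially identical to the paper's proof. The difference is in the last and most substantive claim, the formula for $w_0^{(r)}(0,\alpha)$. There the paper differentiates the physical-space representation $r$ times, notes that since $\psi_0(x,0)\equiv 1$ the surviving Leibniz terms carry polynomial weights $(ix/\varepsilon)^{s}$ with $s\leq r-1$ only, and applies its Theorem 1 (the kernel-averaging estimate) term by term; the supremum of the $q$ derivatives of $(x/\varepsilon)^s$ over $[-\eta,\eta]$ is $\BigO(\varepsilon^{-s})$, which is exactly where the stated factor $\varepsilon^{-r+1}$ comes from. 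Your algebraic identification of the main term, $\hat g_{0,0}(k)=-\Omega(k)b_0^*(k)/(ik)$ via integrating the cell problem over a period, is correct and is the same computation the paper performs.

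The gap is in your error accounting for the $n\neq 0$ modes. Your "honest bound" $\BigO(\alpha^{q-r})$ is not the stated $\BigO(\varepsilon^{-r+1}\alpha^q)$, and the patch you propose --- assuming $\eta\lesssim\varepsilon^{1/r}$ --- is an extra hypothesis that appears nowhere in the lemma and is violated by admissible micro-box scalings of the paper (e.g.\ $\eta\sim\varepsilon^{\theta}$ with $\theta<1/r$, which the long-time analysis in Section 2.4 permits for large $q$). The loss of one power of $\alpha$ in your count is avoidable: the worst Leibniz term, $\hat g_{0,n}(0)\,\alpha^{-r}\hat K^{(r)}(-2\pi n/\alpha)$, vanishes identically because $g_0(y,0)=a(y)\partial_y\psi_0(y,0)\equiv 0$, so every Fourier coefficient $\hat g_{0,n}(0)$ is zero --- the same cancellation that gives you $w_0(0,\alpha)=0$. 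The surviving worst term has $\ell=1$ and is $\BigO(\alpha^{q-r+1}n^{-q})$; summing over $n$ gives $\BigO(\alpha^{q-r+1})=\eta^{r-1}\cdot\varepsilon^{-r+1}\alpha^q\leq \varepsilon^{-r+1}\alpha^q$ since $\eta\leq 1$, which recovers (indeed slightly improves) the stated bound with no assumption on the relative scaling of $\eta$ and $\varepsilon$. With that correction your Fourier-side route closes; alternatively you can simply invoke the paper's Theorem 1 on each Leibniz term as the paper does, which packages the same $\hat K$-decay argument for you. You should also supply the summability of $\sum_n|\hat g_{0,n}^{(\ell)}(0)|n^{-q}$ that you flag, e.g.\ from $\partial_k^\ell g_0(\cdot,0)\in L^2$ of the cell together with $q\geq 1$.
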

\begin{proof}
Let 
$$
  \tilde{h}(x) :=
  -\varepsilon \partial_x K_\eta(x)a(x/\varepsilon)
=-\frac{\varepsilon}{\eta^2}K'(x/\eta)a(x/\varepsilon) 
-\frac{1}{\eta} K(x/\eta)a'(x/\varepsilon).
$$
Moreover, via intergration by parts,
\begin{align}\lbeq{vmexpr}
w_m(k,\alpha)&=
   \varepsilon \int K_\eta(x)a(x/\varepsilon)\partial_x \psi_m(x/\varepsilon,k)e^{ikx/\varepsilon} \D{x}\\
   &= \int \tilde{h}(x)
  \psi_m(x/\varepsilon,k)e^{ikx/\varepsilon} \D{x}.\nonumber
\end{align}
We have $\supp\;\tilde{h}\subset[-\eta,\eta]$ and $|\tilde{h}|\leq C(\alpha+1)/\eta$.
Therefore by Cauchy--Schwarz
\begin{align*}
  |w_m(k,\alpha)|^2 &\leq 
  \int_\eta^\eta |\tilde{h}(x)|^2 \D{x}\times
  \int_\eta^\eta |\psi_m(x/\varepsilon,k)|^2 \D{x}\\
  &\leq
  C \int_\eta^\eta |\alpha+1|^2/\eta^2 \D{x}\times
  \varepsilon\int_{1/\alpha}^{1/\alpha} |\psi_m(x,k)|^2\D{x}\\
  &\leq
  C \frac{|\alpha+1|^2}{\eta}
  \frac{\varepsilon}{\alpha} \leq C.
\end{align*}
This shows the boundedness.
Next, from \eq{vmexpr} it is clear that
\begin{align*}
w_m(k,\alpha) =
 \varepsilon \int \tilde{h}(\varepsilon x)
  \psi_m(x,k)e^{ikx} \D{x} =
  \int h(x,\alpha)
  \psi_m(x,k)e^{ikx} \D{x}.
\end{align*}
Hence, $w_m(k,\alpha)$ are the Bloch coefficients for $h(x,\alpha)$.
Therefore, by Parseval,
$$
   \sum_{m=0}^\infty \int_{-\pi}^{\pi}|w_m(k,\alpha)|^2 \D{k}
   = \int |h(x,\alpha)|^2\D{x}
   = \frac{1}{\alpha} \int_{-1}^1 |h(x/\alpha,\alpha)|^2\D{x}   
   \leq C(1+\alpha)^2.
$$
To prove the last result we let $\psi=\psi_0$ to simplify the notation.
Denoting the binomial cofficients by $c_{r\ell}$, we have
\begin{align*}
 \frac{\D^r}{\D{k}^r} w_0(k,\alpha) ={}&
 \frac{\D^r}{\D{k}^r} \varepsilon \int K_\eta(x)a(x/\varepsilon)\partial_x \psi(x/\varepsilon,k)e^{ikx/\varepsilon} \D{x}\\
 ={}&\varepsilon \sum_{\ell=0}^rc_{r\ell} \int K_\eta(x)a(x/\varepsilon)\partial_x \psi^{(\ell)}(x/\varepsilon,k)e^{ikx/\varepsilon} \left(\frac{ix}{\varepsilon}\right)^{r-\ell}\D{x}\\
 ={}&\sum_{\ell=0}^rc_{r\ell} \int K_\eta(x)a(x/\varepsilon) \psi_{x}^{(\ell)}(x/\varepsilon,k)e^{ikx/\varepsilon} \left(\frac{ix}{\varepsilon}\right)^{r-\ell}\D{x}
 \\
 &+ \sum_{\ell=0}^rc_{r\ell} \int K_\eta(x)a(x/\varepsilon) ik\psi^{(\ell)}(x/\varepsilon,k)e^{ikx/\varepsilon} \left(\frac{ix}{\varepsilon}\right)^{r-\ell}\D{x}\\
 &+\sum_{\ell=0}^{r-1}c_{r\ell} \int K_\eta(x)a(x/\varepsilon)\psi^{(\ell)}(x/\varepsilon,k)e^{ikx/\varepsilon} i(r-\ell)\left(\frac{ix}{\varepsilon}\right)^{r-\ell-1}\D{x}.
\end{align*}
Since $\psi(x,0)\equiv 1$,
\begin{align*}
  w_0^{(r)}(0,\alpha) ={}& 
\sum_{\ell=1}^rc_{r\ell} \int K_\eta(x)a(x/\varepsilon) 
\psi_{x}^{(\ell)}(x/\varepsilon,0)\left(\frac{ix}{\varepsilon}\right)^{r-\ell}\D{x}\\
&+
\sum_{\ell=0}^rc_{r\ell} \int K_\eta(x)a(x/\varepsilon) \psi^{(\ell)}(x/\varepsilon,0)i(r-\ell)\left(\frac{ix}{\varepsilon}\right)^{r-\ell-1}\D{x}.
\end{align*}
We can now use \eq{second} in \theo{oscapprox} to get
\begin{align*}
  w_0^{(r)}(0,\alpha) &= 
 \int_{-1/2}^{1/2}a(x) (\psi_x^{(r)}(x,0) +ic_{r,r-1}\psi^{(r-1)}(x,0))\D{x} + \BigO(\varepsilon^{-r+1}\alpha^q)\\
 &=
 \lim_{k\to 0} \frac{\D^r}{\D{k}^r}\int_{-1/2}^{1/2}a(x) (\psi_{x}(x,k) +ik\psi(x,k))\D{x} + \BigO(\varepsilon^{-r+1}\alpha^q)
\\
 &=
 \lim_{k\to 0} \frac{\D^r}{\D{k}^r}\frac{1}{ik}\int_{-1/2}^{1/2}(\partial_x + ik)a(x)(\partial_x+ik)\psi(x,k)\D{x} + \BigO(\varepsilon^{-r+1}\alpha^q)
 \\
 &=
 \lim_{k\to 0} \frac{\D^r}{\D{k}^r}\frac{-\omega_0(k)^2}{ik}\int_{-1/2}^{1/2}\psi(x,k)\D{x} + \BigO(\varepsilon^{-r+1}\alpha^q)
\\
 &=
 \lim_{k\to 0} \frac{\D^r}{\D{k}^r}\frac{-\omega_0(k)^2}{ik}b_0^*(k) + \BigO(\varepsilon^{-r+1}\alpha^q).
\end{align*}
%
This concludes the proof.
\end{proof}
\subsubsection{Proof of \theo{Wtheo}}

To show that $W\in L^2(-\pi,\pi)$ we note that by \lemlab{blem},
\lemlab{slem} and \lemlab{vlem}
\begin{align*}
\int_{-\pi}^{\pi} |W(k,\alpha)|^2 \D{k} &=
\int_{-\pi}^{\pi} \left|\sum_{m=0}^\infty b_m(k)s_m(k,\alpha)w_m(k,\alpha)\right|^2 \D{k} \\
&\leq
\int_{-\pi}^{\pi} \sup_m |s_m(k,\alpha)|^2 \sum_{m=0}^\infty |b_m(k)|^2
\sum_{m=0}^\infty |w_m(k,\alpha)|^2 \D{k} \\
&\leq
C\sum_{m=0}^\infty \int_{-\pi}^{\pi} |w_m(k,\alpha)|^2 \D{k} \leq C.\\
\end{align*}
We should also check that $W\in L^\infty$. Let
$$
g(x,k,\alpha)=      \sum_{m=0}^\infty b_m(k)s_m(k,\alpha)\psi_m(x,k).
$$
Then with $N=\lceil 1/\alpha\rceil$
$$
  W(k,\alpha) = \int_{-N}^N h(x,\alpha)g(x,k,\alpha) e^{ikx} \D{x}
$$
and since $|h|\leq C\alpha$ and by the periodicity of $g$,
$$
|W(k,\alpha)|\leq C\alpha \int_{-N}^{N} |g(x,k,\alpha)|\D{x}
\leq C\int_{-1/2}^{1/2} |g(x,k,\alpha)|\D{x}
\leq C||g(\cdot,k,\alpha)||_{L^2}.
$$
We note that since $\{\psi_m\}$ is an orthonormal $L^2$-basis for each fixed $k$
and $|s_m(k,\alpha)|\leq C$,
$$
 \int_{-1/2}^{1/2} |g|^2\D{x}  = \sum_{m=0}^\infty  
  |b_m(k)|^2|s_m(k,\alpha)|^2
  \leq  C^2\sum_{m=0}^\infty  
  |b_m(k)|^2  = C^2.
$$
Hence, $|W|\leq C$ uniformly in $k$.
This shows that $W\in L^\infty$ with a norm bounded uniformly in $k$ and $\alpha$.

We have by the lemmas above, and since $\omega_0(0,\alpha) \equiv 0$,
$$
  W(0,\alpha) = \sum_{m=0}^\infty b_m(0)s_m(0,\alpha)w_m(0,\alpha)
   = b_0(0)s_0(0,\alpha)w_0(0,\alpha)=0.
$$
For $1\leq r\leq p$, with binomial coefficients $c_{r\ell}$ we get
\begin{align*}
W_0^{(r)}(0,\alpha) &=
\left. \frac{\D^r}{\D{k}^r} b_0(k)s_0(k,\alpha)w_0(k,\alpha)\right|_{k=0} \\
 &= \sum_{\ell=0}^rc_{r\ell}s_0^{(r-\ell)}(0)
 \left. \frac{d^\ell}{\D{k}^\ell}b_0(k)w_0(k,\alpha)\right|_{k=0}\\
 &= 
 \left. \frac{\D^r}{\D{k}^r}b_0(k)w_0(k,\alpha)\right|_{k=0}
 \\
 &= \sum_{\ell=0}^rc_{r\ell}
 b_0^{(r-\ell)}(0)w_0^{(\ell)}(0,\alpha)
 \\
 &= \sum_{\ell=0}^rc_{r\ell}
\left. b_0^{(r-\ell)}(0)\left[\left(\frac{-\Omega(k)}{ik}b^*_0(k)\right)^{(\ell)}
\right|_{k=0}+O(\varepsilon^{-\ell+1}\alpha^q)\right]\\
 &= 
i\left. \frac{\D^r}{\D{k}^r}\frac{\Omega(k)}{k}|b_0(k)|^2
\right|_{k=0} + \BigO(\varepsilon^{-r+1}\alpha^q).
\end{align*}
Since if $g(k)=kf(k)$ we have $g^{(r)}(k)=kf^{(r)}(k)+rf^{(r-1)}(k)$,
$$
\left. \frac{\D^r}{\D{k}^r}\frac{\Omega(k)B(k)}{k}
\right|_{k=0}   =
\frac{1}{r+1}\left. \frac{\D^{r+1}}{\D{k}^{r+1}}\Omega(k)B(k)
\right|_{k=0},   
$$
which proves \eq{Wexpr}.
We should finally show that the difference $W^{(1)}-W_0^{(1)}$ is
small.
We set
$$
   z(x,k,\alpha) = \left[g(x,k,\alpha)-b_0(\alpha)s_0(k,\alpha)\psi_0(x,k)\right]e^{ikx}.
$$
Then
$$
  W(k,\alpha)-W_0(k,\alpha) = \int_{-N}^N h(x,\alpha)z(x,k,\alpha)\D{x}
$$
and since $b_m(0)=\delta_m$ by \lemlab{blem},
$$
  z_k(x,0,\alpha) = \sum_{m=1}^\infty b'_m(0)s_m(0,\alpha)\psi_m(x,0).
$$
As before,
\begin{align*}
  |W^{(1)}(k,\alpha)-W^{(1)}_0(k,\alpha)|&\leq 
  C\alpha\int_{-N}^N|z_k(x,k,\alpha)|\D{x}\leq
  C\int_{-1/2}^{1/2}|z_k(x,k,\alpha)|\D{x}\\
  &\leq
  C||z_k(\cdot,0,\alpha)||_{L^2}.
\end{align*}
Furthermore,
\begin{align*}
||z_k(\cdot,0,\alpha)||_{L^2}^2
&=\sum_{m=1}^\infty |b'_m(0)|^2|s_m(k,0)|^2
\leq C\alpha^{2q}\sum_{m=1}^\infty |b'_m(0)|^2\\
&\leq C\alpha^{2q}||\partial_k\psi_0(\cdot,0)||_{L^2}^2
\leq C\alpha^{2q}.
\end{align*}
Hence, the size of the remaining sum is of the same order as the 
error $O(\varepsilon^{-r+1}\alpha^q)$ in the leading term for $r=1$.

\subsection{Consistency}\lbsec{Consistency2}

In this section we make a formal derivation to motivate the relationship \eq{MQ}.
We therefore
consider the exact solution for initial data $Q(x)$ which was given above
in \eq{vexact}.
We assume that only the zeroth term $v_0$ is relevant and 
that the initial data $Q$ is band limited as in \eq{compactsupp}.
Then, by \lemlab{Fflemma} and \eq{vexact},
$$
  \veps(t,x) \approx v_0(t,x) =
  \int_{-1/2\varepsilon}^{1/2\varepsilon}
  \hat{Q}(k)b_0(\varepsilon k)
    \psi_0(x/\varepsilon,\varepsilon k)
 \exp(ikx)\cos(\omega_0(\varepsilon k)t/\varepsilon)
dk.
$$
This implies that
$$
  ({\mathcal M}Q)(x) = ({\mathcal K} \veps)(0,x) \approx \int K_\eta(x'-x)\tilde{U}(x',x'/\varepsilon)dx',
$$
where
$$
  \tilde{U}(x,y) = 
  \int_{-L}^{L} 
  \hat{Q}(k)b_0(\varepsilon k)s_0(\varepsilon k, \alpha)
    \psi_0(y,\varepsilon k)
 \exp(ikx)dk.
$$
Since $k$ is in a bounded set we can Taylor expand the terms in $\varepsilon k$.
From \lemlab{slem},
$$
  s_0(\varepsilon k, \alpha) = 1 + O\left((\varepsilon k)^{p+1}\right).
$$
Hence, 
\begin{align*}
  \tilde{U}(x,y) 
  &= 
  \sum_{\ell=0}^p
  \left.\frac{d^\ell}{dk^\ell}
  b_0(k)
    \psi_0(y,k)\right|_{k=0}\int_{-L}^{L}
    \frac{(\varepsilon k)^{\ell}}{\ell !}
  \hat{Q}(k)
 \exp(ikx)dk
+ \BigO(\varepsilon^{p+1})
\\  
&= 
  \sum_{\ell=0}^p
  \frac{(-i\varepsilon)^{\ell}}{\ell !}
  \left.\frac{d^\ell}{dk^\ell}
  b_0(k)
    \psi_0(y,k)\right|_{k=0}
    Q^{(\ell)}(x)
+ \BigO(\varepsilon^{p+1}).
\end{align*}
We can use \theo{oscapprox} to deduce that
\begin{align*}
   \int K_\eta(x'-x)\partial_k^r\psi_0(x'/\varepsilon,0)Q^{(\ell)}(x')\D{x'} 
   &=
   \int_{-1/2}^{1/2}\partial_k^r\psi_0(x,0)dx Q^{(\ell)}(x) +
   \BigO(\alpha^q)\\
   &=
   \partial_k^rb_0(0)^*Q^{(\ell)}(x) +
   \BigO(\alpha^q).
\end{align*}
Therefore, 
$$
({\mathcal M}Q)(x) \approx 
  \sum_{\ell=0}^p
  \frac{(-i\varepsilon)^{\ell}}{\ell !}
  B^{(\ell)}(0)
    Q^{(\ell)}(x) +   \BigO(\varepsilon^\ell\alpha^q)
+ \BigO(\varepsilon^{p+1}).
$$
Since $B(0)=1$ and $B'(0)=0$ by \lemlab{blem}
we have, in particular when $p\geq 2$
$$
({\mathcal M}Q)(x) \approx Q(x) 
  -\frac{\varepsilon^{2}}{2}
  B''(0)
    Q''(x) +   \BigO(\varepsilon^3 + \alpha^q).
$$
This is the expression \eq{MQ} with $\gamma=-B''(0)/2$.
In order to have an accuracy that is better than an $O(\varepsilon^2)$ 
we need
to correct $Q(x)$ and to use a kernel such that $\alpha^q=O(\varepsilon^3)$.
More precisely, we should take
$$
  \tilde{Q}(x) = Q(x)+\frac{\varepsilon^{2}}{2} B''(0)Q''(x),
$$
and use this as initial data instead of $Q$. Then, if $Q$ is a third order polynomial,
\begin{align*}
({\mathcal K}u)(0,x) &\approx
\tilde{Q}(x) 
  -\frac{\varepsilon^{2}}{2}
  B''(0)
    \tilde{Q}''(x) +   \BigO(\varepsilon^3)
    \\
&=\left(Q(x)+\frac{\varepsilon^{2}}{2} B''(0)Q''(x)\right)
  -\frac{\varepsilon^{2}}{2}
  B''(0)
Q''(x)+   \BigO(\varepsilon^3)
\\
&=Q(x)+O(\varepsilon^3).
\end{align*}
Let us now see what the implications are for the computation of
the flux $F_{\rm HMM}$. If $Q(x)$ is a third order polynomial then so
is $\tilde{Q}(x)$ and we can use the result in the previous section.
With initial data $\tilde{Q}(x)$ we obtain
\begin{align*}
  F_{\rm HMM} &=\frac12
\Omega'' \tilde{Q}'(0)
  - 
\varepsilon^2\frac{\Omega''''+6\Omega'' B''}{4!}\tilde{Q}'''(0)+O(\alpha^{q})\\
&=
 \frac12\Omega''\left(Q'(0)+\frac{\varepsilon^{2}}{2} B''Q'''(0)\right)
  - 
\varepsilon^2\frac{\Omega''''+6\Omega'' B''}{4!}
Q'''(0)+O(\alpha^{q})\\
&=
 \frac12\Omega''Q'(0)
  - 
\varepsilon^2\frac{\Omega''''}{4!}Q'''(0)
+O(\varepsilon^3).
\end{align*}
This is thus consistent with \eq{Fhom}  upto order $O(\varepsilon^3)$
if $\alpha^q=O(\varepsilon^3)$.

\section{Numerical Examples \label{section:num}}

In this section we consider three long time wave propagation problems and a detailed convergence study of the flux correction.
The three problems are of the form,
\begin{equation}
	\left\{
	\begin{aligned}
		& u^{\varepsilon}_{tt} - \partial_x (A^{\varepsilon} u^{\varepsilon}_x) = 0, \qquad [0,1] \times [0,T^{\varepsilon}], \\
		& u^{\varepsilon}(x,0) = e^{-100 x^2} + e^{-100(1-x)^2}, \quad u^{\varepsilon}_t(x,0) = 0, \qquad \forall x \in [0,1],
	\end{aligned}
	\right.
	\label{eq:num:exact}
\end{equation}
with $1$-periodic boundary conditions in $x$ and
$T^{\varepsilon} = \mathcal{O}(\varepsilon^{-2})$. 
Note that the initial data has no $\varepsilon$ dependency. 
We will try three different $A^{\varepsilon}=A^{\varepsilon}_k$ parameters:
$$
	\left\{
	\begin{aligned}
		& A^{\varepsilon}_1(x) = 1.1 + \sin \frac{2 \pi x}{\varepsilon}, \\
		& A^{\varepsilon}_2(x) = 1.1 + \frac{1}{2} \left( \cos 2 \pi x + \sin \frac{2 \pi x}{\varepsilon} \right), \\
		& A^{\varepsilon}_3(x) = \left(1.1+\sin{}2\pi{}\frac{x}{\varepsilon}\right)\left( 1.5+0.5(\cos(2\pi{}x-1))\right). \\
	\end{aligned}
	\right.
$$
The problems can be expected to have dispersive effects which are not described by the homogenized wave equation \eqref{eq:introduction:wavebar} restated here for convenience,
\begin{equation}
	\left\{
	\begin{aligned}
		& \bar{u}_{tt} - \partial_x (\bar{A} \bar{u}_x) = 0, \qquad [0,1] \times [0,T], \\
		& \bar{u}(x,0) = e^{-100 x^2} + e^{-100(1-x)^2}, \quad \bar{u}_t(x,0) = 0, \qquad \forall x \in [0,1],
	\end{aligned}
	\right.
	\label{eq:num:hom}
\end{equation}
where $\bar{u}$ is $1$-periodic. 
We suppose there is an effective equation valid for $T^{\varepsilon}=\mathcal{O}(\varepsilon^{-2})$ of the form
\begin{equation}
	\left\{
	\begin{aligned}
		& \tilde{u}_{tt} - \partial_x \left( \bar{A}(x) \tilde{u}_x + \beta(x) \varepsilon^2 \tilde{u}_{xxx} \right)= 0, \qquad [0,1] \times [0,T^{\varepsilon}], \\
		& \tilde{u}(x,0) = e^{-100 x^2} + e^{-100 (1-x)^2}, \quad \tilde{u}_t(x,0) = 0, \qquad \forall x \in [0,1], \\
	\end{aligned}
	\right.
	\label{eq:num:eff}
\end{equation}
where $\bar{A}$ is the same $\bar{A}$ as in \eqref{eq:num:hom}, but unfortunately as we described in relation to \eqref{eq:introduction:eff}, $\beta$ can be difficult to determine from $A$, both symbolically and numerically. We compute $\bar{A}(x)$ and $\beta(x)$ from a $A^{\varepsilon}$ of the form $A^{\varepsilon}(x)=A(x,x/\varepsilon)$ by freezing $x$ and computing $\bar{A}(x)$ and $\beta(x)$ as if $A^{\varepsilon}$ had only fast oscillations. We have used Maple to compute the $\beta$ coefficients from $A$, \cite{holst2011}.

We will consider four methods for each wave propagation problem: An exact (DNS) solution of (\ref{eq:num:exact}) where we discretized the full problem with a finite difference method; a discretization of (\ref{eq:num:hom}), the corresponding homogenized equation (HOM); a discretization of 
(\ref{eq:num:eff}), the corresponding effective equation (EFF); and finally a HMM solution.
Throughout our examples we will use the following parameters: $\varepsilon = 0.03$, $\eta=\tau=20\varepsilon$ and a polynomial kernel which is 19 times continuously differentiable and has 19 zero moments.  We will also use the notation,
$$
	\lambda := \frac{\Delta t}{\Delta x}, \quad \rho^{\varepsilon} := \frac{\varepsilon}{\Delta x}, \quad \rho := \frac{1}{\Delta x}.
$$


\subsection{Wave Propagation Problem One} \label{section:num:ex1}


We consider $A^{\varepsilon}_1(x) = 1.1 + \sin 2 \pi x/\varepsilon$, which has only a fast scale. For the $A^{\varepsilon}_1$ at hand we have $\bar{A}_1=\sqrt{0.21}$. In general, for $A^{\varepsilon}(x)=A(x/\varepsilon)$ where $A(y)=\alpha+\beta\sin{}2\pi y$ we have that $\bar{A}=\sqrt{\alpha^2-\beta^2}$. 
A plot of $\bar{A}_1(x)$ is shown in Fig.~\ref{fig:material1} and $\beta$ is constant $\beta=0.01078280318$.
The parameters to this problem: $T_{\text{max}} = 12.4976$ (2000 HMM steps) and for all solvers except the exact finite difference solutions (direct numerical simulation, e.g., DNS) we used $\lambda = 0.5, \rho = 80$. For DNS we used $\lambda = 0.5$ and $\rho^{\varepsilon} = 64$.
See Fig.~\ref{fig:sol1} for a plot of the numerical solutions.


\begin{figure}
	\centering
	\includegraphics[width=.5\textwidth]{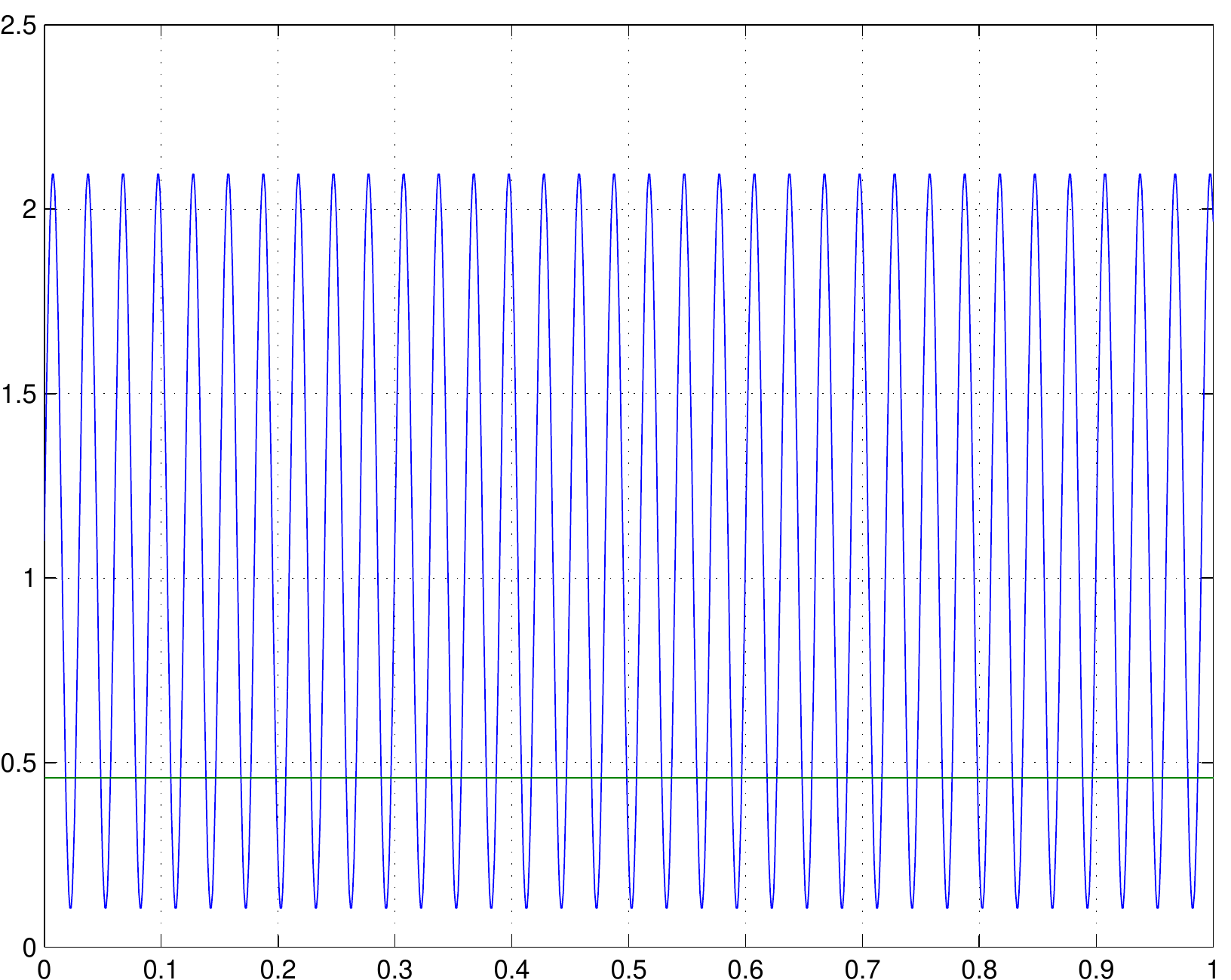}\\
	\captionof{figure}{Example One: Here we have $A^{\varepsilon}_1(x)$ plotted with the constant $\bar{A}_1(x)=\sqrt{0.21}$.}
	\label{fig:material1}
\end{figure}

\begin{figure}
	\centering
	\includegraphics[width=\textwidth]{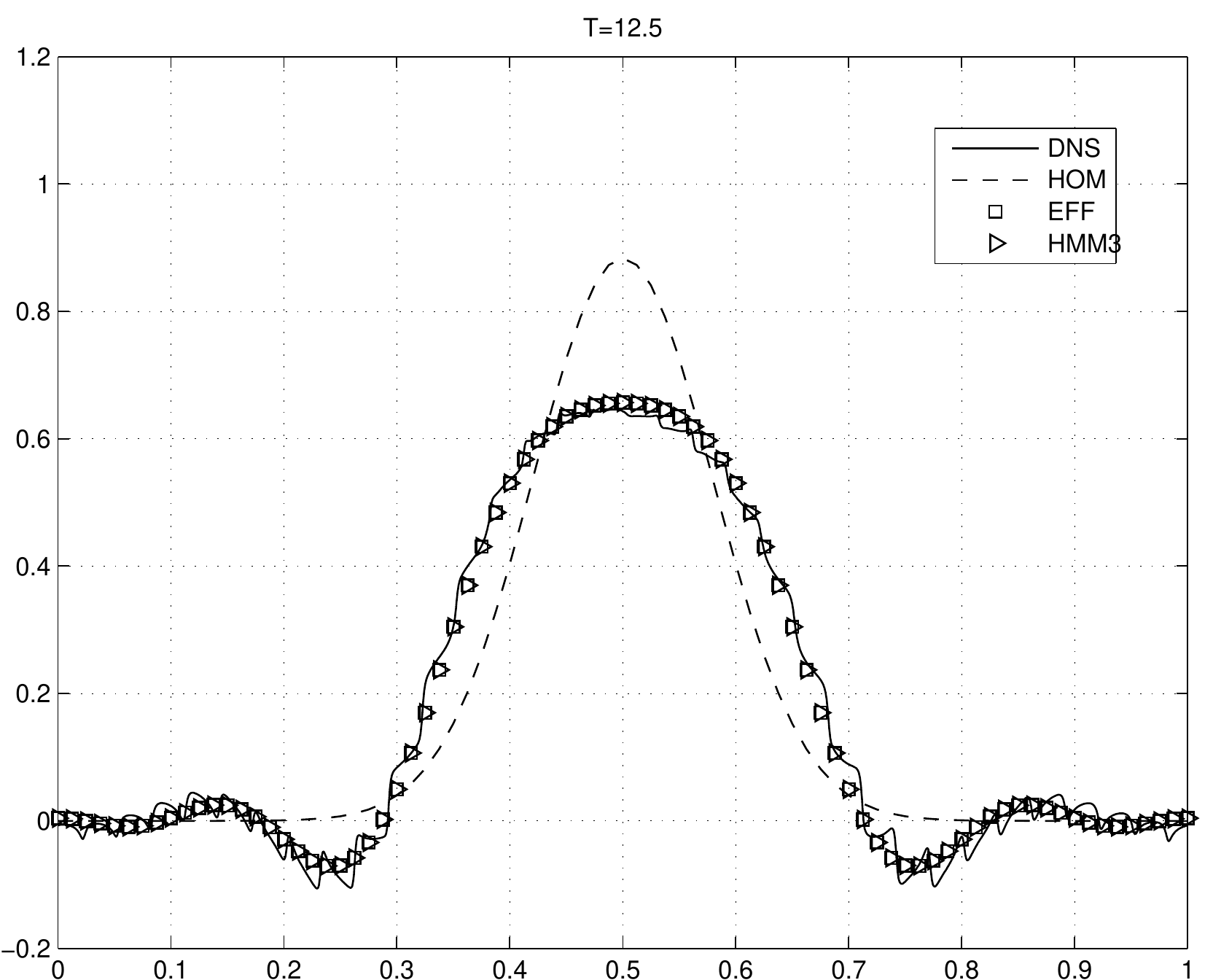}
	\caption{Example One: HMM solution of \eqref{eq:num:exact} together with its effective and homogenized equations \eqref{eq:num:hom} and \eqref{eq:num:eff}.}
	\label{fig:sol1}
\end{figure}

\subsection{Wave Propagation Problem Two} \label{section:num:ex2}

We consider $A^{\varepsilon}(x) = 1.1 + \frac{1}{2} \left( \cos 2 \pi x + \sin 2 \pi x/\varepsilon \right)$. The homogenized $\bar{A}$ can be computed analytically by freezing $x$: $\bar{A}(x) = \left(\int_0^1 \tfrac{1}{A(x,y)}\D{y}\right)^{-1}$. We have that $\bar{A}_2(x) = \sqrt{(0.5 \cos(2 \pi x) + 1.1)^2 - 0.25}$. The profile $\bar{A}_2(x)$ and $\beta_2(x)$ is shown in Fig.~\ref{fig:material2}. 

Numerical parameters: We have $T_{\text{max}} = 7.9985$ (1600 time steps for HMM), $\lambda = 0.5$ and $\rho = 100$ for all solvers except the DNS solver which uses $\lambda = 0.25$ and $\rho^{\varepsilon} = 64$.
See Fig.~\ref{fig:sol2} for a plot of the numerical solutions.
	
	
\begin{figure}
	\centering
	\includegraphics[width=.5\textwidth]{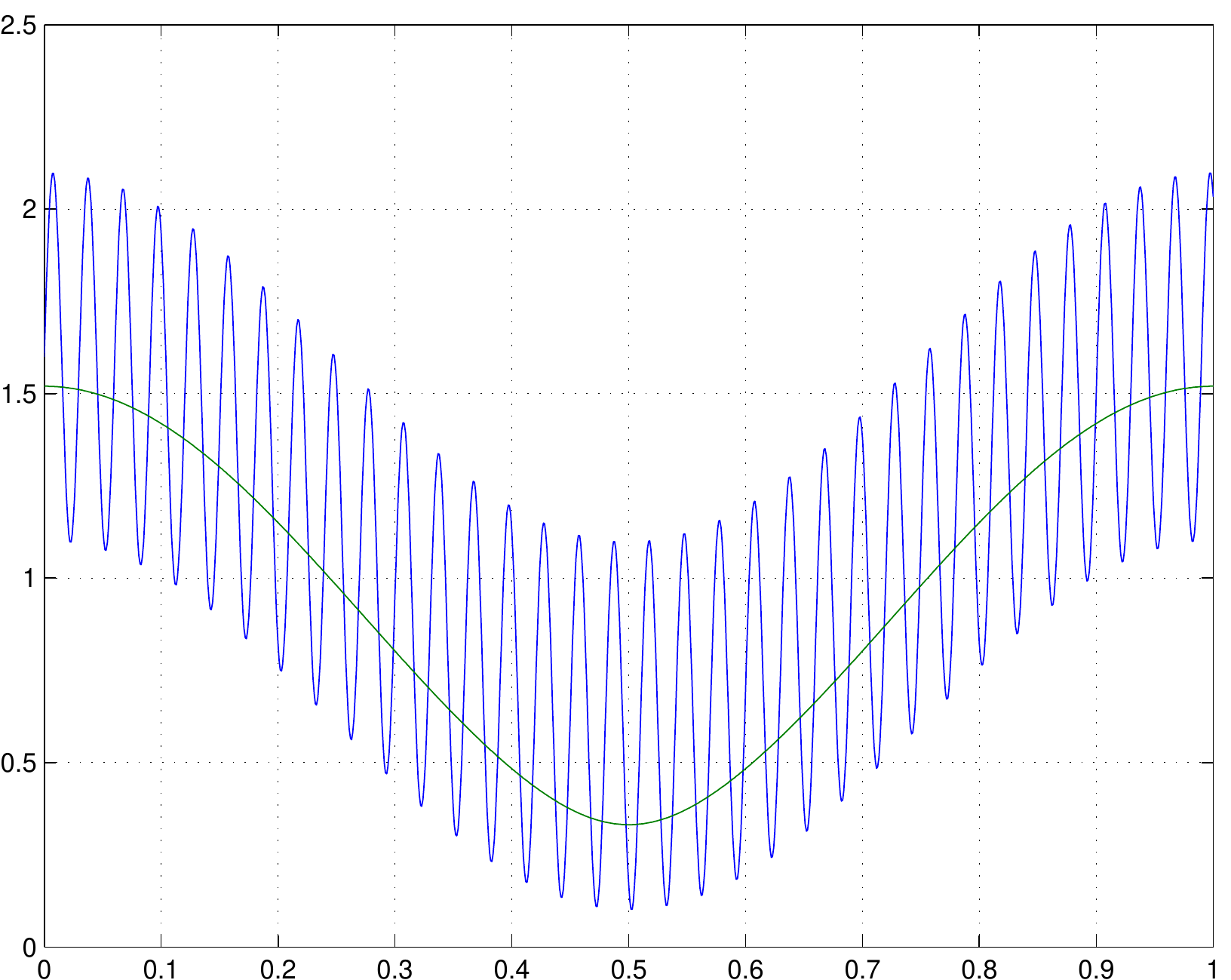}%
	\includegraphics[width=.5\textwidth]{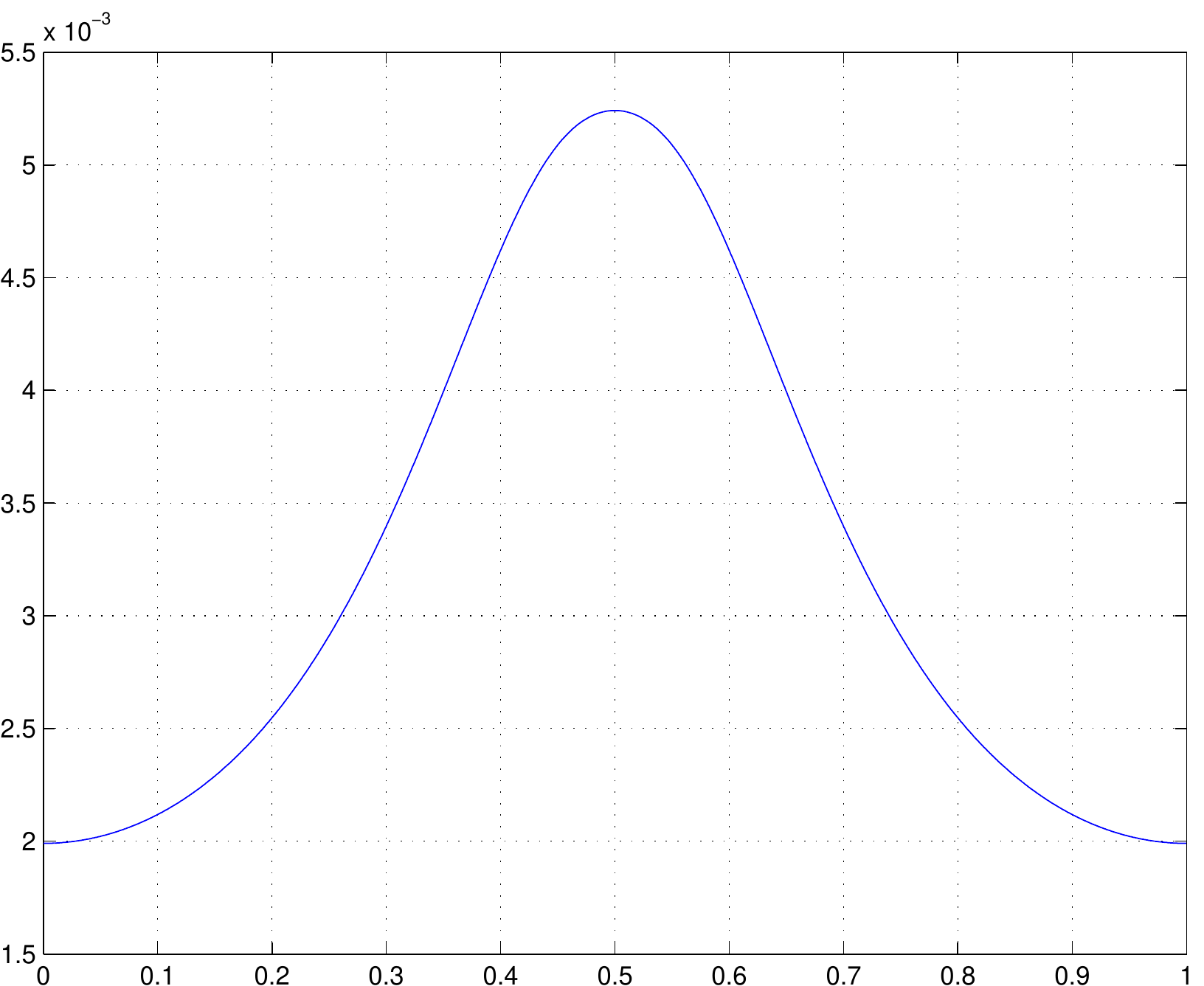}\\
	\captionof{figure}{Example Two: To the left we have $A^{\varepsilon}_2(x)$ plotted with $\bar{A}_2(x)$ and to the right we see $\beta_2(x)$.}
	\label{fig:material2}
\end{figure}

\begin{figure}
	\centering
	\includegraphics[width=\textwidth]{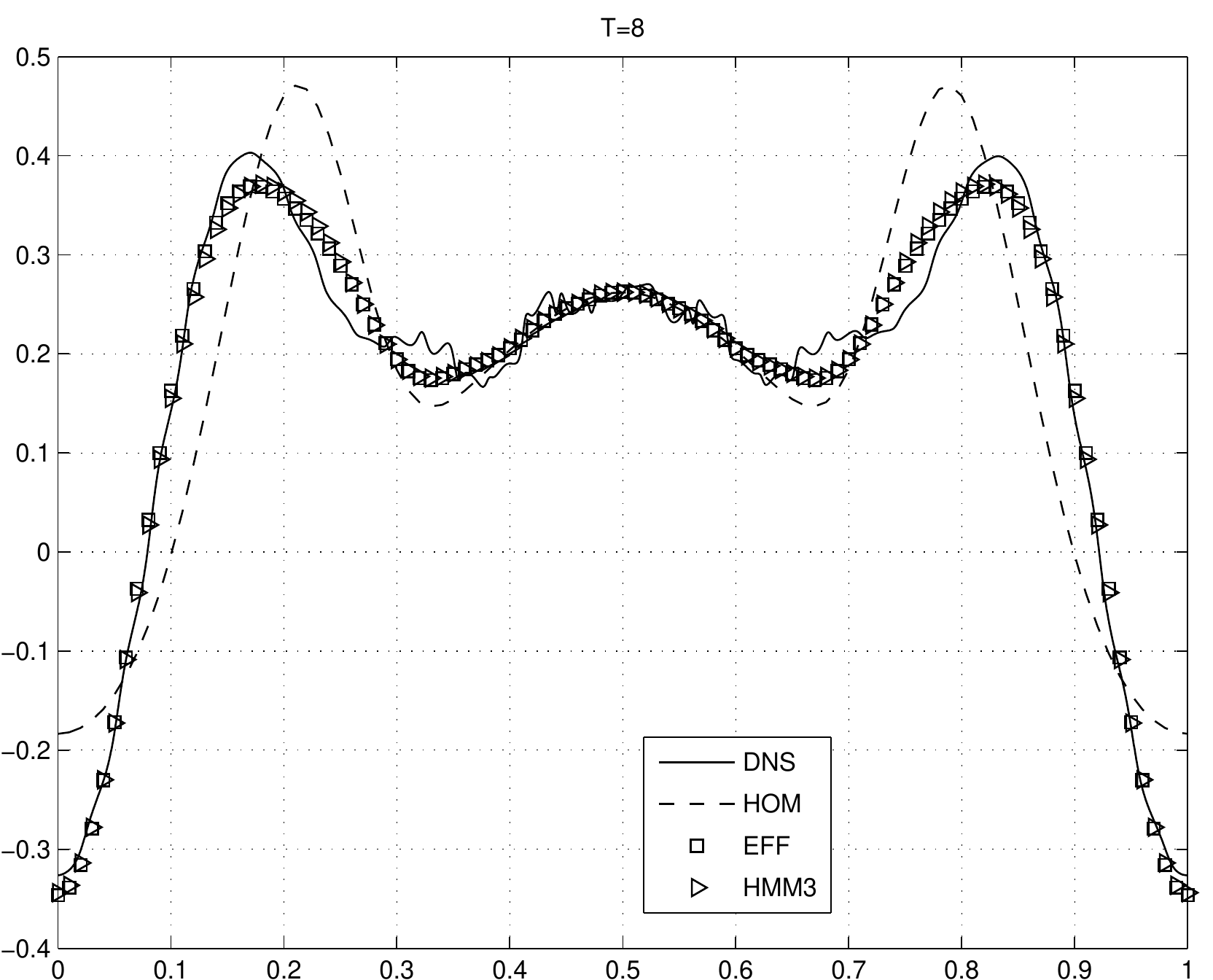}
	\caption{Example Two: HMM solution of \eqref{eq:num:exact} together with its effective and homogenized equations \eqref{eq:num:hom} and \eqref{eq:num:eff}.}
	\label{fig:sol2}
\end{figure}

\subsection{Wave Propagation Problem Three} \label{section:num:ex3} 

Finally we consider $A^{\varepsilon}_3(x)=\left(1.1+\sin{}2\pi{}\frac{x}{\varepsilon}\right)\left( 1.5+0.5(\cos(2\pi{}x-1))\right)$ which has the homogenized $\bar{A}_3(x)=\sqrt{0.21}(1.5+0.5(\cos(2\pi{}x)-1))$. The profile $\bar{A}_3(x)$ and $\beta_3(x)$ is shown in Fig.~\ref{fig:material4}. 

Numerical parameters: We have $\varepsilon = 0.03$ and $T_{\text{max}} = 8.5677$ (1200 HMM steps). For all sovers we used $\lambda = 0.5$ and $\rho = 70$ except for the DNS solver which used $\lambda = 0.25$ and $\rho^{\varepsilon} = 64$.
See Fig.~\ref{fig:sol4} for a plot of the numerical solutions.


\begin{figure}
	\centering
	\includegraphics[width=.5\textwidth]{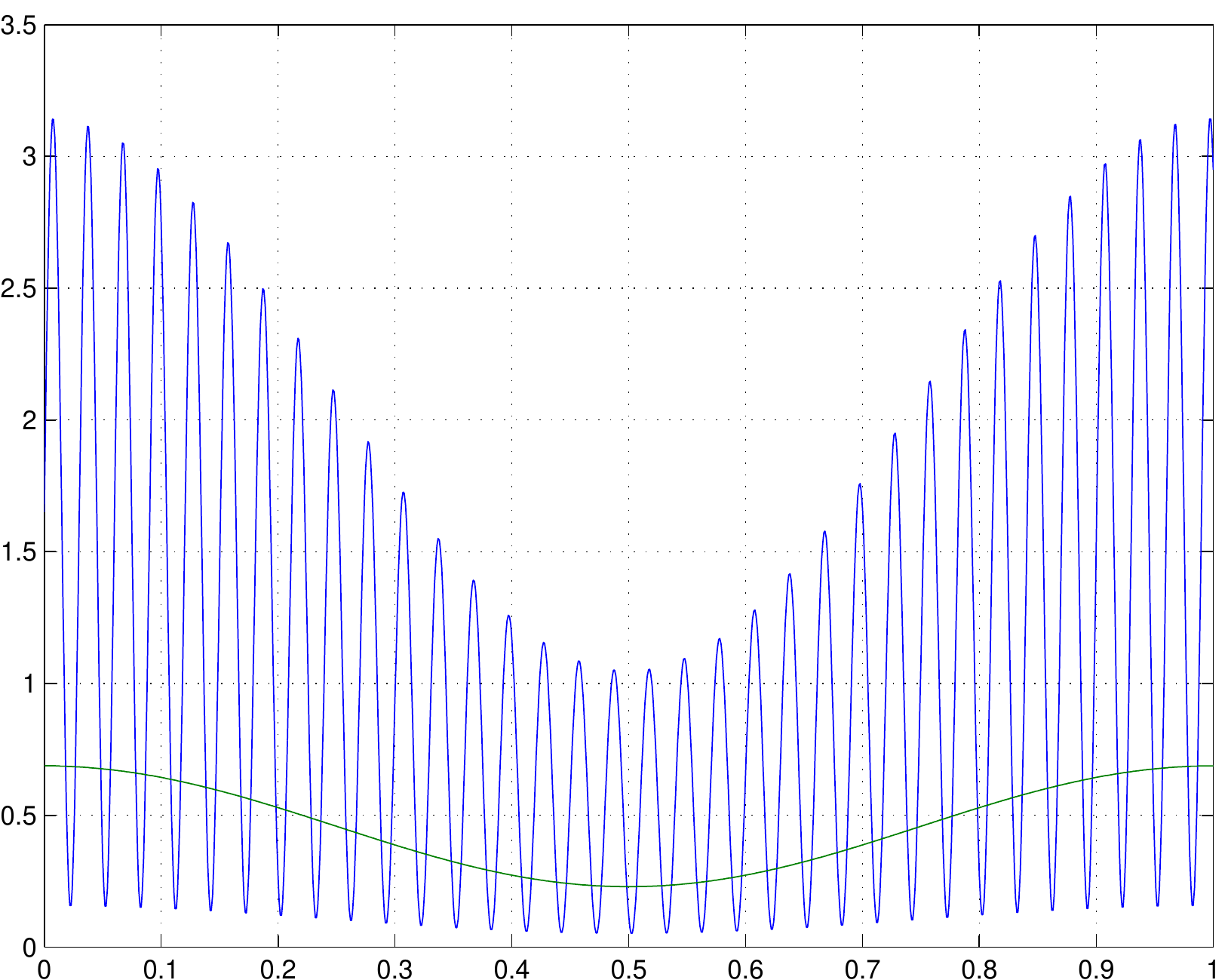}%
	\includegraphics[width=.5\textwidth]{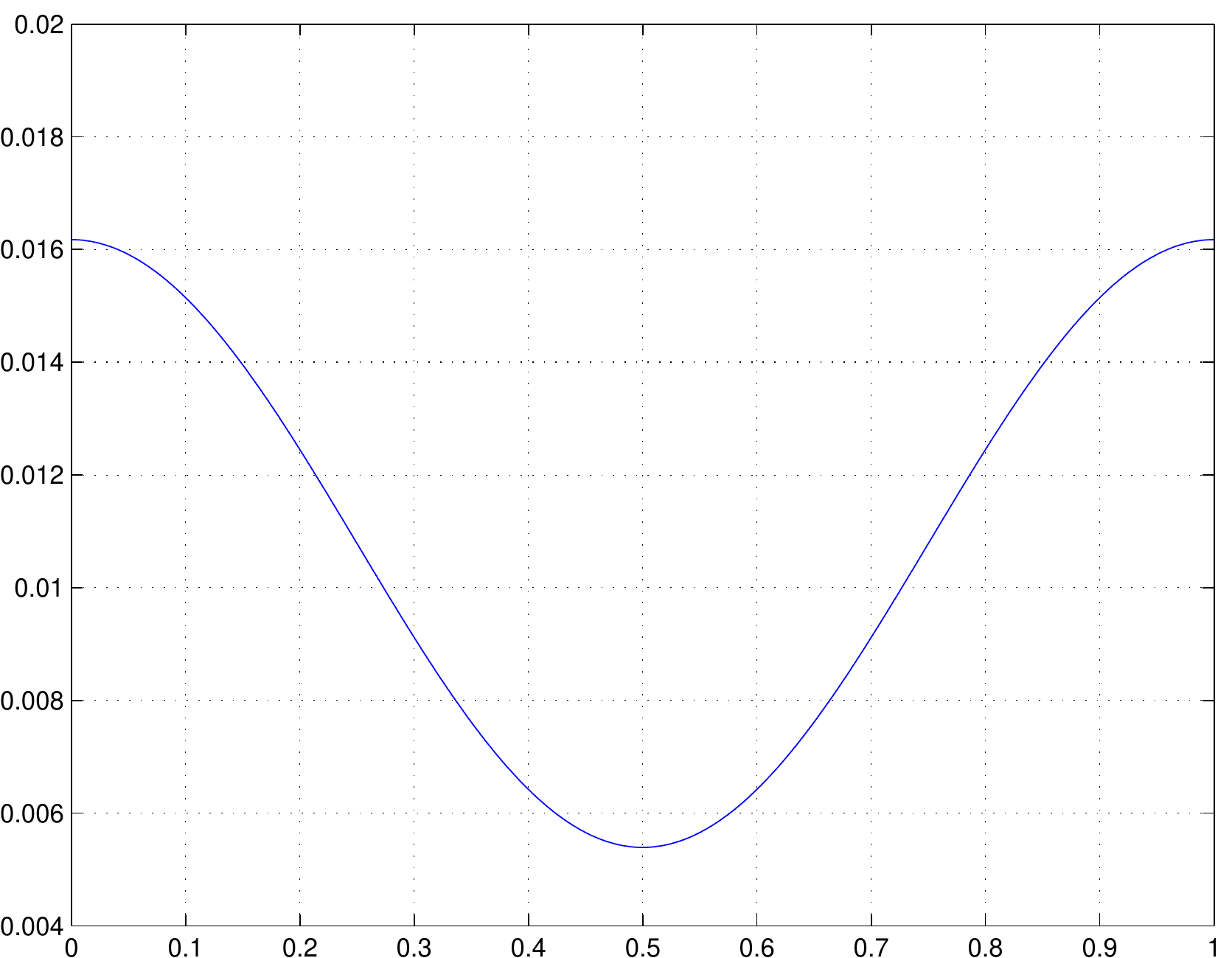}\\
	\captionof{figure}{Example Three: To the left we have $A^{\varepsilon}_3(x)$ plotted with $\bar{A}_3(x)$ and to the right we see $\beta_3(x)$.}
	\label{fig:material4}
\end{figure}

\begin{figure}
	\centering
	\includegraphics[width=\textwidth]{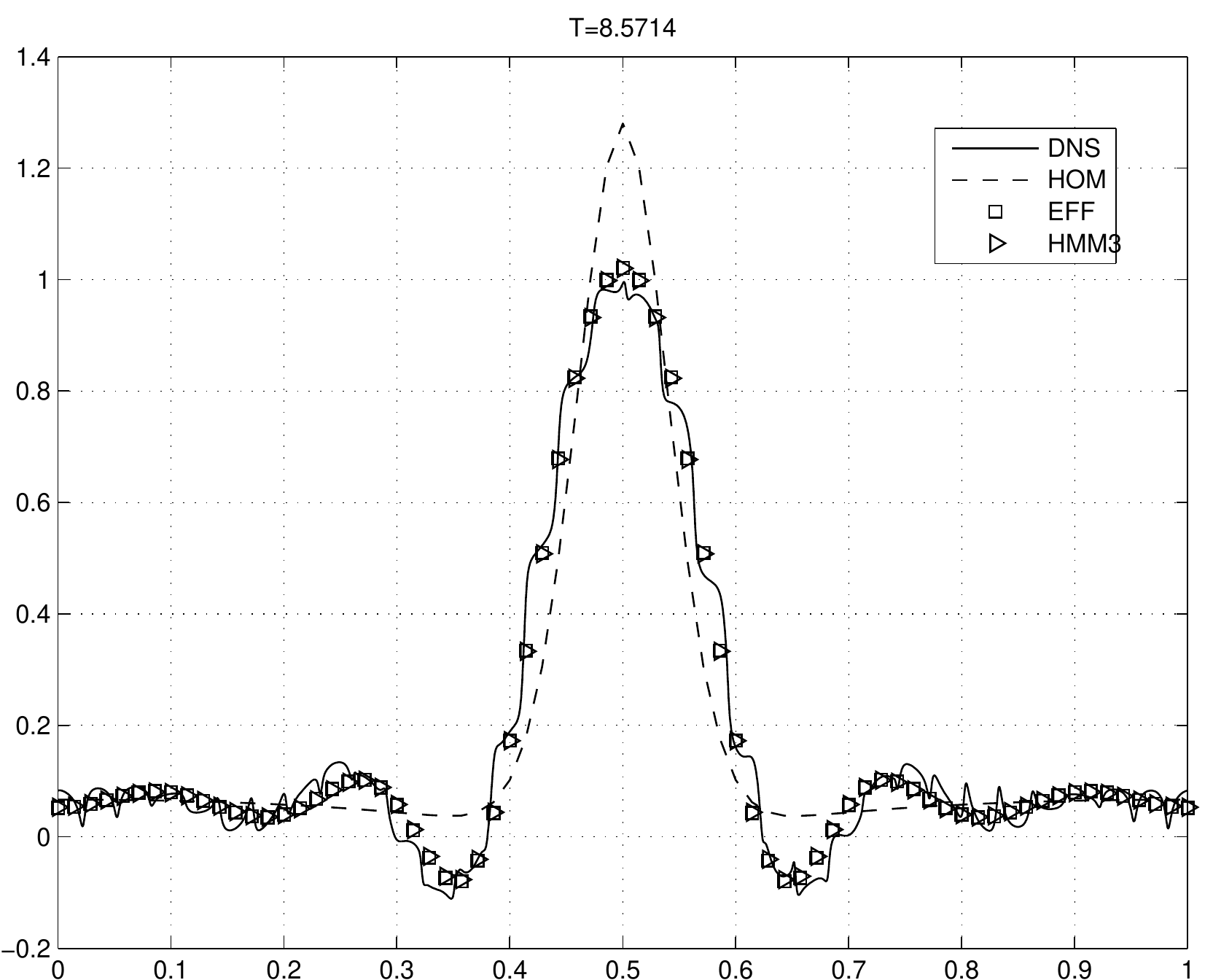}
	\caption{Example Three: HMM solution of \eqref{eq:num:exact} together with its effective and homogenized equations \eqref{eq:num:hom} and \eqref{eq:num:eff}.}
	\label{fig:sol4}
\end{figure}

In conclusion: We see that our numerical method accurately captures the solution of the effective equation. The choice of a bigger $\varepsilon$ is such that the model error in some cases is quite big. The difference between the exact and HMM solution and the solution of the effective equation will vanish as $\varepsilon \rightarrow 0$. These illustrations show that our HMM method gives comparable results to the solution of an effective equation when it is known. But as we argued before; the effective equation might not be known.

\subsection{Numerical Investigation of the HMM Flux} \lbsec{NumFlux}

We will now demonstrate why the correction is essential to obtain a useful approximation to the long time wave propagation problem. We assume the standard wave propagation problem \eqref{eq:introduction:wave} where $A^{\varepsilon}_1(x) = 1.1 + \sin 2 \pi \tfrac{x}{\varepsilon}$ and $\varepsilon=0.01$. We can then compute the correction matrix $M$ and the uncorrected flux $F$, using the same micro solver parameters $\tau=\eta=20\varepsilon$, $\rho^{\varepsilon}=64$, $\lambda=0.5$ and kernel from $\mathbb{K}^{9,9}$ and five sample points. We get $[\tfrac{1}{\varepsilon^2}(M-I)|\f]$
\begin{equation}
\left[
\text{\begin{scriptsize}
\begin{tabular}{rrrr|r}
  -2.4425e-11&  -2.3810e-09&   4.7060e-02&  -2.8439e-10& 2.303053047621222e-14\\
  -1.3116e-09&   1.5765e-10&  -3.3997e-08&   1.4118e-01& 4.582575696102362e-01\\
   5.2599e-07&   3.2966e-04&   7.4096e-09&   3.9335e-05& 9.160534264502929e-18\\
   1.4890e-05&  -1.9744e-05&   2.0390e-03&  -2.3307e-06& 1.293944940509046e-05\\
\end{tabular}
\end{scriptsize}}
\right].
\end{equation}

The corrected flux $\tilde{\f}$ is then (c.f. \eqref{eq:tildef}),
\begin{equation}
\tilde{\f} = M^{-T} \f = 
\left[
\text{\begin{scriptsize}
\begin{tabular}{r}
    -2.083614204143469e-14\\
     4.582575696102412e-01\\
     2.386730479027304e-13\\
     6.469733186662828e-06\\
\end{tabular}
\end{scriptsize}}
\right],
\end{equation}
and the exact flux $\f^{\text{exact}}$ is
\begin{equation}
	\begin{aligned}
		\f^{\text{exact}} &= 
		\left[
		\begin{tabular}{rrrr}
			0 \\ $\sqrt{0.21}$ \\ 0 \\ $6 \beta \varepsilon^2$ 
		\end{tabular}
		\right],
		\qquad \beta = 0.01078280318\ldots \\		
		& = \left[
		\text{\begin{scriptsize}
		\begin{tabular}{rrrr}
			0 \\ 4.58257569495584e-01 \\ 0 \\ 6.46968190800000e-06 
		\end{tabular}
		\end{scriptsize}}
		\right].
	\end{aligned}
\end{equation}
As we can see the correction affects mostly the last component of $\tilde{\f}$ related to the dispersive effects and not so much the lower order terms. 
The relative error $(f_1-f^{\text{exact}}_1)/f^{\text{exact}}_1$ is a mere 2.5019e-10 but the error $(f_3-f^{\text{exact}}_3)/f^{\text{exact}}_3$ is 1.0000, i.e. zero correct digits.

To conclude our investigation of the HMM fluxes we consider the second material $A^{\varepsilon}_2(x)=1.1 + \frac{1}{2} \left( \cos 2 \pi x + \sin \frac{2 \pi x}{\varepsilon} \right)$ which features both fast oscillations and slowly varying parts. We will visualize the correction in the HMM procedure,
$$
	\frac{|\tilde{f}_i-f_i|}{\varepsilon^2}, \qquad 1 \leq i \leq 3,
$$
for the components 1, 2 and 3 corresponding to $Q(x)=x^i$, $1\leq i\leq 3$ in the macroscopic points $x=0$ and $x=0.3$. 
We also check the difference between the HMM corrected
flux and the flux of the effective equation (\ref{eq:num:eff}) where $\beta(x)$
was computed by freezing the slow variation in $A^\varepsilon_2$, i.e.
$$
	\frac{|\tilde{f}_1-\bar{A}(x)|}{\varepsilon^2}, \qquad
	\frac{|\tilde{f}_2|}{\varepsilon^2}, \qquad
	\frac{|\tilde{f}_3-6\varepsilon^2\beta(x)|}{\varepsilon^2}.
$$
We let $\varepsilon$ vary between $0.1$ and $0.001$ in finite steps. We scale $\eta$ and $\tau$ such that $\eta=\tau=20\varepsilon$. In this experiment we used 16 points per $\varepsilon$ for the the micro solver, e.g., $\rho^{\varepsilon}=16$. See Figs.~\ref{fig:flux1}, \ref{fig:flux2} and \ref{fig:flux3} for plots of the convergence behavior. Note that $\tilde{f}_0$ and $f_0$ are both zero and therefore not plotted.
\begin{figure}[h!t]
\begin{center}
	\includegraphics[width=0.5\textwidth]{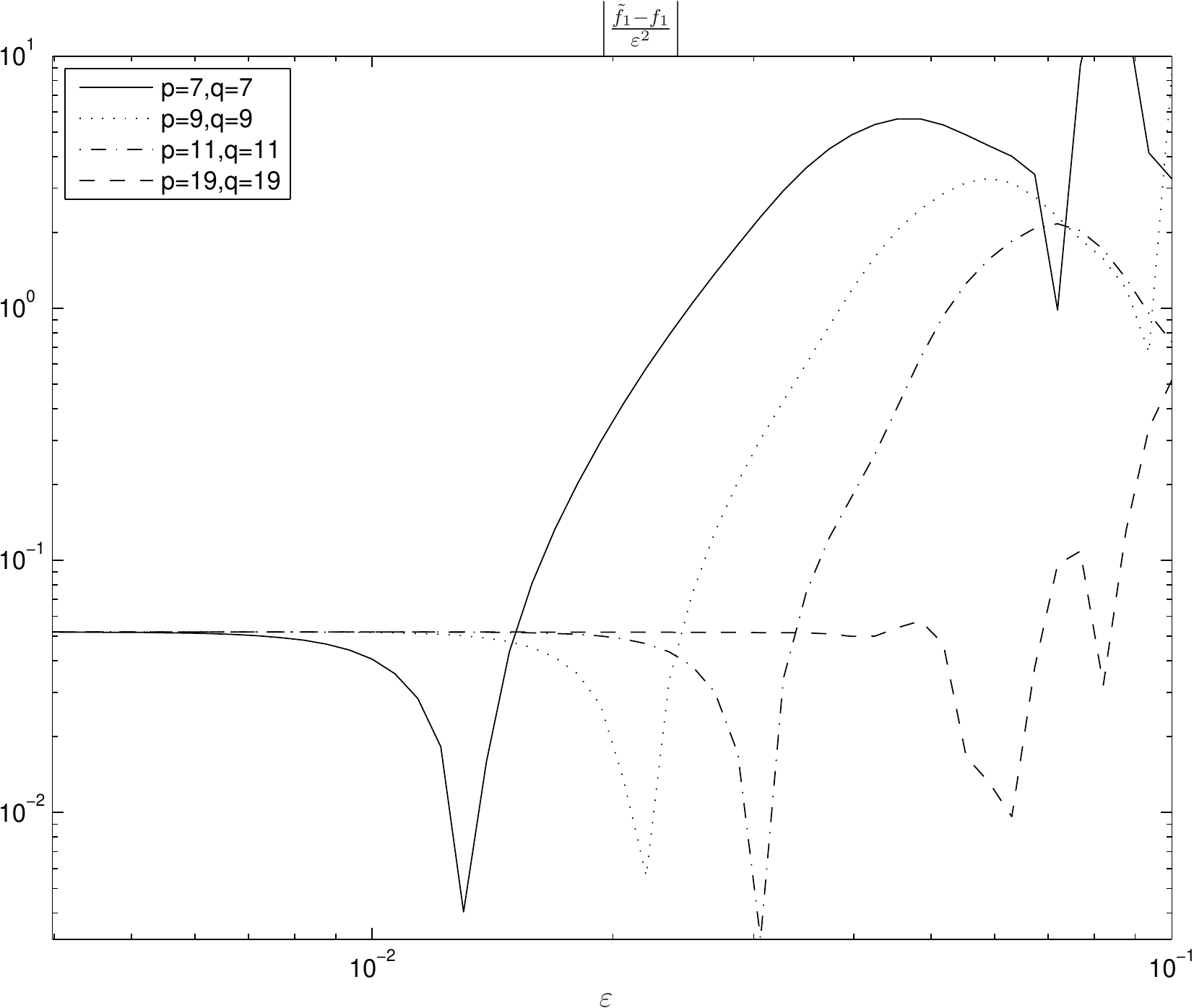}%
	\includegraphics[width=0.5\textwidth]{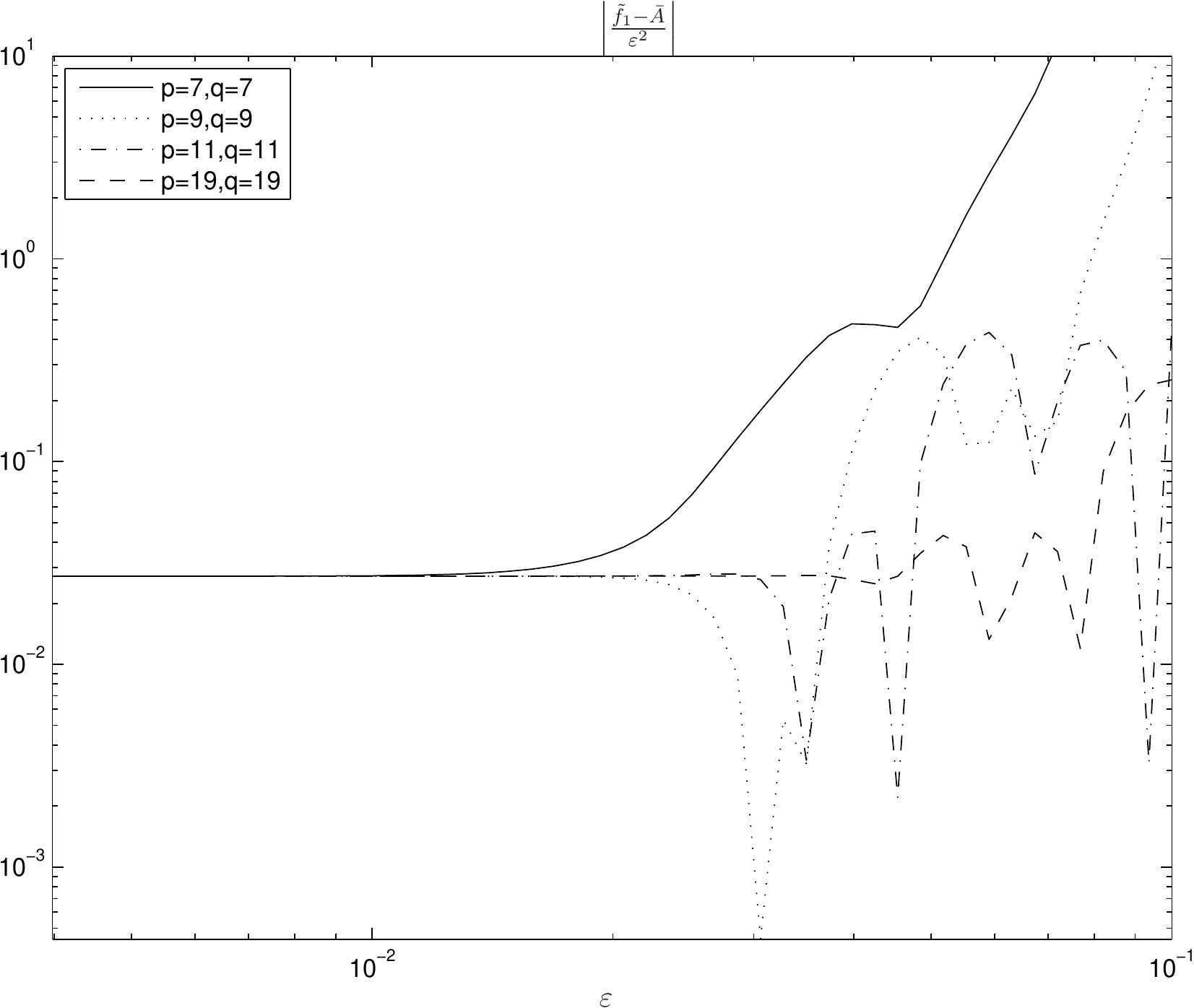}%
	\captionof{figure}{A log-log plot of the difference $\frac{|\tilde{f}_1-f_1|}{\varepsilon^2}$ (left) and $\frac{|\tilde{f}_1-\bar{A}|}{\varepsilon^2}$ (right) at $x=0$ for the material $A^{\varepsilon}_2$, 
	using kernels with various $p$- and $q$-parameters.}
	\label{fig:flux1}
\end{center}
\end{figure}
\begin{figure}[h!t]
\begin{center}
	\includegraphics[width=0.5\textwidth]{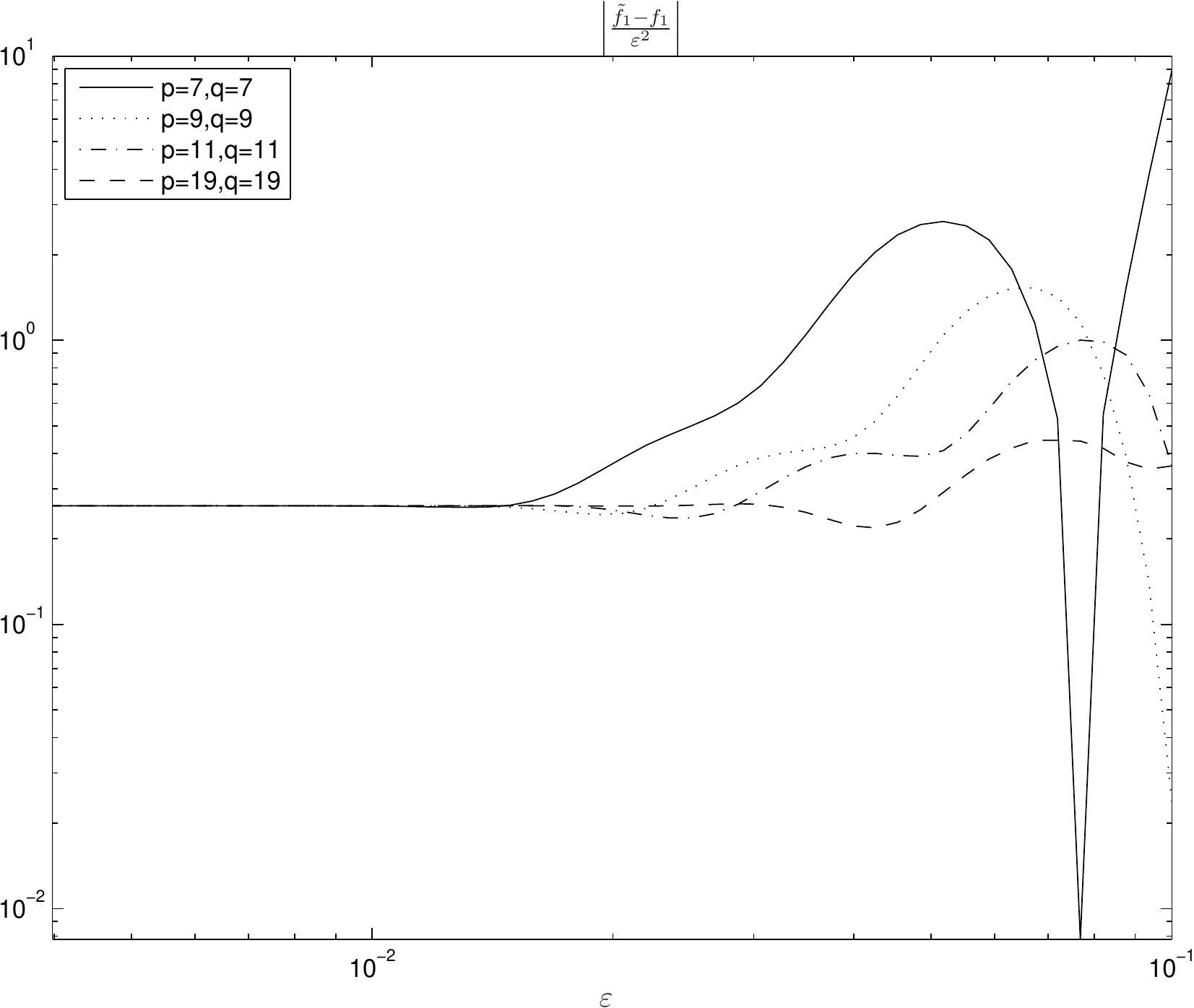}%
	\includegraphics[width=0.5\textwidth]{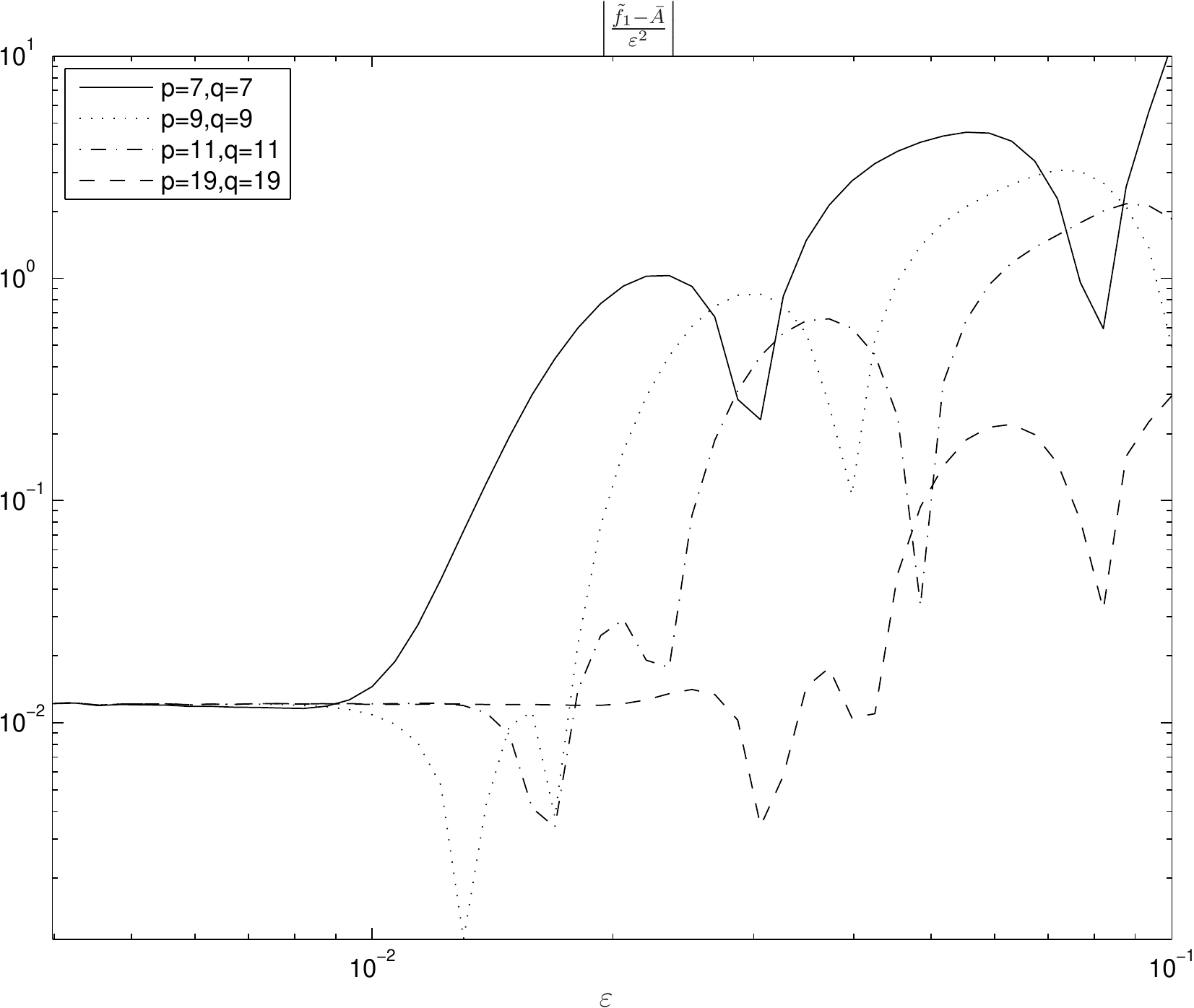}%
	\captionof{figure}{A log-log plot of the difference $\frac{|\tilde{f}_1-f_1|}{\varepsilon^2}$ (left) and $\frac{|\tilde{f}_1-\bar{A}|}{\varepsilon^2}$ (right) at $x=0.3$ for the material $A^{\varepsilon}_2$, 
	using kernels with various $p$- and $q$-parameters.}
	\label{fig:anotherflux1}
\end{center}
\end{figure}
\begin{figure}[h!t]
\begin{center}
	\includegraphics[width=0.5\textwidth]{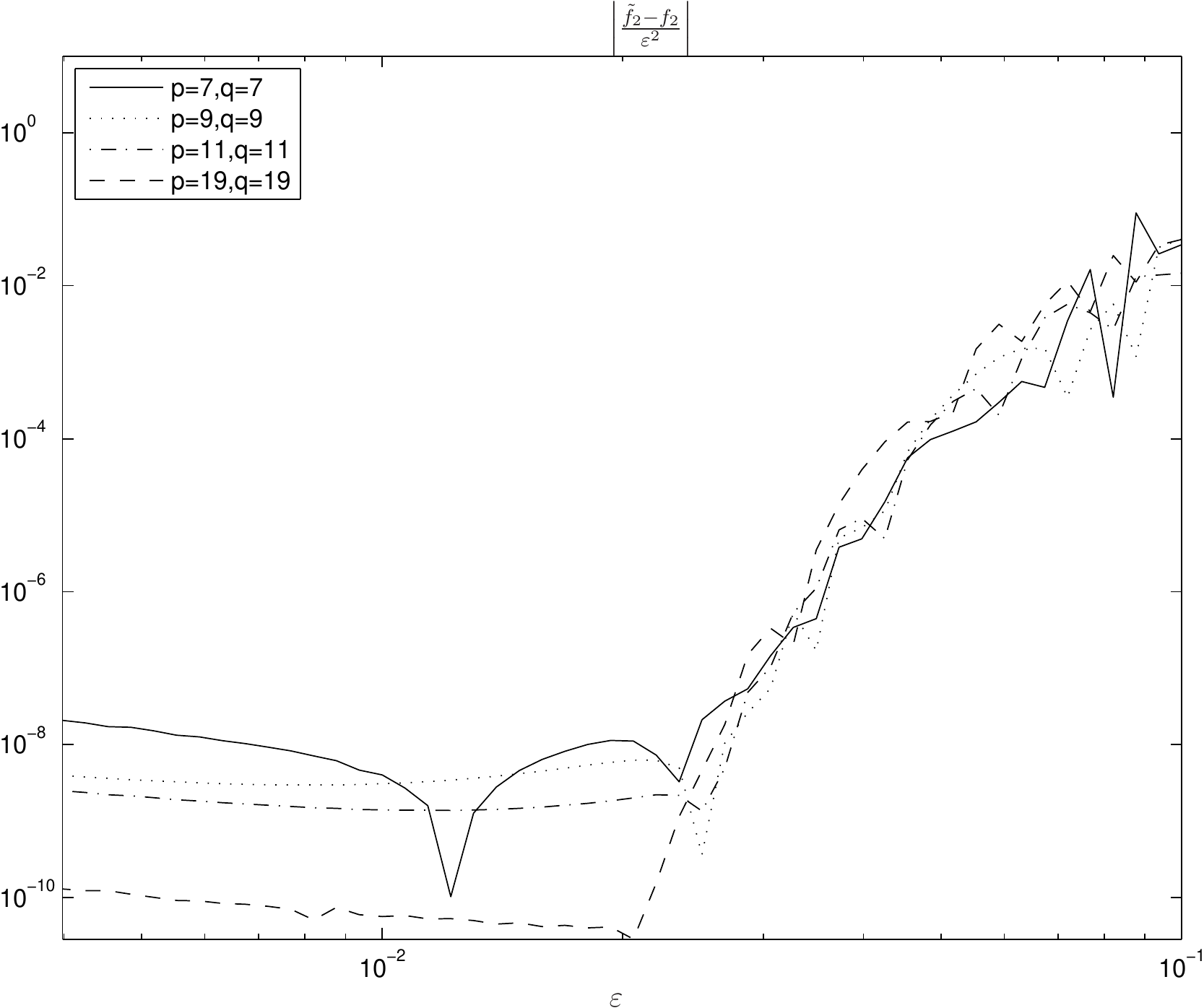}%
	\includegraphics[width=0.5\textwidth]{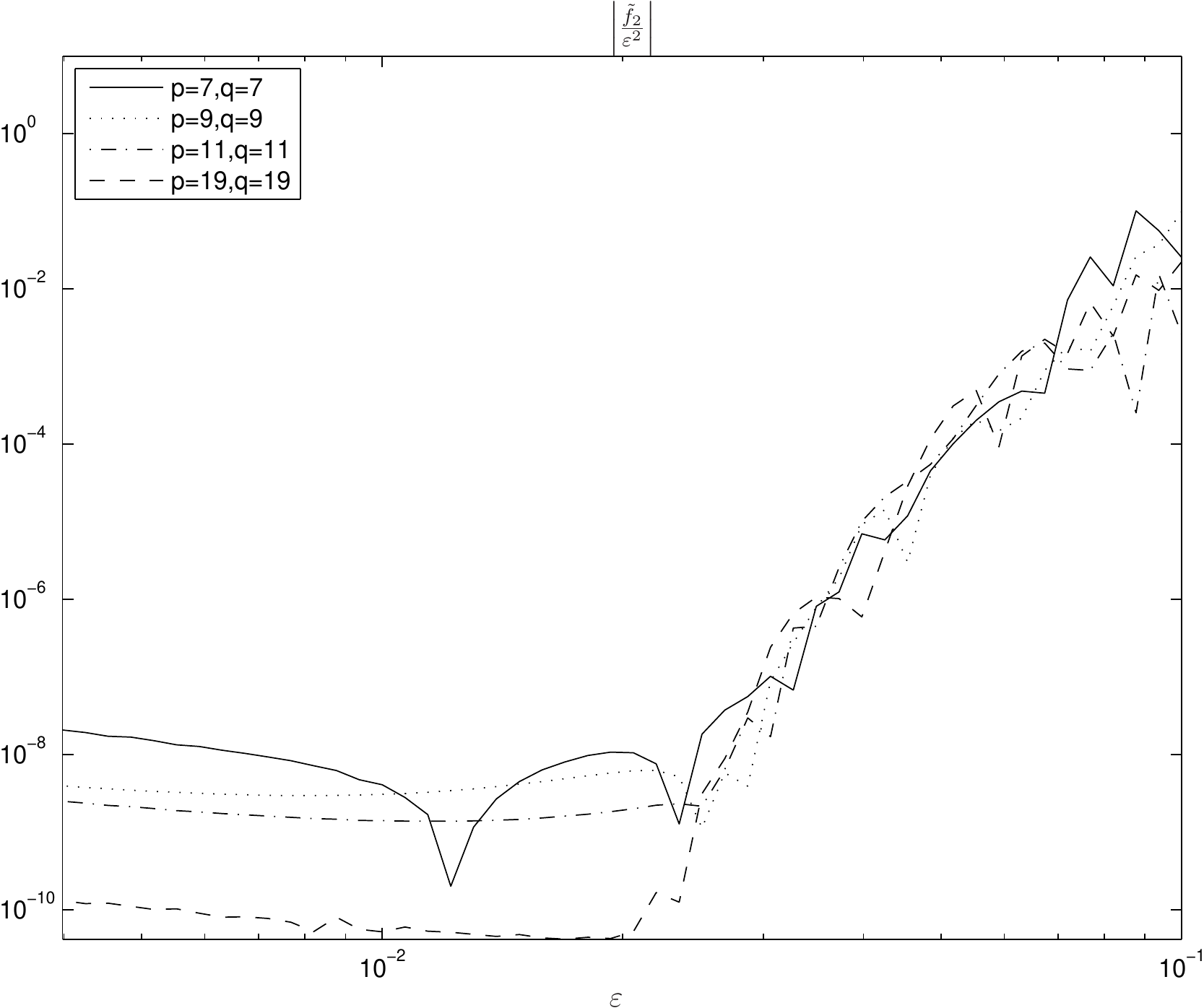}%
	\captionof{figure}{A log-log plot of the difference $\frac{|\tilde{f}_2-f_2|}{\varepsilon^2}$ (left) and $\frac{|\tilde{f}_2|}{\varepsilon^2}$ (right) at $x=0$ for the material $A^{\varepsilon}_2$,
		using kernels with various $p$- and $q$-parameters. Note that the effective equation flux is zero.}
	\label{fig:flux2}
\end{center}
\end{figure}
\begin{figure}[h!t]
\begin{center}
	\includegraphics[width=0.5\textwidth]{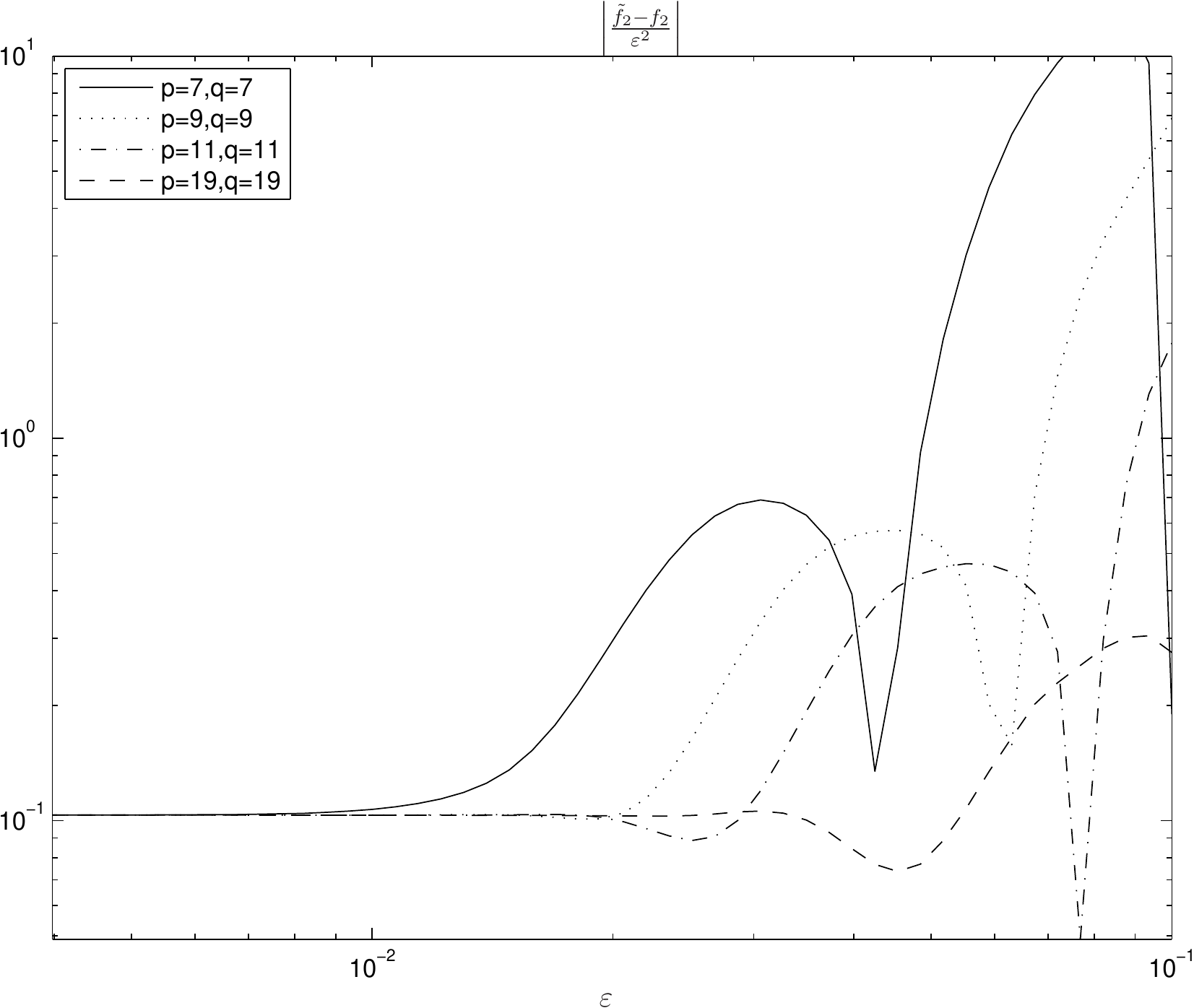}%
	\includegraphics[width=0.5\textwidth]{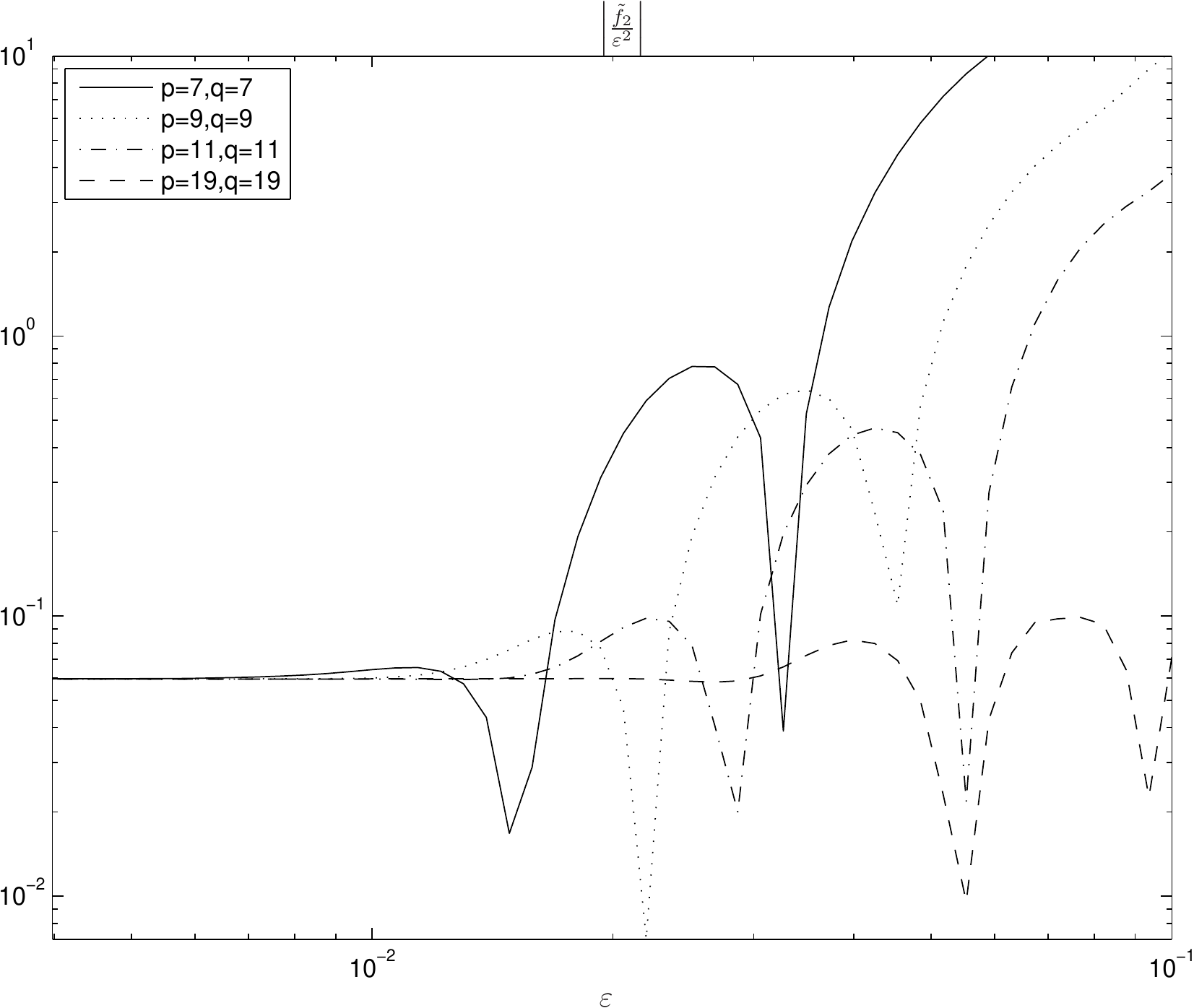}%
	\captionof{figure}{A log-log plot of the difference $\frac{|\tilde{f}_2-f_2|}{\varepsilon^2}$ (left) and $\frac{|\tilde{f}_2|}{\varepsilon^2}$ (right) at $x=0.3$ for the material $A^{\varepsilon}_2$,
		using kernels with various $p$- and $q$-parameters. Note that the effective equation flux is zero.}
	\label{fig:anotherflux2}
\end{center}
\end{figure}
\begin{figure}[h!t]
\begin{center}
	\includegraphics[width=0.5\textwidth]{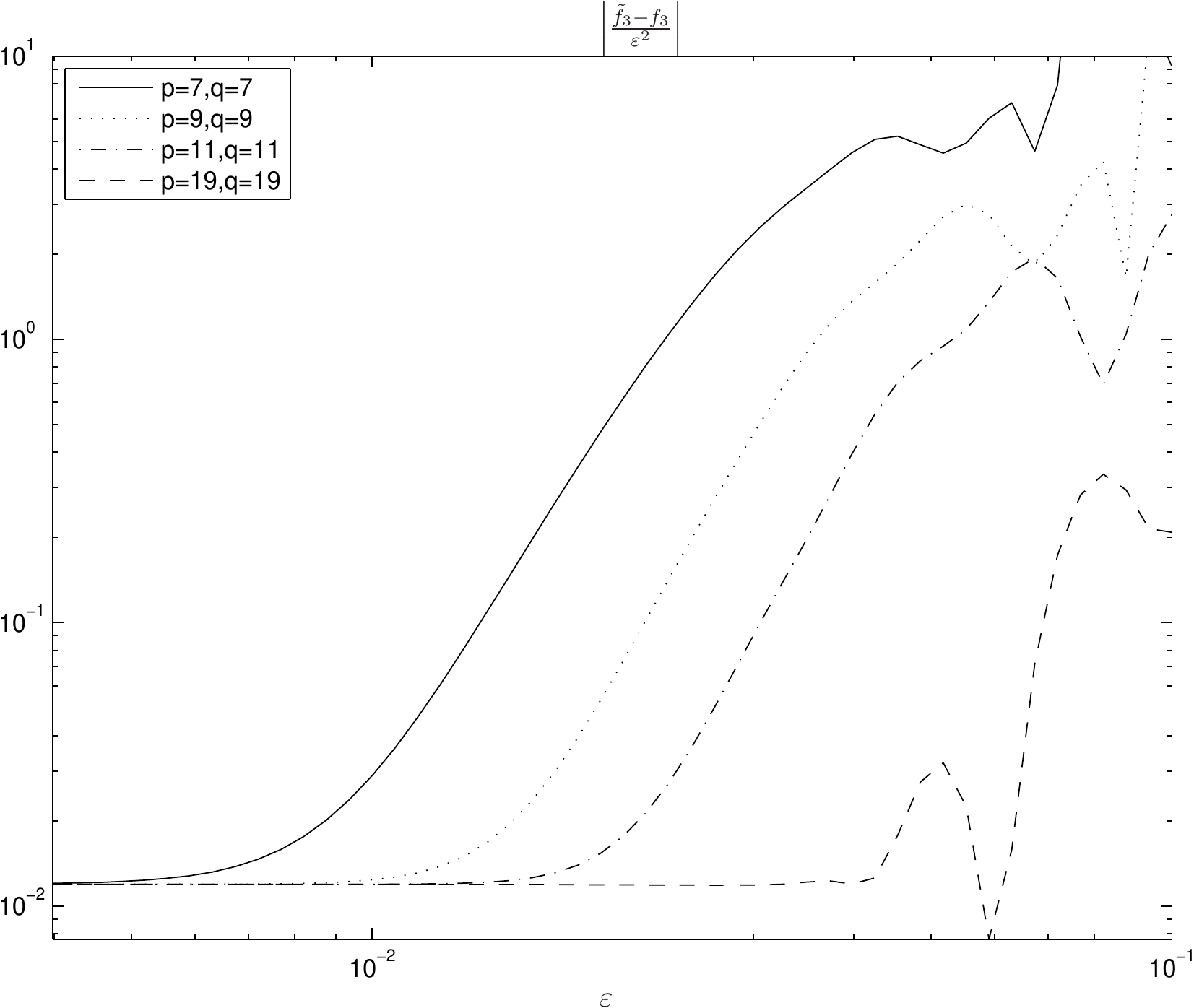}%
	\includegraphics[width=0.5\textwidth]{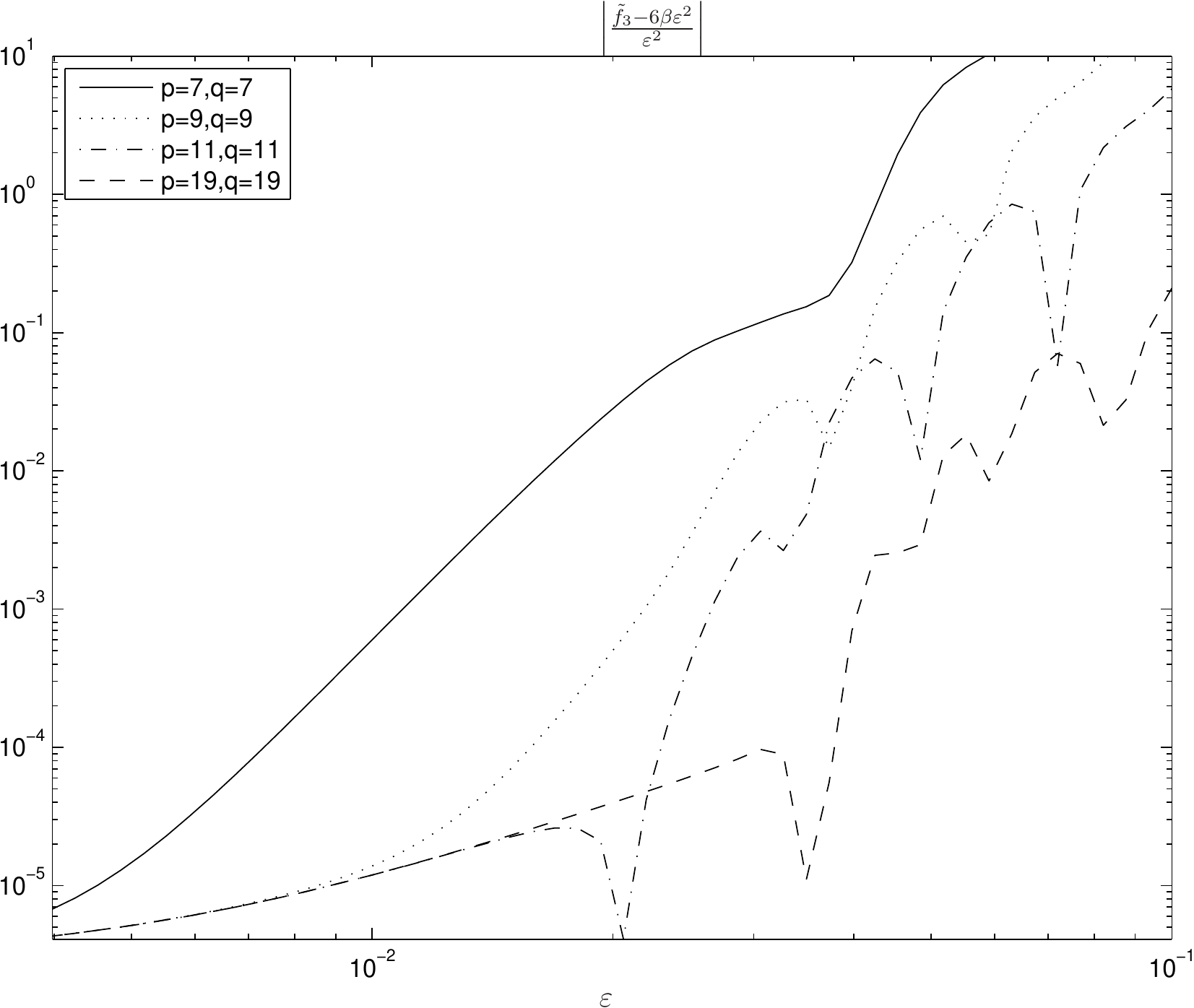}%
	\captionof{figure}{A log-log plot of the difference $\frac{|\tilde{f}_3-f_3|}{\varepsilon^2}$ (left) and $\frac{|\tilde{f}_3-6\beta\varepsilon^2|}{\varepsilon^2}$ (right) at $x=0$ for the material $A^{\varepsilon}_2$,
		using kernels with various $p$- and $q$-parameters.}
	\label{fig:flux3}
\end{center}
\end{figure}
\begin{figure}[h!t]
\begin{center}
	\includegraphics[width=0.5\textwidth]{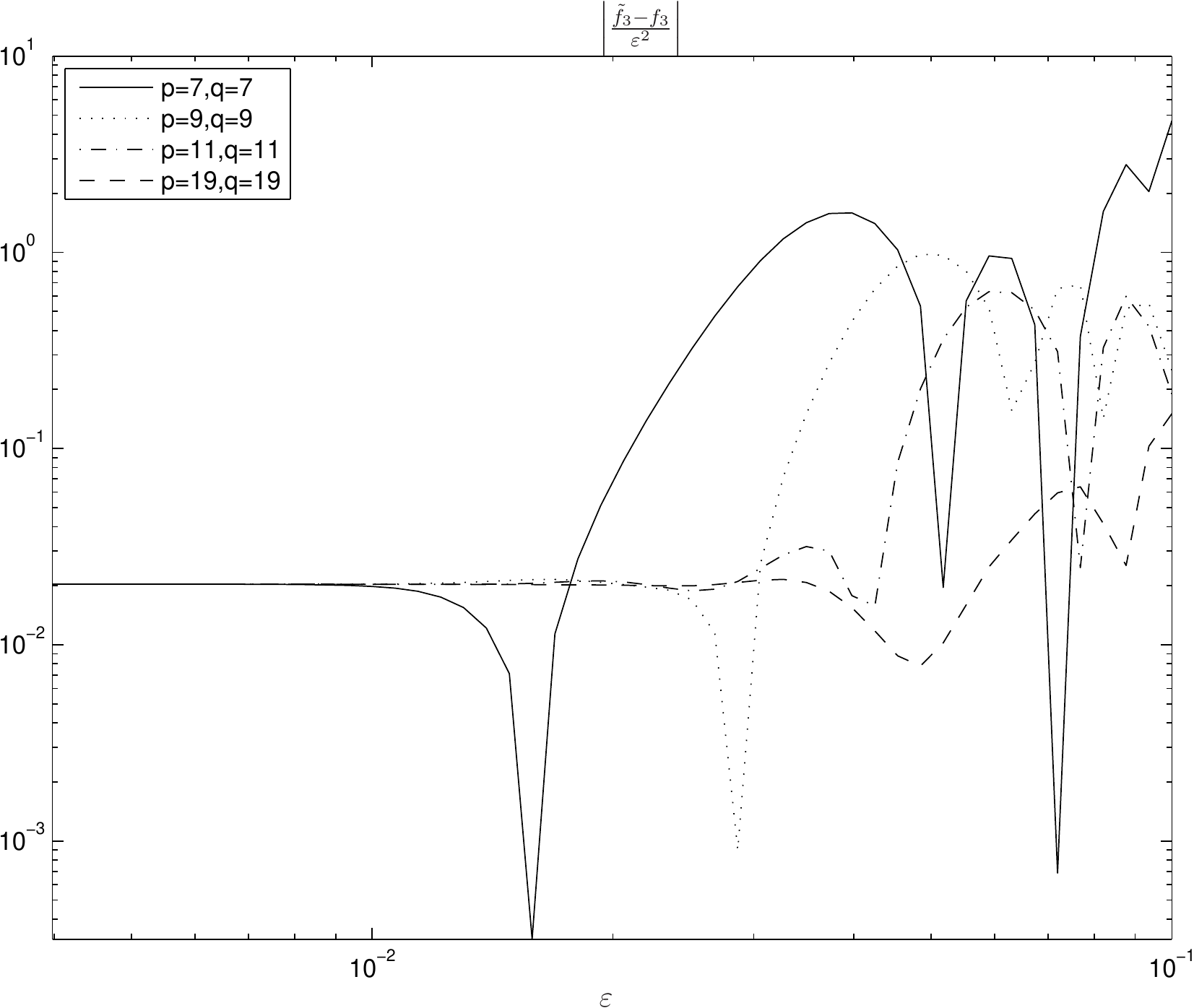}%
	\includegraphics[width=0.5\textwidth]{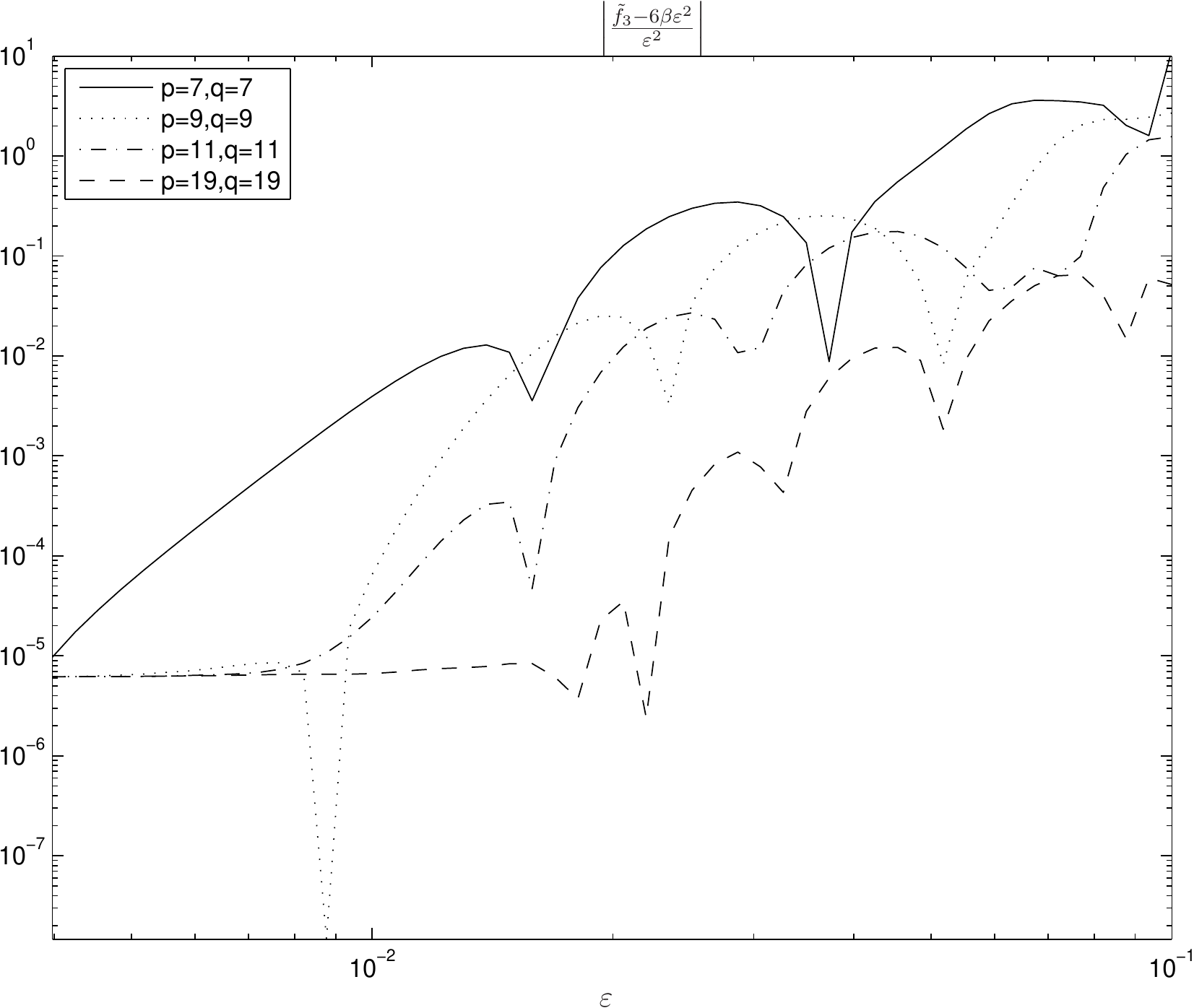}%
	\captionof{figure}{A log-log plot of the difference $\frac{|\tilde{f}_3-f_3|}{\varepsilon^2}$ (left) and $\frac{|\tilde{f}_3-6\beta\varepsilon^2|}{\varepsilon^2}$ (right) at $x=0.3$ for the material $A^{\varepsilon}_2$,
		using kernels with various $p$- and $q$-parameters.}
	\label{fig:anotherflux3}
\end{center}
\end{figure}

We note that as $\varepsilon\to 0$, compared to the
effective flux there is an $\varepsilon^2$-difference in the
components $\tilde{f}_1$ and, when $x=0.3$, also to $\tilde{f}_2$.
There is no difference in $\tilde{f}_3$ and, when $x=0$, no difference in $\tilde{f}_2$.
This gives
us a clue to the form of the macroscopic flux in the case of a
slowly variable coefficients $A(x,x/\varepsilon)$. 
We consider three possible candidates,
\begin{align*}
	F_{\rm alt 1} &= \bar{A}(x) u_{x} + \varepsilon^2 \beta(x) u_{xxx},\\
	F_{\rm alt 2} &= \bar{A}(x) u_{x} + \varepsilon^2 \partial_x\beta(x) u_{xx},\\
	F_{\rm alt 3} &= \bar{A}(x) u_{x} + \varepsilon^2 \partial_{xx}\beta(x) u_{x},
\end{align*}
where $F_{\rm alt 1}$ is the effective equation used above. When
$u=x-x'$ where $x'$ is the macroscopic point of evaluation we get
\begin{align*}
	F_{\rm alt 1}(x') &= \bar{A}(x'),\\
	F_{\rm alt 2}(x') &= \bar{A}(x'),\\
	F_{\rm alt 3}(x') &= \bar{A}(x') + \varepsilon^2\beta''(x').
\end{align*}
Since we have a difference between $\tilde{f}_1$ and $F_{\rm alt 1}(x')=\bar{A}(x')$
of size $\varepsilon^2$ it would indicate that the last alternative
$F_{\rm alt 3}$ is correct. Moreover, when $u=(x-x')^2$, we get
\begin{align*}
	F_{\rm alt 1}(x') &= 0,\\
	F_{\rm alt 2}(x') &= 2\varepsilon^2\beta'(x'),\\
	F_{\rm alt 3}(x') &= 4\varepsilon^2\beta'(x').
\end{align*}
The $\varepsilon^2$-difference in $\tilde{f}_2$ at $x'=0.3$ but not
at $x'=0$ is consistent with both $F_{\rm alt 2}$ and $F_{\rm alt 3}$,
since $\beta'(0)=0$ but $\beta'(0.3)\neq 0$, cf. Fig 
\ref{fig:material2}. Finally, when $u=(x-x')^3$
then all three alternatives have the same value of $6\varepsilon^2\beta(x')$
and the zero difference in $\tilde{f}_3$
is thus consistent with all of them.

These results then lead us to tentatively conjecture the form
of the flux $F$ for the effective equation when the oscillatory coefficient
has a slowly varying part:
$$
	F = \bar{A}(x)u_{x} + \varepsilon^2 \partial_{xx}\beta(x) u_{x}.
$$
This would be the flux that our HMM procedure approximates. It differs
from the effective equation (\ref{eq:num:eff}). Still the numerical
results in Fig. \ref{fig:sol2} shows that HMM and the effective
equation are very close in this case. The $\varepsilon^2$-differences in 
the $\tilde{f}_1$
and $\tilde{f}_2$ components seem to have very little effect on the solution
over the time interval considered. On the other hand, the addition
of the $\tilde{f}_3$-component, which is also of size $\varepsilon^2$, 
has a dramatic effect. Not adding it corresponds to solving
the homogenized equation (\ref{eq:num:hom}), which is far off
the other solutions. We conjecture that for simulations over longer time the
the differences in the $\tilde{f}_1$
and $\tilde{f}_2$ components will play a bigger role.


%
%

\section{Conclusions} \label{section:conclusions}

We have developed and analyzed numerical methods for multiscale wave equations with oscillatory coefficients. The methods are based on the framework of the heterogeneous multiscale method (HMM) and have substantially lower computational complexity than standard discretization algorithms. Convergence is proven in \cite{engquist2011} for finite time approximation in the case of periodic coefficients and for multiple dimensions. The effective equation for long time is different from the finite time homogenized equation. After long time, dispersive effects enter and the method has to capture additional effects on the order of $\mathcal{O}(\varepsilon^2)$~\cite{santosa1991}. Numerical experiments show that the new techniques both accurately and efficiently captures the macroscopic behavior for both finite and long time. It is emphasized that the HMM approach with just minor modifications accurately captures these dispersive phenomena. We prove that our method is stable if the spatial grid in the macro solver is sufficiently coarse.


\bibliographystyle{plain}
\bibliography{HMMLTP}


\end{document}